\documentclass[leqno,11pt]{article}

\usepackage{amsmath}
\usepackage{amssymb}
\usepackage{amsthm}
\usepackage{enumerate}
\usepackage{cases}
\usepackage{stmaryrd}
\usepackage{color}
\usepackage{hyperref}

\usepackage[top=1 in, bottom=1 in, left=1 in, right=1 in]{geometry}

\numberwithin{equation}{section} 
\newcommand\numberthis{\stepcounter{equation}\tag{\theequation}} % numbering the equation in align*

% auto-numbering constants
\newcounter{constantno}
\newcommand{\constantnumber}[1]{\refstepcounter{constantno}\label{#1}}

\newtheorem{lemma}{Lemma}[section]
\newtheorem{theorem}[lemma]{Theorem}
\newtheorem{proposition}[lemma]{Proposition}
\newtheorem{corollary}[lemma]{Corollary}

\newtheorem{remark}[lemma]{Remark}

\newenvironment{customthm}[1]
  {\innercustomthm}
  {\endinnercustomthm}

\theoremstyle{definition}
\newtheorem{definition}[lemma]{Definition}

\newcommand{\ba}{\bar{\alpha}}  
\newcommand{\bb}{\bar{\beta}}
\newcommand{\bg}{\bar{\gamma}}
\newcommand{\bl}{\bar{\lambda}}
\newcommand{\bm}{\bar{\mu}}

\newcommand{\br}{\bar{\rho}}

\newcommand{\la}{\langle}
\newcommand{\ra}{\rangle}

\begin{document}

\title{Convergence of Closed Pseudo-Hermitian Manifolds\footnotetext{\textbf{Keywords}: Convergence, Pseudo-Einstein, Pseudo-Hermitian Ricci Curvature, Bochner Formulae, CR Sobolev inequality, Sasakian Manifold}\footnote{\textbf{MSC 2010}: 53C25, 32V05}}

\author{Shu-Cheng Chang\footnote{Supported in part by the MOST of Taiwan} \and Yuxin Dong\footnote{Supported by NSFC grant No. 11771087, and LMNS, Fudan.} \and Yibin Ren}

\maketitle

\begin{abstract}
	Based on uniform CR Sobolev inequality and Moser iteration, this paper investigates the convergence of closed pseudo-Hermitian manifolds. 
	In terms of the subelliptic inequality, the set of closed normalized pseudo-Einstein manifolds with some uniform geometric conditions is compact. 
	Moreover, the set of closed normalized Sasakian $\eta$-Einstein $(2n+1)$-manifolds with Carnot-Carath\'eodory distance bounded from above, volume bounded from below and $L^{n + \frac{1}{2}}$ norm of pseudo-Hermitian curvature bounded is $C^\infty$ compact. As an application, we will deduce some pointed convergence of complete K\"ahler cones with Sasakian manifolds as their links.
\end{abstract}

\section{Introduction}

Cheeger-Gromov compactness theorem says that the class of closed Riemannian $n$-manifold with sectional curvature $|\mbox{Sec}|\leq \Lambda$, volume $\mbox{Vol} \geq V_1$ and diameter $\mbox{diam} \leq d$ is precompact in $C^{1, \alpha}$ topology for any $ \alpha \in (0,1)$. 
% It leads many important applications, such as Cheeger's finiteness theorem \cite{cheeger1970finiteness} and some sphere theorems \cite{cheeger1997ricci}.
Gao \cite{gao1990einstein} studied the compactness of one canonical metric -- Einstein metric, and obtained that the class of normalized Einstein $4$-manifolds with injective radius $inj \geq i_0$ and $diam \leq D$ is compact in $C^\infty$ topology.
Anderson \cite{anderson1989ricci} extended Gao's theorem for higher dimension under a weaker condition -- volume $\mbox{Vol} \geq V_1$.
There are many other compactness theorems under various geometric assumptions which lead important results (cf. \cite{anderson1990convergence,anderson1992compactness,cheeger1970finiteness,cheeger1997ricci,fukaya2006metruc,gromov181metric,petersen2006riemannian,petersen1997relative,tian1992compact}).

One generalization of Gromov precompactness theorem is to study the convergence of various geometric structures.
In sub-Riemannian geometry, Baudoin, Bonnefont, Garofalo and Munive \cite{baudoin2014volume} have showed the Gromov-Hausdorff precompactness theorem of closed sub-Riemannian manifolds which satisfy the so-called curvature-dimension inequality and have bounded (sub-Riemannian) diameter. 
In pseudo-Hermitian geometry, 
Chang, Chang, Han and Tie \cite{chang2017pseudo} deduced the CR volume doubling property in pseudo-Hermitian manifolds under the uniformly conditions of pseudo-Hermitian Ricci curvature and pseudo-Hermitian torsion, which also leads the Gromov-Hausdorff precompactness theorem of pseudo-Hermitian manifolds.
For more general case, one refers to \cite{villani2008optimal}.

Motivated by these results, this paper studies the regularity convergence of closed pseudo-Hermitian manifolds. 
Note that a pseudo-Hermitian manifold is determined by a contact form $\theta$ and an almost complex structure $J$ which lies in the horizontal bundle $HM$ given by $\theta$ and can be canonically extended to tangent bundle, also denoted by $J$.
A sequence of closed pseudo-Hermitian manifolds $(M_i, H M_i, J_i, \theta_i)$ is called $C^{k, \alpha}$ convergent if there are a manifold $M$, two tensors $\theta \in C^{k,\alpha} (TM , \mathbb{R}), J \in C^{k, \alpha} (TM, TM)$ and diffeomorphisms $\phi_i : M \to M_i$ such that $ \phi_i^* \theta_i \to \theta $ and $ \phi_i^* J_i \to J $ in $C^{k,\alpha}$ topology.
% One aspect of pseudo-Hermitian geometry is to understand the roles of horizontal informations.
The class of closed pseudo-Hermitian manifolds with uniform conditions of Carnot-Carath\'eodory diameters, volumes and higher-order horizontal derivatives of pseudo-Hermitian curvatures and pseudo-Hermitian torsions is compact in $C^\infty$ topology. 
The pseudo-Einstein condition leads some subelliptic inequalities of pseudo-Hermitian curvature which weaken the higher-order derivative condition of pseudo-Hermitian curvature as follows:

\begin{customthm}{\ref{c-smcptpseudothm}}
	Given constants $\kappa_1, \kappa_2, d, V_1, \lambda, \Lambda$ and $q > Q$ where $Q$ is given in \eqref{b-crsobolev}, any sequence of closed connected pseudo-Einstein manifolds with dimension $2n+1 \geq 5$ and
	\begin{align*}
		|A| \leq \kappa_1, |div A| \leq \kappa_2, ||A||_{S^{\frac{q}{2}}_{2k+4}} \leq \lambda, ||\tilde{R}||_{\frac{q}{2}} \leq \Lambda, \mbox{diam}_{cc} \leq d, \mbox{Vol} \geq V_1
	\end{align*}
	is $C^{k + 1, \alpha}$ sub-convergent for any $\alpha \in (0,1)$, where $A$ is the pseudo-Hermitian torsion, $\tilde{R}$ is pseudo-Hermitian curvature, $\mbox{diam}_{cc}$ is the Carnot-Carath\'eodory diameter and $\mbox{Vol}$ is the volume.
\end{customthm}

\noindent It is remarkable that the pseudo-Hermitian scalar curvature may not be constant in this theorem and $Q > 2n +2$. 
Moreover, although the condition of the bounds of $|A|, |div A|$ and pseudo-Einstein condition won't imply lower bound of Riemannian Ricci curvature, they give CR Sobolev inequality due to \cite{chang2017pseudo} which is an important ingredient in geometric analysis.
% The upper bound of volume is due to the lack of volume comparison theorem in pseudo-Hermitian geometry. 
When the real dimension is 5, the convergence theorem needs less derivatives of pseudo-Hermitian torsion due to a special Bochner formula of pseudo-Hermitian torsion (see Theorem \ref{c-pseudocptness}). 
A pseudo-Hermitian manifold with vanishing pseudo-Hermitian torsion is Sasakian. 
Sasakian geometry is an important branch of contact geometry and gets much attention due to its role in String theory and K\"ahler geometry.
In the last two section of this paper, 
$C^\infty$ version of Theorem \ref{c-smcptpseudothm} for closed Sasakian pseudo-Einstein $(2n+1)$-manifolds will be improved to $q = 2n +1$ and the pseudo-Einstein condition will be slightly relaxed. 
In particular, any sequence of closed Sasakian manifolds with pseudo-Hermitian Ricci curvature bounded from below will have some $C^{1, \alpha}$ sub-convergence.
As an application, we will deduce some pointed convergence of complete K\"ahler cones whose links are Sasakian manifolds (see Corollary \ref{d-corollary-cone} and Corollary \ref{c-corollary-cone}).

% This is strange because the sub-Riemannian homogeneous dimension is $2n+2$ at this time.

% To our best knowledge, the uniformly embedding from Folland-Stein space to H\"older space is still unknown under the conditions \eqref{a-main-1} and pseudo-Einstein condition.

% These manifolds enjoy the same subelliptic inequalities of pseudo-Hermitian torsion and the curvature associated with Tanaka-Webster connection.
% By using uniformly CR Sobolev inequality (cf. \cite{chang2017pseudo}), they can estimate the derivatives of pseudo-Hermitian torsion and curvature and thus give compactness theorems.

% Let us briefly describe the schedule of this paper. 
% Section \ref{sec-basic} will introduce the necessary information of pseudo-Hermitian geometry and deduce the relationship of various Ricci tensor. 
% Section \ref{sec-lowerregularity} will derive $C^{1}$ compactness theorem of closed pseudo-Einstein manifolds and Section \ref{sec-higherregularity} will obtain the $C^{\infty}$ compactness theorem. 
% In Section \ref{sec-pinching}, we will discuss $C^1$ compactness theorem of closed pseudo-Hermitian manifolds with uniformly lower bound of pseudo-Hermitian Ricci curvature under the assumption that pseudo-Hermitian torsions are sufficiently small.
% At the end, Section \ref{sec-sasakian} shows the compactness theorem of closed Sasakian pseudo-Einstein manifolds.

\section{Pseudo-Hermitian Geometry and CR Bochner Formulae} \label{sec-basic}
Let's briefly review the pseudo-Hermitian geometry. For details, the readers could refer to \cite{boyer2008sasakian,dragomir2006cr,lee1988psuedo,webster1978pseudo}. A smooth manifold $M$ of real dimension ($2n+1$) is said to be a CR manifold if
there exists a smooth rank $n$ complex subbundle $T_{1,0} M \subset TM \otimes \mathbb{C}$ such that
\begin{gather}
T_{1,0} M \cap T_{0,1} M =0 \\
[\Gamma (T_{1,0} M), \Gamma (T_{1,0} M)] \subset \Gamma (T_{1,0} M) \label{a-integrable}
\end{gather}
where $T_{0,1} M = \overline{T_{1,0} M}$ is the complex conjugate of $T_{1,0} M$.
Equivalently, the CR structure may also be described by the real subbundle $HM = Re \: \{ T_{1,0} M \oplus T_{0,1}M \}$ of $TM$ which carries an almost complex structure $J : HM \rightarrow HM$ defined by $J (X+\overline{X})= i (X-\overline{X})$ for any $X \in T_{1,0} M$.
Since $HM$ is naturally oriented by the almost complex structure $J$, then $M$ is orientable if and only if
there exists a global nowhere vanishing 1-form $\theta$ such that $ HM = Ker (\theta) $.
Any such section $\theta$ is referred to as a pseudo-Hermitian structure on $M$. The Levi form $L_\theta $ of a given pseudo-Hermitian structure is defined by
$$L_\theta (X, Y ) = d \theta (X, J Y)  $$
for any $Z , W \in HM$. 
An orientable CR manifold $(M, HM, J)$ is called strictly pseudo-convex if $L_\theta$ is positive definite for some $\theta$. It is remarkable that the signature of the Levi form is invariant under the CR conformal transformation $\tilde{\theta} = e^{2 u} \theta$. 

When $(M, HM, J)$ is strictly pseudo-convex, there exists a pseudo-Hermitian structure $\theta$ such that $L_\theta$ is positive. The quadruple $( M, HM J, \theta )$ is called a pseudo-Hermitian manifold. This paper is discussed in these pseudo-Hermitian manifolds.

For a pseudo-Hermitian manifold $(M, HM, J, \theta)$, there exists a unique nowhere zero vector field $\xi$, called Reeb vector field, transverse to $HM$ satisfying
$\xi \lrcorner \: \theta =1, \ \xi \lrcorner \: d \theta =0$. There is a decomposition of the tangent bundle $TM$: 
\begin{align}
TM = HM \oplus \mathbb{R} \xi \label{a-webstermetric}
\end{align}
which induces the projection $\pi_H : TM \to HM$. Set $G_\theta = \pi_H^* L_\theta$. Since $L_\theta$ is a metric on $HM$, it is natural to define a Riemannian metric
\begin{align}
g_\theta = G_\theta + \theta \otimes \theta
\end{align}
which makes $HM$ and $\mathbb{R} \xi$ orthogonal. Such metric $g_\theta$ is called Webster metric.
In the terminology of foliation geometry, $\mathbb{R} \xi$ provides a one-dimensional Reeb foliation and $HM$ is its horizontal distribution.
By requiring that $J \xi=0$, the almost complex structure $J$ can be extended to an endomorphism of $TM$.
The integrable condition \eqref{a-integrable} guarantees that $g_\theta$ is $J$-invariant.
Clearly $\theta \wedge (d \theta)^n$ differs a constant with the volume form of $g_\theta$.
Henceforth we will regard it as the volume form and always omit it for simplicity.

On a pseudo-Hermitian manifold, there exists a canonical connection preserving the CR structure and the Webster metric.
\begin{proposition} [\cite{tanaka1975differential,webster1978pseudo}] \label{b-tanakawebster}
	Let $(M, HM, J, \theta)$ be a pseudo-Hermitian manifold. Then there is a unique linear connection $\nabla$ on $M$ (called the Tanaka-Webster connection) such that:
	\begin{enumerate}[(1)]
	\item The horizontal bundle $HM$ is parallel with respect to $\nabla$;
	\item $\nabla J=0$, $\nabla g_\theta=0$;
	\item \label{a-tor1} The torsion $T_\nabla$ of the connection $\nabla$ is pure, that is, for any $X, Y \in HM$, \label{b-twtorsion}
	$$
	T_{\nabla} (X, Y)= 2 d \theta (X, Y) \xi  \mbox{ and } T_{\nabla} (\xi, J X) + J T_{\nabla} (\xi, X) =0.
	$$
	\end{enumerate}
\end{proposition}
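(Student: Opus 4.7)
The plan is to prove uniqueness by a Koszul-type difference argument and produce existence via an explicit construction in an admissible local frame using Cartan's structure equations. For uniqueness, I suppose $\nabla^{1}$ and $\nabla^{2}$ both satisfy (1)--(3) and set $S(X,Y) := \nabla^{1}_{X}Y - \nabla^{2}_{X}Y$. Condition (1) together with $\nabla g_\theta = 0$ immediately forces $\nabla_X \xi = 0$ for each connection, so $S(\cdot,\xi) = 0$; condition (2) makes $S_X$ skew-symmetric with respect to $g_\theta$ and commute with $J$; and the horizontal part of (3) forces $S(X,Y) = S(Y,X)$ on $HM \times HM$, since $2\,d\theta(X,Y)\xi$ is common to both torsions. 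The standard Koszul cyclic trick applied to $g_\theta(S(X,Y),Z)$ on $HM$, combining symmetry in the first two slots with skew-symmetry in the last two, will yield $S \equiv 0$ on $HM \times HM$. Finally the $\xi$-part of (3) gives $J S_\xi + S_\xi J = 0$, which combined with $[S_\xi, J] = 0$ forces $S_\xi = 0$.

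For existence, I choose a local frame $\{T_\alpha\}$ of $T_{1,0}M$ with dual coframe $\{\theta^\alpha\}$, so that $d\theta = i h_{\alpha\bar\beta}\,\theta^\alpha \wedge \theta^{\bar\beta}$ for some Hermitian matrix $h_{\alpha\bar\beta}$. I seek connection $1$-forms $\omega_\alpha^\beta$ and torsion forms $\tau^\alpha$ satisfying
\begin{equation*}
d\theta^\alpha = \theta^\beta \wedge \omega_\beta^\alpha + \theta \wedge \tau^\alpha, \qquad dh_{\alpha\bar\beta} = \omega_\alpha^\gamma h_{\gamma\bar\beta} + h_{\alpha\bar\gamma}\,\omega_{\bar\beta}^{\bar\gamma}, \qquad \tau^\alpha \wedge \theta^\alpha = 0.
\end{equation*}
The first two equations, modulo the undetermined $\theta$-component of $\omega$, will be solved by the standard algebraic procedure used for the Chern connection on a Hermitian manifold. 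Writing $\tau^\alpha = A^\alpha_{\bar\beta}\theta^{\bar\beta} + B^\alpha_\beta\theta^\beta$, differentiating $d\theta$ and using $d^2\theta = 0$ produces a relation which, via the Cartan lemma, forces $B^\alpha_\beta = 0$ and the symmetry $A_{\alpha\beta} = A_{\beta\alpha}$; this also pins down the remaining $\theta$-freedom in $\omega$. Defining $\nabla T_\alpha := \omega_\alpha^\beta \otimes T_\beta$, extending by complex conjugation to $T_{0,1}M$, and declaring $\nabla \xi := 0$ then yields a connection on $TM$ with all required properties.

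The main technical point is verifying the second half of (3), namely $T_\nabla(\xi, JX) + J T_\nabla(\xi, X) = 0$; in the admissible frame this is precisely the symmetry $A_{\alpha\beta} = A_{\beta\alpha}$, which is the nontrivial consequence of Cartan's lemma above. The remaining conditions -- parallelism of $HM$, compatibility with $g_\theta$ and $J$, and the horizontal part of the torsion -- are encoded directly in the three defining equations and so amount only to bookkeeping.
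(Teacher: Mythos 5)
The paper itself offers no proof of this proposition: it is quoted from Tanaka and Webster, so your argument can only be compared with the classical constructions. Your uniqueness half is correct and complete in outline: parallelism of $HM$ together with $\nabla g_\theta = 0$ does force $\nabla \xi = 0$, the difference tensor $S$ is then $HM$-valued and symmetric on $HM \times HM$ by the first torsion condition, skew in its last two slots by metric compatibility, so the standard six-step cycling kills it; and the $\xi$-slot is correctly dispatched by playing $S_\xi J + J S_\xi = 0$ against $[S_\xi, J] = 0$ and the injectivity of $J$ on $HM$. This is essentially Tanaka's invariant argument.

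The existence half follows Webster's coframe construction, but it contains a genuine misstep. The second purity condition $T_\nabla(\xi, JX) + J T_\nabla(\xi, X) = 0$ says that $\tau = T_\nabla(\xi, \cdot)$ anticommutes with $J$, i.e.\ interchanges $T_{1,0}M$ and $T_{0,1}M$; in your frame this is exactly the vanishing $B^\alpha_\beta = 0$, \emph{not} the symmetry $A_{\alpha\beta} = A_{\beta\alpha}$. The symmetry of $A$ is a derived consequence (the paper records it immediately after the proposition), not the content of condition (3). Moreover, $d^2\theta = 0$ does not force $B = 0$: after the connection terms cancel via the metric-compatibility equation, what remains yields only that $A_{\alpha\beta}$ is symmetric and that $B_{\alpha\bar\beta} + B_{\bar\beta\alpha} = 0$, i.e.\ $B$ is anti-Hermitian. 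The vanishing of $B$ must instead be imposed as the normalization $\tau^\alpha \equiv 0 \bmod \theta^{\bar\beta}$, which is achievable precisely because the residual freedom in the $\theta$-component of $\omega_\beta{}^\alpha$ left over after the metric-compatibility equation is an arbitrary anti-Hermitian matrix, and this choice is exactly what realizes the second half of (3). As written, your sketch skips the one step that actually produces that condition; once the normalization is put in place, the rest of the construction goes through as you describe.
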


Note that the torsion on $HM \times HM$ is always nonzero.
The pseudo-Hermitian torsion, denoted by $\tau$, is the $TM$-valued 1-form defined by $\tau(X) = T_{\nabla} (\xi,X)$. Define the tensor $A$ by $A(X, Y) = g_\theta (X, \tau (Y)) $ for any $X, Y \in TM$. The condition \eqref{a-tor1} leads that $A$ is trace-free and symmetric.
A pseudo-Hermitian manifold is called Sasakian if $\tau \equiv 0$. Sasakian geometry is very rich as the odd-dimensional analogous of K\"ahler geometry. We refer the readers to the book \cite{boyer2008sasakian} by C. P.  Boyer, and K. Galicki.

Let $R$ be the curvature tensor of the Tanaka-Webster connection. As the Riemannian curvature, $R$ satisfies
\begin{align*}
\langle R(X, Y) Z, W \rangle = - \langle R(X, Y) W, Z \rangle = - \langle R(Y, X) Z, W \rangle
\end{align*}
for any $X, Y, Z, W \in \Gamma (TM)$. 
Set $e_0= \xi$. Let $\{e_i\}_{i=1}^{2n}$ be a local orthonormal basis of $HM$ restricted on some open set $U$ satisfying $e_{i+n} = J e_i, i = 1, \dots , n$. Then $\{\eta_\alpha = \frac{1}{\sqrt{2}} (e_\alpha - i J e_i) \}_{\alpha=1}^n$ is a unitary frame of $T_{1,0} M |_U$. 
Besides $R_{\bar{\alpha} \beta \lambda \bar{\mu}} = \langle R(\eta_\lambda, \eta_{\bar{\mu}}) \eta_\beta, \eta_{\bar{\alpha}} \rangle$, the other parts of $R$ are clear:
\begin{gather}
R_{\ba \beta \bl \bm}  = 2 i ( A_{\ba \bm} \delta_{\beta \bl} - A_{ \ba \bl} \delta_{\beta \bm} ) , \\
R_{\ba \beta \lambda \mu} =2 i ( A_{\beta \mu} \delta_{\ba \lambda}- A_{\beta \lambda } \delta_{\ba \mu } ) , \\
R_{\ba \beta 0 \bm} = A_{\ba \bm, \beta} , \label{b-tor6} \\
R_{ \ba \beta 0  \mu} = - A_{\mu \beta, \ba } , \label{b-tor7}
\end{gather}
where $A_{\mu \beta, \bar{\alpha}}$ are the components of $\nabla A$. In particular, we will set
\begin{align*}
\tilde{R} = R  \big|_{T_{0,1} M \otimes T_{1,0} M \otimes T_{1,0} M \otimes T_{0,1} M}
\end{align*}
which is called pseudo-Hermitian curvature.

As a Riemannian manifold, $(M, g_\theta)$ carries the Levi-Civita connection $D$ and the Riemannian curvature $\hat{R}$.
Dragomir and Tomassini \cite{dragomir2006cr} have derived the relationship between $\nabla$ and $D$:
\begin{align}
D = \nabla - (d \theta + A) \otimes \xi + \tau \otimes \theta + 2 \theta \odot J, \label{b-con}
\end{align}
where $2 \theta \odot J = \theta \otimes J + J \otimes \theta$. 
They also have deduced the relationship between $R$ and $\hat{R}$: for any $X, Y, Z \in \Gamma(TM)$,
\begin{align*}
\hat{R} (X, Y) Z = & R (X, Y) Z + ( LX \wedge LY ) Z  + d \theta (X, Y) J Z \numberthis \label{b-pseudoriemcur} \\
& \quad  - g_\theta ( S (X, Y), Z ) T+ \theta (Z ) S ( X, Y)  \\
& \quad - 2 g_\theta ( \theta \wedge \mathcal{O} (X, Y), Z) T + 2 \theta ( Z) (\theta \wedge \mathcal{O}) ( X, Y ) ,
\end{align*}
where
\begin{align}
(LX \wedge LY ) Z =& g_\theta (LX, Z) LY - g_\theta (LY, Z) LX , \\
S (X, Y) =& ( \nabla_X \tau ) Y - (\nabla_Y \tau) X , \label{b-formula-s} \\
\mathcal{O} = & \tau^2 + 2 J \tau  - \pi_H , \label{b-formula-o} \\
L = & \tau + J .
\end{align}
Hence the first Bianchi identity of $R$ is 
\begin{align}
\mathcal{S} \left( R(X, Y) Z \right) = 2 \mathcal{S} \left( d \theta (X, Y) \tau (Z) \right).  \label{b-firstbianchi}
\end{align}
where $\mathcal{S}$ stands for the cyclic sum with respect to $X, Y, Z \in \Gamma (HM)$. It implies that $R_{\ba \beta \lambda \bm} = R_{\ba \lambda \beta \bm}$ which was given by Webster \cite{webster1978pseudo}.
Tanaka \cite{tanaka1975differential} defined the pseudo-Hermitian Ricci operator $R_*$ by 
\begin{align}
R_* X = - i \sum_{\lambda=1}^n R(\eta_\lambda, \eta_{\bar{\lambda}}) JX.
\end{align}
Set $R_* \eta_\alpha = R_{\alpha \bar{\beta}} \eta_\beta$. Hence $R_{\alpha \bar{\beta}} = R_{\bar{\beta} \alpha \lambda \bar{\lambda}} = R_{\bar{\beta} \lambda \alpha \bar{\lambda}} $ by the first Bianchi identity. 
The pseudo-Hermitian scalar curvature $\rho$ is half of the trace of $R_*$, that is $\rho = R_{\alpha \bar{\alpha}}$.
The following second Bianchi identities were given by Lee in Lemma 2.2 of \cite{lee1988psuedo}. Please note that this paper follows the same exterior algebra as one in \cite{dragomir2006cr} which makes some coefficients of commutations different with ones in \cite{lee1988psuedo}.

\begin{lemma}
	The pseudo-Hermitian curvature and torsion satisfy the following identities
	\begin{gather}
	A_{\alpha \beta, \gamma} = A_{\alpha \gamma, \beta} \label{b-sym-tor} \\
	R_{\ba \beta \lambda \bm, \gamma} - R_{\ba \beta \gamma \bm, \lambda} = 2 i ( A_{\beta \gamma, \bm} \delta_{\ba \lambda} +  A_{\gamma \beta , \ba} \delta_{\lambda \bm} - A_{\beta \lambda, \bm} \delta_{\ba \gamma} -  A_{\lambda \beta, \ba} \delta_{\gamma \bm} ), \label{b-sec-bianchi-1}  \\
	R_{\ba \beta \lambda \bm, 0}= A_{\lambda \beta, \ba \bm} + A_{\ba \bm , \beta \lambda} + 2 i (A_{\ba \bg} A_{\gamma \lambda } \delta_{\beta \bm} - A_{\beta \gamma} A_{\bg \bm} \delta_{\ba \lambda} ), \label{b-secbianchi2}
	\end{gather}
	and the contracted identities:
	\begin{gather}
	R_{\lambda \bm, \gamma} - R_{\gamma \bm, \lambda} = 2i (A_{\gamma \alpha, \ba} \delta_{\lambda \bm} - A_{\lambda \alpha, \ba} \delta_{\gamma \bm}),  \\
	\rho_\lambda - R_{\lambda \bm, \mu} = 2 i (n-1) A_{\lambda \mu, \bm}, \label{b-einscalar}  \\
	R_{\lambda \bm, 0} = A_{\lambda \alpha, \ba \bm} + A_{\ba \bm, \alpha \lambda}, \label{b-ric-reeb} \\
	\rho_0 =A_{\lambda \alpha, \ba \bl} + A_{\ba \bl, \alpha \lambda}. 
	\end{gather}
\end{lemma}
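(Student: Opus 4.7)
The plan is to derive all the identities from applying $d^2 = 0$ to the Tanaka-Webster structure equations and to the curvature 2-form. First I would fix a local unitary coframe $\{\theta^\alpha\}$ dual to $\{\eta_\alpha\}$ in which
\begin{align*}
d\theta &= i\, \delta_{\alpha\bar\beta}\, \theta^\alpha \wedge \theta^{\bar\beta}, \qquad
d\theta^\alpha = \theta^\beta \wedge \omega_\beta{}^\alpha + \theta \wedge \tau^\alpha,
\end{align*}
with $\tau^\alpha = A^\alpha{}_{\bar\beta}\theta^{\bar\beta}$, and take the curvature 2-form $\Omega_\beta{}^\alpha = d\omega_\beta{}^\alpha - \omega_\beta{}^\gamma \wedge \omega_\gamma{}^\alpha$. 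Expanding $\Omega_\beta{}^\alpha$ in $\{\theta^\lambda, \theta^{\bar\mu}, \theta\}$ recovers the components $R_{\ba\beta\lambda\bm}$, $R_{\ba\beta 0\bm}$, $R_{\ba\beta 0\mu}$ of the pseudo-Hermitian curvature.

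For \eqref{b-sym-tor}, I would apply $d$ to the second structure equation and use $d^2\theta^\alpha = 0$; the coefficient of $\theta^{\bar\beta}\wedge\theta^{\bar\gamma}\wedge\theta$ yields $A^\alpha{}_{\bar\beta,\bar\gamma} = A^\alpha{}_{\bar\gamma,\bar\beta}$, and combining with the purity symmetry $A_{\alpha\beta}=A_{\beta\alpha}$ from Proposition \ref{b-tanakawebster} then gives \eqref{b-sym-tor}. For \eqref{b-sec-bianchi-1} and \eqref{b-secbianchi2}, I would differentiate the curvature form once more to obtain
\begin{align*}
d\Omega_\beta{}^\alpha = \Omega_\beta{}^\gamma \wedge \omega_\gamma{}^\alpha - \omega_\beta{}^\gamma \wedge \Omega_\gamma{}^\alpha,
\end{align*}
substitute the component expansion of $\Omega$, and separate by 3-form type. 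The $\theta^\gamma\wedge\theta^\lambda\wedge\theta^{\bar\mu}$-coefficient produces \eqref{b-sec-bianchi-1}: the $A$-correction on the right-hand side arises because $d\theta^\alpha$ contains the $\theta\wedge\tau^\alpha$ piece, and the mixed-type curvature components $R_{\ba\beta 0\bm}$ and $R_{\ba\beta 0\mu}$ are expressed via \eqref{b-tor6} and \eqref{b-tor7} in terms of $\nabla A$. The $\theta^\lambda\wedge\theta^{\bar\mu}\wedge\theta$-coefficient gives \eqref{b-secbianchi2}; the quadratic $A\otimes A$ terms come from the cross-wedge of $\tau^\alpha$ (which is linear in $A$) with the mixed-type curvature pieces \eqref{b-tor6} and \eqref{b-tor7} (also linear in $A$).

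The contracted identities are then immediate: tracing \eqref{b-sec-bianchi-1} in $\ba,\lambda$ and using $R_{\ba\beta\lambda\bm} = R_{\ba\lambda\beta\bm}$ from the first Bianchi identity yields the first contracted equation, and a further trace produces \eqref{b-einscalar}. Similarly, tracing \eqref{b-secbianchi2} gives \eqref{b-ric-reeb} and, after one more contraction, the identity for $\rho_0$.

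The main obstacle is the bookkeeping. Because the Tanaka-Webster connection has torsion in both horizontal directions (via $d\theta = i\delta_{\alpha\bar\beta}\theta^\alpha\wedge\theta^{\bar\beta}$) and in the Reeb direction (via $\tau^\alpha$), the clean torsion-free second Bianchi identity is corrected by torsion-curvature couplings; one must carefully convert exterior derivatives into covariant derivatives $\nabla$ using the structure equations, and recognize which combinations of correction terms assemble into the $\nabla A$, $\nabla^2 A$ and $A\otimes A$ expressions through \eqref{b-tor6} and \eqref{b-tor7}. Once the type decomposition is carried out exhaustively, the identities follow by reading off coefficients.
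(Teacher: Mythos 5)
Your outline is correct and is essentially the proof of the result the paper itself does not reprove but cites (Lee's Lemma 2.2 in \cite{lee1988psuedo}): the identities are obtained by applying $d^2=0$ to the Tanaka--Webster structure equations and to the curvature form, then reading off coefficients by type and contracting with the first Bianchi symmetry $R_{\ba\beta\lambda\bm}=R_{\ba\lambda\beta\bm}$. The only point to watch is the one the paper flags explicitly: the coefficients (e.g.\ the factors of $2i$ coming from $d\theta$) depend on the exterior-algebra convention, which here follows \cite{dragomir2006cr} rather than \cite{lee1988psuedo}, so the normalization of your structure equation for $d\theta$ must be fixed consistently before reading off the constants in \eqref{b-sec-bianchi-1} and \eqref{b-secbianchi2}.
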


\begin{lemma} \label{b-ricrelation}
	Let $\hat{Ric}$ be the Riemannian Ricci tensor.
	The relation of $R_*$ and $\hat{Ric}$ is
	\begin{align}
	\langle \hat{Ric} (X) , Y \rangle=& \langle R_* X, Y \rangle - 2 \langle \pi_H X, \pi_H Y \rangle  + (2 n - |\tau|^2) \theta (X) \theta (Y)  \label{b-riempseudoric} \\
	& - 2 (n-2) A(JX, Y) - \langle (\nabla_\xi \tau) X, Y \rangle + div \tau (X) \theta (Y) + \theta(X) div \tau (Y)  \nonumber
	\end{align}
	for $X, Y \in \Gamma (TM)$ and 
	\begin{align*}
	|\tau|^2 = \sum_{i=1}^{2n} \langle \tau^2 (e_i), e_i \rangle = 2 \sum_{\alpha, \beta =1}^n A_{\alpha \beta} A_{\bar{\alpha} \bar{\beta}}.
	\end{align*}
\end{lemma}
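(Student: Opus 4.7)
The plan is to take the Riemannian trace of the curvature comparison formula \eqref{b-pseudoriemcur} in a suitably adapted orthonormal frame, and then to identify each contracted term with one of the objects appearing in \eqref{b-riempseudoric}. Fix a local orthonormal frame $\{e_0 = \xi, e_1, \dots, e_{2n}\}$ with $e_{i+n} = J e_i$, so that $\{e_1,\dots,e_{2n}\}$ is an orthonormal basis of $HM$, and compute
\[
\langle \hat{Ric}(X), Y \rangle \;=\; \sum_{a=0}^{2n} \langle \hat R(e_a, X) Y, e_a \rangle
\]
by substituting \eqref{b-pseudoriemcur} and summing term by term.

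First I would handle the pure curvature term $\sum_a \langle R(e_a,X)Y,e_a\rangle$. Because $\nabla$ preserves the splitting $TM = HM \oplus \mathbb R\xi$ and $R(\cdot,\cdot)\xi$ is encoded by \eqref{b-tor6}–\eqref{b-tor7}, this trace splits as a horizontal contraction plus Reeb contributions. The horizontal contraction, after invoking the first Bianchi identity \eqref{b-firstbianchi} and the definition $R_*X = -i\sum_\lambda R(\eta_\lambda,\eta_{\bar\lambda})JX$, produces exactly $\langle R_*X,Y\rangle$ when $X,Y \in HM$; the $\xi$-components of $X$ or $Y$ will, via \eqref{b-tor6}–\eqref{b-tor7}, contribute the $\mathrm{div}\,\tau(X)\theta(Y)+\theta(X)\mathrm{div}\,\tau(Y)$ terms. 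Next I would contract the symmetric piece $(LX\wedge LY)Z$ using $L = \tau+J$: expanding $L e_a \wedge LX$ and pairing with $e_a$ yields a trace over $\tau^2$ (which will account for $|\tau|^2\theta(X)\theta(Y)$), together with cross terms between $\tau$ and $J$ that become $A(JX,Y)$-type contributions.

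The term $d\theta(e_a,X)JY$, once traced, essentially gives a horizontal pairing whose final role is to help assemble the constant in front of $\langle \pi_HX,\pi_HY\rangle$. The pair involving $S$ in \eqref{b-formula-s}, since $S(e_a,X)$ is horizontal while one factor carries $\xi$, contracts to produce the $(\nabla_\xi\tau)X$ term (from $\sum_a \langle S(e_a,X),e_a\rangle$ and the symmetry \eqref{b-sym-tor} of $A$) together with additional $\mathrm{div}\,\tau$ pieces collected earlier. Finally, the $\mathcal{O}$-term is expanded using \eqref{b-formula-o}: $\tau^2$ contributes a constant $|\tau|^2$ to the $\theta(X)\theta(Y)$ coefficient and the balancing $2n$ from $-\pi_H$, while $2J\tau$ produces the $-2(n-2)A(JX,Y)$ coefficient once combined with the $A(JX,Y)$ contributions already collected from the $(LX\wedge LY)$ and $d\theta$ terms.

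The bookkeeping of the $A(JX,Y)$ coefficient is the main obstacle: that term receives contributions from at least three sources — the $L\wedge L$ trace, the $J\tau$ part of $\mathcal{O}$, and the mixed $d\theta\cdot J$ pieces — and arriving at the exact coefficient $-2(n-2)$ requires repeatedly using that $A$ is symmetric and trace-free (and that $\tau$ anticommutes with $J$). Similarly, verifying that the coefficient of $\theta(X)\theta(Y)$ comes out to $2n - |\tau|^2$ requires assembling one copy from $-\pi_H$ in $\mathcal O$ and another from $\tau^2$. Once these two coefficients are pinned down, the remaining terms line up directly, and the identity \eqref{b-riempseudoric} follows. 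The evaluation of $|\tau|^2 = 2\sum_{\alpha,\beta}A_{\alpha\beta}A_{\bar\alpha\bar\beta}$ is then a direct calculation using the unitary frame $\{\eta_\alpha\}$ and the symmetry of $A$.
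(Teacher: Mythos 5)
Your plan is essentially the paper's own proof: the authors also trace the comparison formula \eqref{b-pseudoriemcur} term by term (organizing the horizontal trace of $R$ into an auxiliary tensor $\mathcal{Q}(X)=\sum_i R(X,e_i)e_i$), use the first Bianchi identity \eqref{b-firstbianchi} to identify that trace with $\langle R_*X,Y\rangle$ up to a $2(n-1)\tau J$ correction, extract the two $\mathrm{div}\,\tau$ terms from the $S$-contraction and the $\xi$-slot of the curvature, and read off the $2n-|\tau|^2$ and $A(JX,Y)$ coefficients from the $\mathcal{O}$, $L\wedge L$ and $d\theta\cdot J$ contractions exactly as you describe. The only difference is organizational (you include $e_0=\xi$ in a single trace where the paper treats $\langle\hat R(\xi,X)Y,\xi\rangle$ separately), so the proposal is correct and matches the paper's argument.
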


\begin{proof}
	To prove \eqref{b-riempseudoric}, let's introduce an auxiliary tensor $\mathcal{Q}$ as follows:
	\begin{align}
	\mathcal{Q} (X) = \sum_{i=1}^{2n} R( X, e_i) e_i.
	\end{align}
	It suffices to deduce the following identities:
	\begin{align}
	\langle R_* X, Y \rangle =& \langle \mathcal{Q} (\pi_H X), \pi_H Y \rangle - 2 (m-1) A(J X, Y), \label{b-pseric} \\
	\langle \mathcal{Q} (X) , Y \rangle = & \langle \mathcal{Q} (\pi_H X), \pi_H Y \rangle + \theta(X) div \tau (Y) \label{b-ric2} \\
	\langle \hat{Ric} (X) , Y \rangle=& \langle \mathcal{Q} (X), Y \rangle - 2 \langle \pi_H X, \pi_H Y \rangle  + (2 n - |\tau|^2) \theta (X) \theta (Y) + 2 \langle \tau J  X,  Y \rangle \label{b-rric} \\
	& - \langle (\nabla_\xi \tau) X, Y \rangle + div \tau (X) \theta (Y)  \nonumber 
	\end{align}
	For \eqref{b-pseric}, on one hand, since $J X$ is horizontal, we can use the first Bianchi identity \eqref{b-firstbianchi} and obtain
	\begin{align*} \numberthis \label{b-9}
	-i & \sum_{\alpha=1}^n R(\eta_\alpha, \eta_{\bar{\alpha}}) JX - i \sum_{\alpha=1}^n  R(\eta_{\bar{\alpha}},  JX) \eta_\alpha - i \sum_{\alpha=1}^n R(JX, \eta_\alpha) \eta_{\bar{\alpha}} \\
	&= - i \sum_{\alpha=1}^n 2 d \theta (\eta_\alpha, \eta_{\bar{\alpha}}) \tau J X - i \sum_{\alpha=1}^n 2 d \theta (\eta_{\bar{\alpha}}, JX) \tau e_\alpha - i \sum_{\alpha=1}^n 2 d \theta (JX, \eta_\alpha) \tau \eta_{\bar{\alpha}}  \\
	& = 2n \tau JX - 2 \sum_{\alpha=1}^n \tau J \bigg( \langle \eta_{\bar{\alpha}}, X \rangle \eta_\alpha + \langle X, \eta_\alpha \rangle \eta_{\bar{\alpha}} \bigg) \\
	& = 2 (n-1) \tau JX. 
	\end{align*}
	On the other hand, note that 
	\begin{align*}\numberthis \label{b-8}
	i \sum_{\alpha=1}^n  R(\eta_{\bar{\alpha}},  JX) \eta_\alpha + i \sum_{\alpha=1}^n R(JX, \eta_\alpha) \eta_{\bar{\alpha}} = & i \sum_{\alpha=1}^n  R(\eta_{\bar{\alpha}},  JX) \eta_\alpha - i \sum_{\alpha=1}^n R(\eta_\alpha, JX) \eta_{\bar{\alpha}}  \\
	= & \sum_{\alpha=1}^n J \big( R(\eta_{\bar{\alpha}},  JX) \eta_\alpha \big) + J \big( R(\eta_\alpha, JX) \eta_{\bar{\alpha}} \big) \\
	= & J \circ \mathcal{Q} (JX) 
	\end{align*}
	Substituting \eqref{b-8} into \eqref{b-9} and replacing $X, Y$ by $JX, JY$, we obtain \eqref{b-pseric}.

	The identity \eqref{b-ric2} is due to \eqref{b-pseudoriemcur} and the following calculation:
	\begin{align*}
		\langle \mathcal{Q} (\xi), \pi_Y \rangle = & \sum_{i = 1}^{2n} \langle R (\xi, e_i) e_i, \pi_H Y \rangle = \sum_{i = 1}^{2n} \langle \hat{R} (\xi, e_i) e_i, \pi_H Y \rangle \\ 
		=& \sum_{i = 1}^{2n} \langle \hat{R} (e_i, \pi_H Y) \xi, e_i  \rangle = \sum_{i=1}^{2n} \langle S (e_i, \pi_H Y) , e_i \rangle = div \tau (Y).
	\end{align*}

	For \eqref{b-rric}, by \eqref{b-pseudoriemcur} and $e_i \in \Gamma (HM)$, we have 
	\begin{align*}
	\langle \hat{Ric} (X), Y \rangle = & 
	\sum_{i=1}^{2n} \langle \hat{R}(e_i, X) Y, e_i \rangle + \langle \hat{R} (\xi, X) Y, \xi \rangle \numberthis \label{b-ric1} \\
	= & \sum_{i=1}^{2n} \langle R(e_i, X) Y, e_i \rangle + \sum_{i=1}^{2n} \langle (L e_i \wedge L X) Y, e_i \rangle + \sum_{i=1}^{2n} 2 d \theta (e_i, X) \langle J Y, e_i \rangle \\
	& + \sum_{i=1}^{2n} \theta(Y) \langle S(e_i,X) , e_i \rangle + \sum_{i=1}^{2n} 2 \theta(Y) \langle (\theta \wedge \mathcal{O}) (e_i, X), e_i \rangle + \langle \hat{R} (\xi, X) Y, \xi \rangle 
	\end{align*}
	Now we see each terms in the right side except the first one. Note that
	\begin{align}
	\sum_{i=1}^{2n} \langle (L e_i \wedge L X) Y, e_i \rangle = \sum_{i=1}^{2n} \langle L e_i , Y \rangle  \langle L X, e_i \rangle - \langle LX, Y \rangle \langle L e_i, e_i \rangle \label{b-1}
	\end{align}
	On one hand, since $\langle L e_i, Y \rangle  = \langle e_i, \tau Y \rangle  - \langle e_i, J Y \rangle$, 
	we find
	\begin{align*}
	\sum_{i=1}^{2n} \langle L e_i , Y \rangle  \langle L X, e_i \rangle =& \langle L X , \tau Y \rangle - \langle L X, J Y \rangle \numberthis \label{b-2} \\
	= & \langle \tau X, \tau Y \rangle + \langle JX, \tau Y \rangle - \langle \tau X, J Y \rangle - \langle J X, JY \rangle \\
	= & \langle \tau X, \tau Y \rangle - \langle \pi_H X, \pi_H Y \rangle. 
	\end{align*}
	Here the last equation is due to $\tau J + J \tau =0$ which implies 
	\begin{align*}
	\langle J X, \tau Y \rangle = - \langle X, J \tau Y \rangle = \langle X, \tau J Y \rangle = \langle \tau X, J Y \rangle.
	\end{align*}
	On the other hand, 
	\begin{align}
	\langle L e_i, e_i \rangle = trace_{G_\theta} \tau + trace_{G_\theta} J = 0. \label{b-3}
	\end{align}
	Substituting \eqref{b-2} and \eqref{b-3} into \eqref{b-1}, the result is 
	\begin{align}
	\sum_{i=1}^{2n} \langle (L e_i \wedge L X) Y, e_i \rangle = \langle \tau X, \tau Y \rangle - \langle \pi_H X, \pi_H Y \rangle. \label{b-5}
	\end{align}
	The third term in \eqref{b-ric1} is due to
	\begin{align}
	\sum_{i=1}^{2n} 2 d \theta (e_i, X) \langle JZ, e_i \rangle = - 2 \langle JY, JZ \rangle = -2 \langle \pi_H X, \pi_H Y \rangle. \label{b-6}
	\end{align}
	The fourth term in \eqref{b-ric1} comes from the formula \eqref{b-formula-s} of $S$ and the parallelism of $HM$ with respect to Tanaka-Webster connection:
	\begin{align}
	\sum_{i=1}^{2n} \langle S(e_i, X), e_i \rangle = \sum_{i=1}^{2n} \langle (\nabla_{e_i} \tau) X, e_i \rangle + \sum_{i=1}^{2n} \langle (\nabla_Y \tau) e_i, e_i \rangle = (div \tau) X. \label{b-7}
	\end{align}
	For the fifth term in \eqref{b-ric1}, by the formula \eqref{b-formula-o} of $\mathcal{O}$, we have 
	\begin{align}
	\sum_{i=1}^{2n} 2 \langle (\theta \wedge \mathcal{O}) (e_i, X), e_i \rangle = & \sum_{i=1}^{2n} - \langle \theta (X) \mathcal{O} (e_i) , e_i \rangle \label{b-4} \\ 
	= & \sum_{i=1}^{2n} -\theta (X) \langle (\tau^2 + 2 J \tau - \pi_H ) (e_i) , e_i \rangle \nonumber \\
	= & \theta (X) (2n -|\tau|^2) \nonumber 
	\end{align}
	which is due to 
	\begin{align*}
	- \sum_{i=1}^{2n} \langle J \tau (e_i), e_i \rangle = \sum_{i=1}^{2n} \langle \tau J e_i , e_i \rangle = \sum_{i=1}^{m} \langle \tau J e_i, e_i \rangle + \langle \tau J^2 e_i, J e_i \rangle = 0.
	\end{align*}
	For the sixth term in \eqref{b-ric1}, we have 
	\begin{align*}
	\langle \hat{R} (\xi, X) Y, \xi \rangle = & - \langle S(\xi, X), Y \rangle - 2 \langle (\theta \wedge \mathcal{O}) (\xi, X), Y \rangle \\
	= & - \langle (\nabla_\xi \tau) X, Y \rangle - \langle \mathcal{O} (X), Y \rangle \\
	= & - \langle (\nabla_\xi \tau) X, Y \rangle - \langle \tau X, \tau Y \rangle - 2 \langle J \tau X, Y \rangle + \langle \pi_H X, \pi_H Y \rangle . \numberthis \label{b-10}
	\end{align*}
	By substituting \eqref{b-5}, \eqref{b-6}, \eqref{b-7}, \eqref{b-4} and \eqref{b-10} to \eqref{b-ric1}, we get \eqref{b-rric}.

	For \eqref{b-riempseudoric}, we note that
	\begin{align}
		\langle \mathcal{Q} (X), Y \rangle = \langle \mathcal{Q} (J^2 X), J^2 Y \rangle + \theta (X) \langle \mathcal{Q} (\xi), \pi_H Y \rangle. \label{b-11}
	\end{align}
	Using \eqref{b-pseudoriemcur}, the last term can be calculated by 
	\begin{align}
		\langle \mathcal{Q} (\xi), \pi_H Y \rangle &= \sum_{i=1}^{2n} \langle R(\xi , e_i) e_i, \pi_H Y \rangle \label{b-12}  \\
		&= \sum_{i=1}^{2n} \langle \hat{R} (\xi , e_i) e_i, \pi_H Y \rangle = \sum_{i=1}^{2n} \langle \hat{R} (e_i, \pi_H Y ) \xi , e_i  \rangle = div \tau (Y). \nonumber
	\end{align}
	Substituting \eqref{b-pseric}, \eqref{b-ric2}, \eqref{b-11} and \eqref{b-12} to \eqref{b-rric}, we obtain \eqref{b-riempseudoric}.
\end{proof}

A pseudo-Hermitian structure $\theta$ is called pseudo-Einstein if $R_* = \frac{\rho}{n} G_\theta$.
Since the curvature associated with $\nabla$ contains the pseudo-Hermitian torsion, the pseudo-Einstein structure will heavily depend on it too. For example, by \eqref{b-einscalar}, if $(M, HM, J, \theta)$ is pseudo-Einstein with dimension $2n+ 1 \geq 5$, then the horizontal gradient of $\rho$, the restriction of $\nabla \rho$ on $HM$, is
\begin{align} \label{b-tordiv}
\nabla_b \rho = 2n \mbox{ div } \tau \circ J,
\end{align}
where the horizontal gradient $\nabla_b \rho$ is the horizontal restriction of the gradient $\nabla \rho$.
Hence under the pseudo-Einstein condition, the constancy of the pseudo-Hermitian scalar curvature is equivalent to the vanishing horizontal gradient of the pseudo-Hermitian torsion.

The Ricci identity is very useful to derive Bochner formulae in Riemannian geometry. It has the following analogous for any affine connection.

\begin{lemma} \label{c-lem-ricidentity}
	Assume that $  \sigma \in \Gamma ( \otimes^p T^* M )  $ and $ X_1 , \cdots , X_p \in \Gamma (TM) $. Then 
	\begin{align} \label{c-ricidentity}
	& ( \nabla^2 \sigma )   ( X_1 , \cdots , X_p ; X, Y ) - ( \nabla^2 \sigma )   ( X_1 , \cdots , X_p ; Y, X )  \\
	&= \sigma  ( X_1 , \cdots , R(X, Y) X_i, \cdots,  X_p ) + \big( \nabla_{T_\nabla (X, Y) }  \sigma \big) ( X_1 , \cdots , X_p ). \nonumber
	\end{align}
	where
	\begin{align*}
		( \nabla^2 \sigma )   ( X_1 , \cdots , X_p ; X, Y ) = (\nabla_Y \nabla \sigma) (X_1, \cdots, X_p; X).
	\end{align*}
\end{lemma}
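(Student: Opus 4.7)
The plan is to compute the commutator $(\nabla^2\sigma)(\cdots;X,Y) - (\nabla^2\sigma)(\cdots;Y,X)$ by directly unwinding the definition of the second covariant derivative, then identify the two resulting pieces via the definitions of curvature and torsion. This is essentially bookkeeping; the only point demanding care is the sign convention in the derivation action of $R(X,Y)$ on a covariant tensor, and I do not foresee any real obstacle.

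First I unwrap the given convention $(\nabla^2\sigma)(X_1,\ldots,X_p;X,Y) = (\nabla_Y\nabla\sigma)(X_1,\ldots,X_p;X)$. Applying $\nabla_Y$ (as a derivation on tensors) to the $(0,p+1)$-tensor $\nabla\sigma$ yields
\[
(\nabla^2\sigma)(X_1,\ldots,X_p;X,Y) = (\nabla_Y\nabla_X\sigma)(X_1,\ldots,X_p) - (\nabla_{\nabla_Y X}\sigma)(X_1,\ldots,X_p).
\]
Antisymmetrizing in $(X,Y)$ turns the $\nabla$-directional terms into $\nabla_{\nabla_X Y - \nabla_Y X}\sigma$ and the iterated derivatives into $-[\nabla_X,\nabla_Y]\sigma$, applied to $(X_1,\ldots,X_p)$.

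Next I invoke the standard identity $[\nabla_X,\nabla_Y] = R(X,Y) + \nabla_{[X,Y]}$ on the tensor bundle, with $R(X,Y)$ extended as a derivation. Because $\sigma$ is purely covariant, this derivation acts by $(R(X,Y)\cdot\sigma)(X_1,\ldots,X_p) = -\sum_i \sigma(X_1,\ldots,R(X,Y)X_i,\ldots,X_p)$. Substituting and using the definition $T_\nabla(X,Y) = \nabla_X Y - \nabla_Y X - [X,Y]$, the sign on the curvature piece flips and the three remaining connection-directional terms collapse into $\nabla_{T_\nabla(X,Y)}\sigma$, giving exactly \eqref{c-ricidentity}. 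Since every step is $C^\infty(M)$-multilinear in $X,Y,X_1,\ldots,X_p$, the identity may be checked pointwise by choosing extensions whose covariant derivatives vanish at the prescribed point if one prefers to bypass Christoffel symbols.
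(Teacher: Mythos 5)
Your proof is correct and is exactly the direct unwinding of the definitions that the paper itself invokes (the paper states the proof "is directly from the definition" and omits it). The computation of the commutator, the identification of the curvature derivation term with the sign flip on covariant slots, and the collapse of the remaining directional derivatives into $\nabla_{T_\nabla(X,Y)}\sigma$ are all as intended.
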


The proof is directly from the definition and we omit it here. As an application, 
Lemma \ref{c-lem-ricidentity} yields the commutations of second derivatives $\nabla^2 A$.

\begin{lemma} \label{c-psetorcom}
	The commutations of the derivatives of the pseudo-Hermitian torsion are 
	\begin{align}
	A_{\ba \bb , \gamma \lambda } - A_{\ba \bb, \lambda \gamma} =& 2 i ( A_{ \rho \gamma } \delta_{\ba \lambda} - A_{\rho \lambda} \delta_{\ba \gamma} ) A_{\br \bb} + 2i ( A_{\rho \gamma} \delta_{\bb \lambda} -A_{\rho \lambda } \delta_{\bb \gamma}  ) A_{\br \ba} \label{c-tor1} \\
	A_{\ba \bb , \bg \bl} - A_{\ba \bb , \bl \bg} = & 0 \label{c-tor2} \\
	A_{\ba \bb , \gamma \bl } - A_{\ba \bb, \bl \gamma} =& R_{\rho \ba \gamma \bl} A_{\br \bb} + R_{ \rho \bb \gamma \bl } A_{\br \ba } + 2i \delta_{\gamma \bl} A_{\ba \bb, 0} \label{c-tor3} \\
	A_{\ba \bb , 0 \gamma} - A_{\ba \bb, \gamma 0} = & A_{\gamma \rho , \ba } A_{\br \bb} + A_{\gamma \rho , \bb} A_{\br \ba} + A_{\gamma \rho} A_{\ba \bb , \br} \label{c-tor4} \\
	A_{\ba \bb, 0 \bg} - A_{ \ba \bb, \bg 0 } = & - A_{\ba \bg , \rho } A_{\br \bb} - A_{\bb \bg, \rho} A_{\br \ba} + A_{\ba \bb, \rho } A_{\bg \br} \label{c-tor5}
	\end{align}
\end{lemma}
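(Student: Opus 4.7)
The strategy is to apply the Ricci identity of Lemma \ref{c-lem-ricidentity} with $\sigma = A$, regarded as a section of $T^*M \otimes T^*M$, evaluated on the frame vectors $\eta_{\bar\alpha}, \eta_{\bar\beta}$, and then to substitute the explicit expressions for the Tanaka--Webster torsion $T_\nabla$ from Proposition \ref{b-tanakawebster} and for the curvature components $R_{\bar\alpha\beta\cdots}$ recorded in Section \ref{sec-basic}. Two general observations keep the bookkeeping concrete. First, since $\nabla$ preserves the splitting $TM\otimes\mathbb{C} = T_{1,0}M\oplus T_{0,1}M\oplus\mathbb{C}\xi$, each vector $R(X,Y)\eta_{\bar\alpha}$ is of pure type $(0,1)$; second, since $\tau J + J\tau = 0$ forces $A_{\alpha\bar\beta}=0$, only the $\eta_{\bar\sigma}$-components of $R(X,Y)\eta_{\bar\alpha}$ survive once the curvature term is paired against $A$. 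Thus every commutator in the lemma collapses to a direct substitution of one curvature component and (at most) one torsion term.

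For \eqref{c-tor1} and \eqref{c-tor2} the Tanaka--Webster torsion $T_\nabla(\eta_\lambda,\eta_\gamma)=2d\theta(\eta_\lambda,\eta_\gamma)\xi$ vanishes because $d\theta$ is of type $(1,1)$, and similarly for the fully barred pair; only the curvature pieces contribute. Substituting the formula for $R_{\bar\alpha\beta\lambda\mu}$ and summing over the two slots of $A$ yields \eqref{c-tor1}, while for \eqref{c-tor2} the two curvature contributions, computed from $R_{\bar\alpha\beta\bar\lambda\bar\mu}$, cancel exactly thanks to the symmetry $A_{\bar\alpha\bar\beta}=A_{\bar\beta\bar\alpha}$. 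For \eqref{c-tor3} the torsion contributes $T_\nabla(\eta_\gamma,\eta_{\bar\lambda})=2d\theta(\eta_\gamma,\eta_{\bar\lambda})\xi$, a constant multiple of $\delta_{\gamma\bar\lambda}\xi$, giving precisely the $2i\delta_{\gamma\bar\lambda}A_{\bar\alpha\bar\beta,0}$ term; the remaining curvature contribution $R_{\rho\bar\alpha\gamma\bar\lambda}A_{\bar\rho\bar\beta}+R_{\rho\bar\beta\gamma\bar\lambda}A_{\bar\rho\bar\alpha}$ accounts for the other two summands. Finally, for \eqref{c-tor4} and \eqref{c-tor5} Proposition \ref{b-tanakawebster}(3) gives $T_\nabla(\xi,\eta_\gamma)=\tau\eta_\gamma$, and expanding $A(X,Y)=g_\theta(X,\tau Y)$ together with $\tau J+J\tau=0$ yields $\tau\eta_\gamma=A_{\rho\gamma}\eta_{\bar\rho}$ and $\tau\eta_{\bar\gamma}=A_{\bar\rho\bar\gamma}\eta_\rho$; this $\nabla_{T_\nabla}$ term produces the $A_{\gamma\rho}A_{\bar\alpha\bar\beta,\bar\rho}$ (respectively $A_{\bar\gamma\bar\rho}A_{\bar\alpha\bar\beta,\rho}$) contribution, while the remaining summands are read off directly from \eqref{b-tor6} and \eqref{b-tor7}.

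The main obstacle is not conceptual but notational: one must carefully match the sign and the $2i$ factors in the curvature identities of Section \ref{sec-basic} (which follow the exterior-algebra convention of \cite{dragomir2006cr} rather than that of \cite{lee1988psuedo}) against the sign of the torsion term in Lemma \ref{c-lem-ricidentity}, and decide whether the comma ordering $A_{\cdots,XY}$ corresponds to first differentiating along $X$ or along $Y$. Once this convention is calibrated against a known identity such as \eqref{b-sym-tor}, each of the five commutators reduces to a few lines of substitution.
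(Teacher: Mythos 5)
Your proposal is correct and follows exactly the route the paper intends: the paper gives no written-out proof of Lemma \ref{c-psetorcom}, stating only that it is an application of the Ricci identity of Lemma \ref{c-lem-ricidentity} with $\sigma = A$, which is precisely what you carry out by substituting the pure-torsion formula of Proposition \ref{b-tanakawebster} and the curvature components \eqref{b-tor6}--\eqref{b-tor7}. Your supporting observations (type-preservation of $\nabla$, $A_{\alpha\bar\beta}=0$ from $\tau J + J\tau =0$, the cancellation in \eqref{c-tor2} via the symmetry of $A$, and $T_\nabla(\eta_\gamma,\eta_{\bar\lambda}) = 2i\delta_{\gamma\bar\lambda}\xi$ producing the $A_{\bar\alpha\bar\beta,0}$ term) all check out against the paper's conventions.
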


The equation \eqref{c-tor1} has been given by Lee in \cite{lee1988psuedo}. But the coefficients are different due to the different exterior algebras. Now let's deduce the CR Bochner formulae of pseudo-Hermitian curvature $\hat{R}$ and $\nabla_\xi A$.

\begin{lemma} \label{b-lem-bochner-cur}
	The CR Bochner formula of $\tilde{R}$ is
	\begin{align*} \numberthis \label{c-bochner-cur}
	\Delta_b R_{ \ba \beta \lambda \bm } =& 2R_{ \beta \ba , \lambda \bm } - R_{\rho \ba} R_{\br \beta \lambda \bm} + R_{\beta \br} R_{\ba \rho \lambda \bm} + R_{\lambda \br} R_{\ba \beta \rho \bm} + R_{ \rho \bm } R_{\ba \beta \lambda \br}  \\
	& + 2 R_{\rho \ba \bm \gamma} R_{ \br \beta \lambda \bg } + 2 R_{ \br \beta \bm \gamma } R_{ \ba \rho \lambda \bg } + 2 R_{ \br \lambda \bm \gamma } R_{ \ba \beta \rho \bg }  \\
	&- (2n +4) i A_{\lambda \beta, \ba \bm} + 2n  i  A_{\ba \bm , \beta \lambda} + 4i A_{ \ba \bm , \lambda \beta} \\
	& - 4i A_{ \ba \bg, \gamma \lambda} \delta_{\beta \bm} - 4i A_{\ba \bg , \gamma \beta} \delta_{\bm \lambda} + 4i A_{\beta \gamma , \bg \bm} \delta_{\ba \lambda}  \\
	& +  (4n+ 8) A_{\rho \lambda} A_{\ba \br} \delta_{\beta \bm} + 8n A_{\rho \beta} A_{\ba \br} \delta_{\bm \lambda} + (4n-8) A_{\beta \rho} A_{\br \bm} \delta_{\ba \lambda} \\
	& - 8 A_{\rho \gamma} A_{\br \bg} (\delta_{\ba \gamma} \delta_{\beta \bm} + \delta_{\ba \beta} \delta_{\bm \lambda}) .
	\end{align*}
	where $\Delta_b$ is the sub-Laplacian operator.
	In particular, if $(M, HM, J, \theta)$ is pseudo-Einstein with dimension $2n+1 \geq 5$, then
	\begin{align} \label{c-subcur1}
	\Delta_b \tilde{R} = \tilde{R} * \tilde{R} + \nabla_b^2 A * J + A^2,
	\end{align}
	and
	\constantnumber{cst-1}
	\begin{align} \label{c-subcur2}
	\Delta_b |\tilde{R}| + C_{\ref*{cst-1}} |\tilde{R}|^2 + C_{\ref*{cst-1}} \big( | \nabla^2_b A | + |A^2| \big) \geq 0
	\end{align}
	where $C_{\ref*{cst-1}} = C_{\ref*{cst-1}} (n)$.
\end{lemma}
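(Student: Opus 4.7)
The plan is to compute $\Delta_b R_{\ba\beta\lambda\bm}$ component-wise by writing the sub-Laplacian as $\Delta_b = \sum_\gamma(\nabla_\gamma\nabla_{\bg}+\nabla_{\bg}\nabla_\gamma)$ and then systematically applying the second Bianchi identity \eqref{b-sec-bianchi-1} and the Ricci identities of Lemma \ref{c-lem-ricidentity}. First I would use \eqref{b-sec-bianchi-1} to trade a $\gamma$-derivative on $R_{\ba\beta\lambda\bm}$ for a $\lambda$-derivative on $R_{\ba\beta\gamma\bm}$ plus a linear torsion correction, and analogously for the conjugate index. Differentiating once more and contracting $\gamma$ with $\bg$, the $\gamma\bg$-trace of $R_{\ba\beta\gamma\bm}$ collapses to the Ricci component $R_{\beta\ba}$, producing the leading term $2R_{\beta\ba,\lambda\bm}$ after symmetrizing the two orderings of the sub-Laplacian.

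Commuting the inner derivatives before tracing produces, through Lemma \ref{c-lem-ricidentity}, the curvature-quadratic expressions $R_{\rho\ba}R_{\br\beta\lambda\bm}$, $R_{\beta\br}R_{\ba\rho\lambda\bm}$, $R_{\lambda\br}R_{\ba\beta\rho\bm}$, $R_{\rho\bm}R_{\ba\beta\lambda\br}$ and the cross pieces $R_{\rho\ba\bm\gamma}R_{\br\beta\lambda\bg}$, etc. The torsion contributions in \eqref{c-bochner-cur} come from three sources: (i) derivatives of the linear torsion correction in \eqref{b-sec-bianchi-1}, which account for $A_{\lambda\beta,\ba\bm}$, $A_{\ba\bm,\beta\lambda}$ and the $\delta$-weighted single-derivative pieces such as $A_{\ba\bg,\gamma\lambda}\delta_{\beta\bm}$; (ii) the mixed commutator \eqref{c-tor3}, which produces the extra $A_{\ba\bm,\lambda\beta}$ coefficient and, through the Reeb-direction term $2i\delta_{\gamma\bl}A_{\ba\bb,0}$ combined with \eqref{b-tor6}--\eqref{b-tor7}, the quadratic torsion contractions $A_{\rho\gamma}A_{\br\bg}(\cdots)$; (iii) the Ricci identity applied directly to $\nabla A$ within $\nabla^2 A$. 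The precise integer prefactors $(2n+4)$, $2n$, $4n+8$, $8n$, $4n-8$, $8$ then emerge by matching these three layers against the Kronecker contractions in the unitary frame.

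For the pseudo-Einstein specialization \eqref{c-subcur1}, I would substitute $R_{\alpha\bb}=\tfrac{\rho}{n}\delta_{\alpha\bb}$, so that $R_{\beta\ba,\lambda\bm}=\tfrac{1}{n}\rho_{,\lambda\bm}\delta_{\beta\ba}$; by \eqref{b-tordiv} the horizontal gradient of $\rho$ is $2n\,\mathrm{div}\,\tau\circ J$, whence $\rho_{,\lambda\bm}$ is schematically $\nabla_b^2 A$. All remaining terms in \eqref{c-bochner-cur} are then either schematically $\tilde R * \tilde R$ or quadratic in $A$, which yields \eqref{c-subcur1}. For the pointwise inequality \eqref{c-subcur2}, I would apply the standard Kato-type manipulation
\[
|\tilde R|\,\Delta_b|\tilde R| = \langle \tilde R,\Delta_b \tilde R\rangle + |\nabla_b \tilde R|^2 - \bigl|\nabla_b |\tilde R|\bigr|^2 \geq \langle \tilde R,\Delta_b \tilde R\rangle,
\]
so that $\Delta_b|\tilde R|\geq -|\Delta_b \tilde R|$, and then use \eqref{c-subcur1} to bound $|\Delta_b \tilde R|$ by $C(n)\bigl(|\tilde R|^2 + |\nabla_b^2 A| + |A^2|\bigr)$.

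The main obstacle is purely combinatorial: each invocation of \eqref{b-sec-bianchi-1} and each commutator from Lemma \ref{c-psetorcom} injects several $\delta$-weighted torsion terms, and pinning down the integer prefactors in \eqref{c-bochner-cur} requires careful bookkeeping of symmetric versus antisymmetric pieces under the $\gamma,\bg$ contraction. A clean separation of the $\nabla_\gamma\nabla_{\bg}$ and $\nabla_{\bg}\nabla_\gamma$ halves, together with systematic use of the first-Bianchi symmetry $R_{\ba\beta\lambda\bm}=R_{\ba\lambda\beta\bm}$, is essential to avoid double-counting.
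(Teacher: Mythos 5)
Your proposal follows essentially the same route as the paper: apply the second Bianchi identity \eqref{b-sec-bianchi-1}, commute derivatives via Lemma \ref{c-lem-ricidentity}, substitute \eqref{b-secbianchi2} for the Reeb-direction derivative $R_{\ba\beta\lambda\bm,0}$, and use \eqref{c-tor1} to put the torsion second derivatives into divergence form, then specialize under the pseudo-Einstein condition via \eqref{b-tordiv} and conclude \eqref{c-subcur2} by the standard Kato-type argument. The only quibble is a minor mis-attribution in your bookkeeping (the quadratic torsion contractions arise from \eqref{b-secbianchi2} and \eqref{c-tor1} rather than from \eqref{c-tor3}, which commutes derivatives of $A$, not of $R$), but this does not affect the validity of the outline, which is at the same level of detail as the paper's own sketch.
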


\begin{proof}
	Since the process is standard, we only outline the schedule.
	Due to the second Bianchi identity \eqref{b-sec-bianchi-1} and Lemma \ref{c-ricidentity}, we find
	\begin{align*}
	R_{ \ba \beta \lambda \bm, \bg \gamma } = & \  R_{\beta \ba, \lambda \bm}  \\
	&+ R_{ \rho \ba \bm \gamma } R_{ \br \beta \lambda \bg } +  R_{ \br \beta \bm \gamma } R_{ \ba \rho \lambda \bg } + R_{ \br \lambda \bm \gamma } R_{ \ba \beta \rho \bg } + R_{ \rho \bm } R_{ \ba \beta \lambda \br } \\
	& + 2i ( A_{ \ba \bm , \lambda \beta}  + A_{\bm \ba , \beta \lambda}  -A_{ \ba \bg, \lambda \gamma} \delta_{\beta \bm} -A_{\bg \ba, \beta \gamma} \delta_{\bm \lambda} ) +  2i A_{\beta \gamma , \bg \bm} \delta_{\ba \lambda} - 2n i A_{\lambda \beta, \ba \bm} \\
	& - 2i R_{ \ba \beta \lambda \bm, 0 }, 
	\end{align*}
	and
	\begin{align*}
	R_{\ba \beta \lambda \bm , \gamma \bg} = R_{ \ba \beta \lambda \bm , \bg \gamma } - R_{ \rho \ba } R_{ \br \beta \lambda \bm } + R_{\beta \br} R_{\ba \rho \lambda \bm} + R_{\lambda \br} R_{ \ba \beta \rho \bm } - R_{ \rho \bm } R_{ \ba \beta \lambda \br } + 2 n i R_ { \ba \beta \lambda \bm, 0} .  
	\end{align*}
	Hence by combining with \eqref{b-secbianchi2}, we have
	\begin{align*}
	\Delta_b R_{ \ba \beta \lambda \bm } =& R_{ \ba \beta \lambda \bm, \bg \gamma } + R_{\ba \beta \lambda \bm , \gamma \bg} \\
	= & 2R_{ \beta \ba , \lambda \bm } - R_{\rho \ba} R_{\br \beta \lambda \bm} + R_{\beta \br} R_{\ba \rho \lambda \bm} + R_{\lambda \br} R_{\ba \beta \rho \bm} + R_{ \rho \bm } R_{\ba \beta \lambda \br}  \\
	& + 2 R_{\rho \ba \bm \gamma} R_{ \br \beta \lambda \bg } + 2 R_{ \br \beta \bm \gamma } R_{ \ba \rho \lambda \bg } + 2 R_{ \br \lambda \bm \gamma } R_{ \ba \beta \rho \bg } \\
	& - (2n +4) i A_{\lambda \beta, \ba \bm} + 2n  i  A_{\ba \bm , \beta \lambda} + 4i A_{ \ba \bm , \lambda \beta} - 4i A_{ \ba \bg, \lambda \gamma} \delta_{\beta \bm} - 4i A_{\bg \ba, \beta \gamma} \delta_{\bm \lambda} \\
	& + 4i A_{\beta \gamma , \bg \bm} \delta_{\ba \lambda} - (4n -8)   (A_{\ba \bg} A_{\gamma \lambda } \delta_{\beta \bm} - A_{\beta \gamma} A_{\bg \bm} \delta_{\ba \lambda} )
	\end{align*}
	The rest for \eqref{c-bochner-cur} is to turn $A_{\bar{\alpha} \bar{\gamma}, \lambda \gamma}$ and $A_{\bar{\gamma} \bar{\alpha}, \beta \gamma}$ into divergence forms.
	This follows from \eqref{c-tor1}.
\end{proof}

A similar argument shows the CR Bochner formula of pseudo-Hermitian torsion.

\begin{lemma} \label{b-lem-bochner-tor}
	The CR Bochner formula of $A_{\alpha \beta , 0}$ is 
	\begin{align} \numberthis \label{c-reeb}
	\Delta_b A_{\alpha \beta} = 2 A_{\alpha \gamma, \bar{\gamma} \beta} - 2 (n-2) i A_{\alpha \beta , 0} + 2 R_{\bar{\rho} \alpha \beta \bar{\gamma}} A_{\rho \gamma} + R_{\beta \bar{\rho}} A_{\rho \alpha} - R_{\alpha \bar{\rho}} A_{\rho \beta}.
	\end{align}
\end{lemma}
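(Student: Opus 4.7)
The plan is to imitate the scheme used in the proof of Lemma \ref{b-lem-bochner-cur}: expand the sub-Laplacian $\Delta_b A_{\alpha\beta} = A_{\alpha\beta,\gamma\bar{\gamma}} + A_{\alpha\beta,\bar{\gamma}\gamma}$ and reduce each of the two pieces to a divergence-style term $A_{\alpha\gamma,\bar{\gamma}\beta}$ modulo curvature and Reeb-torsion corrections. The only ingredients I will need are the symmetry \eqref{b-sym-tor}, the commutation \eqref{c-tor3} of Lemma \ref{c-psetorcom} together with its complex conjugate, and the first Bianchi identity \eqref{b-firstbianchi} used to contract non-standard curvature indices into the pseudo-Hermitian Ricci tensor.

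For the first half, \eqref{b-sym-tor} gives $A_{\alpha\beta,\gamma} = A_{\alpha\gamma,\beta}$ and hence $A_{\alpha\beta,\gamma\bar{\gamma}} = A_{\alpha\gamma,\beta\bar{\gamma}}$. The last two derivatives are then swapped via the complex conjugate of \eqref{c-tor3},
\[
A_{\alpha\beta,\bar{\gamma}\lambda} - A_{\alpha\beta,\lambda\bar{\gamma}} = R_{\bar{\rho}\alpha\bar{\gamma}\lambda}A_{\rho\beta} + R_{\bar{\rho}\beta\bar{\gamma}\lambda}A_{\rho\alpha} - 2i\,\delta_{\bar{\gamma}\lambda}A_{\alpha\beta,0},
\]
in which the Reeb term has flipped sign because $\bar{\imath}=-i$. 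Specialising $\lambda=\beta$ and summing on $\gamma$ converts the first half into $A_{\alpha\gamma,\bar{\gamma}\beta} + 2i A_{\alpha\beta,0}$ plus two contracted curvature terms. For the second half I apply the same identity with $\lambda=\gamma$ to express $A_{\alpha\beta,\bar{\gamma}\gamma}$ as $A_{\alpha\beta,\gamma\bar{\gamma}} - R_{\alpha\bar{\rho}}A_{\rho\beta} - R_{\beta\bar{\rho}}A_{\rho\alpha} - 2in A_{\alpha\beta,0}$, using $\delta_{\bar{\gamma}\gamma}=n$ and the Ricci contractions $\sum_\gamma R_{\bar{\rho}\beta\bar{\gamma}\gamma}=-R_{\beta\bar{\rho}}$ and $\sum_\gamma R_{\bar{\rho}\alpha\bar{\gamma}\gamma}=-R_{\alpha\bar{\rho}}$; the residual $A_{\alpha\beta,\gamma\bar{\gamma}}$ is then processed by the first-half identity again, producing a second copy of $A_{\alpha\gamma,\bar{\gamma}\beta}$ with its own corrections.

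Adding the two halves yields $2A_{\alpha\gamma,\bar{\gamma}\beta}$ together with all the bookkeeping pieces. Normalising the remaining curvature via the two skew symmetries of $R$ and the first Bianchi identity, in particular $R_{\bar{\rho}\alpha\bar{\gamma}\beta}=-R_{\bar{\rho}\alpha\beta\bar{\gamma}}$ and $\sum_\gamma R_{\bar{\rho}\gamma\bar{\gamma}\beta}=-R_{\beta\bar{\rho}}$, one $R_{\beta\bar{\rho}}A_{\rho\alpha}$ contribution cancels between the halves, leaving exactly $2R_{\bar{\rho}\alpha\beta\bar{\gamma}}A_{\rho\gamma} + R_{\beta\bar{\rho}}A_{\rho\alpha} - R_{\alpha\bar{\rho}}A_{\rho\beta}$. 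The Reeb contributions accumulate as $2i + (2i-2in) = -2(n-2)i$, which reproduces \eqref{c-reeb}. The only real difficulty is careful index bookkeeping: tracking bars in non-standard slot positions of $R$, and handling the sign flip of the Reeb term under complex conjugation, which is precisely what produces the coefficient $2-n$ (rather than $n$) of $iA_{\alpha\beta,0}$. No new analytic input beyond Lemma \ref{c-psetorcom} is required.
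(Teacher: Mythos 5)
Your proposal is correct and is exactly the argument the paper intends: the paper omits the proof, saying only that it follows by ``a similar argument'' to Lemma \ref{b-lem-bochner-cur}, and your computation — splitting $\Delta_b A_{\alpha\beta}=A_{\alpha\beta,\gamma\bar\gamma}+A_{\alpha\beta,\bar\gamma\gamma}$, using \eqref{b-sym-tor} and the conjugate of \eqref{c-tor3}, then contracting with the curvature symmetries — is that argument carried out in full. The bookkeeping checks out, including the cancellation of one $R_{\beta\bar\rho}A_{\rho\alpha}$ term and the Reeb coefficient $2i+2i(1-n)=-2(n-2)i$.
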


\section{Subelliptic Estimates and CR Sobolev Embedding}

Let's first review Folland-Stein space in \cite{folland1974estimates}. For simplicity, we require that $(M, HM, J, \theta)$ is a closed pseudo-Hermitian manifold with dimension $2n+1$.
For any $f \in C^\infty (M)$, the Folland-Stein norm is given by
\begin{align}
||f||_{S^p_k} = \sum_{l=0}^k ||\nabla^l_b f ||_{p, M} \label{b-sobolevnorm}
\end{align}
where $\nabla^l_b f$ is the restriction of $\nabla^l f$ on the horizontal distribution $HM$ and  
\begin{align*}
||\nabla^l_b f||_{p, M} = \left( \int_M \big| \nabla^l_b f \big|_{g_\theta} \theta \wedge (d \theta)^n \right)^{\frac{1}{p}}.
\end{align*}
The Folland-Stein space $S^p_k (M)$ is the completion of $C^\infty (M)$ under the Folland-Stein norm. 
The CR Sobolev inequality also holds (cf. \cite{folland1974estimates}): there exists a constant $C_M$ such that
\begin{align}
\left( \int_M |f|^{\frac{2n +2}{n}} \right)^{\frac{n}{n+1}} \leq C_M \left( \int_M |\nabla_b f|^2 + \int_M |f|^2 \right) . \label{b-sobolevinequality}
\end{align}
But the dependency of $C_M$ is not clear to our best knowledge. 
Recently Chang, Chang, Han and Tie \cite{chang2017pseudo} have obtained another version of CR Sobolev inequality. 
Let's recall the definition of Carnot-Carath\'eodory distance (called CC distance for short). 
The CC distance $d_{cc} (x, y)$ of any two points $x, y \in M$ can be measured from the horizontal direction, that is 
\begin{align*}
d_{cc} (x, y) = \inf \left\{ \int_0^1 \big| \dot{\gamma} \big| dt \ \bigg| \ \gamma \in C_{x,y}  \right\},
\end{align*}
where $C_{x,y}$ is the set of all piecewise $C^1$ curves $\gamma: [0,1] \to M$ satisfying $\dot{\gamma} \in HM$ and $\gamma(0) = x, \gamma(1) = y$.
% \begin{align*}
% C_{x, y} = \big\{ \gamma : [0,1] \to M \mbox{ is piecewise } C^1 \big| \ \dot{\gamma} \in HM  \big\}.
% \end{align*}
Such curve is called horizontal curve. Clearly the Riemannian distance $d_{Riem} (x, y) \leq  d_{cc} (x, y) $.
Strichartz \cite{strichartz1986sub} has showed that if $M$ is complete, then there is at least one length minimizing horizontal curve reaching the CC distance. For any fixed $x \in M$, the CC ball of radius $r$ centered at $x$ is denoted by $B_r (x) = \{ y \in M \: | \: d_{cc} (x, y) < r \} $. Using Varopoulos' argument, one can obtain Sobolev inequality from the estimate of heat kernel (also see Theorem 11.4 in \cite{li2012geometric} or \cite{saloff1992elliptic}). Under this framework, Chang and his collaborators \cite{chang2017pseudo} applied the estimates of heat kernel associated to $\partial_t - \Delta_b$ in \cite{baudoin2014volume} and deduced a CR Sobolev inequality in complete pseudo-Hermitian manifolds. 
One can also refer to \cite{chang2016liyau} about the generalized curvature-dimension inequality and Li-Yau gradient estimate on pseudo-Hermitian manifolds.
It is notable that the estimate (3.19) in \cite{chang2017pseudo} is enough to obtain the following CR Sobolev inequality.

\begin{lemma}[CR Sobolev Inequality, Theorem 1.2 in \cite{chang2017pseudo}] \label{b-crsobolevlem}
	Let $(M, HM, J, \theta)$ be a complete pseudo-Hermitian $(2n+1)$-manifold with
	\begin{enumerate}[(i)]
		\item pseudo-Hermitian Ricci operator uniformly bound, that is $R_* \geq \kappa_1 G_\theta$,
		\item pseudo-Hermitian torsion and its divergence uniformly bound $|A|, |\mbox{div} A| \leq \kappa_2$,
	\end{enumerate}
	where $\kappa_1$ and $\kappa_2$ are constants. Then there exist $Q=Q(n, \kappa_1, \kappa_2)$ and $C= C(n, \kappa_1, \kappa_2)$ such that for any $x \in M$ and $\partial B_r(x) \neq \emptyset$, 
	\begin{align}
	\left( \int_{B_r (x)} |\phi|^{\frac{2Q}{Q-2}} \right)^{\frac{Q-2}{Q}} \leq C r^2 e^{C r^2} V (B_r (x))^{-\frac{2}{Q}}  \int_{B_r (x)} |\nabla_b \phi|^2, \quad \forall \phi \in C^\infty_0 (B_r (x)) . \label{b-crsobolev}
	\end{align}
	Here $V (B_r (x)) $ is the volume of $B_r (x)$.
\end{lemma}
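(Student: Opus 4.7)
The plan is to obtain the CR Sobolev inequality from heat kernel upper bounds via a Varopoulos-type argument, following the framework of \cite{baudoin2014volume} and \cite{saloff1992elliptic} adapted to the pseudo-Hermitian setting. The strategy breaks into three steps: (a) establish a generalized curvature-dimension inequality for the sub-Laplacian $\Delta_b$ from hypotheses (i)--(ii); (b) convert it into an on-diagonal heat kernel upper bound on CC balls; and (c) use the Varopoulos equivalence to extract the Sobolev inequality.

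For step (a), one works with the carré du champ $\Gamma(f, f) = |\nabla_b f|^2$ and the vertical operator $\Gamma^Z(f, f) = |\xi f|^2$, then iterates to form $\Gamma_2$ and $\Gamma^Z_2$. Expanding $\Gamma_2(f, f)$ via the pseudo-Hermitian Bochner identity produces exactly the horizontal Ricci term $R_{\alpha \bb} f_\beta f_{\ba}$, controlled from below by $\kappa_1 |\nabla_b f|^2$ thanks to $R_* \geq \kappa_1 G_\theta$, together with torsion cross-terms of the form $A_{\alpha \beta} f_{\ba} f_{\bb}$ and mixed terms $(\mbox{div}\, A)(\cdot)\, \xi f$ that arise when commuting $\nabla_b$ with $\xi$. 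The hypotheses $|A|, |\mbox{div}\, A| \leq \kappa_2$ allow these to be absorbed via Cauchy-Schwarz into $\Gamma_2$, $\Gamma^Z_2$ and $\Gamma^Z(f, f)$, yielding a $CD(\rho_1, \rho_2, \kappa, d)$-inequality in the Baudoin-Garofalo sense with constants depending only on $n, \kappa_1, \kappa_2$.

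Feeding this into the parabolic comparison technology of \cite{baudoin2014volume} gives an on-diagonal heat kernel estimate of the form $p_t(x, x) \leq C t^{-Q/2} e^{Ct}$ for $0 < t \leq 1$, with homogeneous dimension $Q = Q(n, \kappa_1, \kappa_2)$; this is exactly the estimate (3.19) of \cite{chang2017pseudo} cited in the paragraph above the lemma. The Varopoulos equivalence (Theorem 11.4 in \cite{li2012geometric}) translates an on-diagonal heat kernel upper bound $p_t(x,x) \lesssim t^{-Q/2}$ into a Sobolev-type inequality $\|\phi\|_{2Q/(Q-2)}^2 \lesssim \|\nabla_b \phi\|_2^2$ for $\phi$ supported in the appropriate region. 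Localizing the argument to a CC ball $B_r(x)$ via the Dirichlet heat semigroup on the ball and choosing the time scale $t \sim r^2$ produces the factor $r^2$; the exponential $e^{Cr^2}$ is inherited from the drift term in the heat kernel upper bound, while the normalization $V(B_r(x))^{-2/Q}$ is the standard price of converting the dimensional inequality to a ball-relative one through a Nash/Faber-Krahn argument.

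The main obstacle lies in step (a): extracting a clean curvature-dimension inequality from the pseudo-Hermitian Bochner identity in the presence of nonzero torsion. Unlike the Sasakian case where $\tau \equiv 0$ trivializes the Reeb-direction commutators, here one must track cross terms between $\nabla_b f$ and $\xi f$ produced by $\tau$, and control them using only zeroth-order information on $A$ and its divergence rather than bounds on $\nabla A$. The necessity of including $|\mbox{div}\, A|$ in the hypotheses is precisely that it enters through the term $\sum_i \langle (\nabla_{e_i} \tau)(\cdot), e_i \rangle$ appearing in the Reeb component of the Riemannian-to-pseudo-Hermitian correction (cf.\ Lemma \ref{b-ricrelation}); without it, the auxiliary $\Gamma^Z_2$ quantity cannot be bounded uniformly and the Baudoin-Garofalo machinery fails to close.
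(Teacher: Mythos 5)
The paper offers no proof of this lemma: it is quoted verbatim as Theorem 1.2 of \cite{chang2017pseudo}, and the paragraph preceding it records exactly the route you describe --- a generalized curvature-dimension inequality yielding heat kernel estimates for $\partial_t - \Delta_b$ as in \cite{baudoin2014volume}, converted to a Sobolev inequality by Varopoulos' argument. Your outline is therefore consistent with, and essentially identical to, the derivation the paper attributes to its source.
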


\begin{remark} \label{c-rmk-1}
	When $(M, HM, J , \theta)$ is Sasakian, $Q = 3(n+3)$ which is bigger than $2n +2$ in \eqref{b-sobolevinequality}. So CR Sobolev inequality \eqref{b-crsobolev} is not sharp. 
\end{remark}

For $k \in \mathbb{N}$, it is also natural to denote all functions having horizontal covariant derivatives of order $\leq k$ continuous on $M$ by $\Gamma_k (M)$. Its canonical norm is
\begin{align*}
	||f||_{\Gamma_k (M)} = \max_{l \leq k} \sup_M |\nabla_b^l f|.
\end{align*}
This idea is still fit for non-integer $k$ to define an analogous of H\"older space, but the distance is replaced by CC distance. One can refer to \cite{dragomir2006cr,folland1974estimates} for details. As CR Sobolev inequality, CR Sobolev embedding theorem is known by Folland and Stein \cite{folland1974estimates}, but the dependency of the embedding constant is not clear. 
This problem can be overcome by the method in Theorem 7.10 in Page 155 of \cite{gilbarg1977elliptic}. Let's recapture CR Sobolev embedding lemma.

\begin{lemma} \label{c-lem-sobolev-1}
	Suppose that CR Sobolev inequality holds on some CC ball $B_R = B_R (x)$ in a pseudo-Hermitian manifold $(M^{2n+1}, HM, J, \theta)$, that is
	\begin{align} \label{c-sobolev-equ-1}
		\left( \int_{B_R} |\phi|^{\frac{2 Q}{Q-2}} \right)^{\frac{Q-2}{Q}} \leq C_S \int_{B_R} |\nabla_b \phi|^2, \quad \forall \phi \in C^\infty_0 (B_R).
	\end{align}
	Then for any $q > Q$, $S^q_1 (B_R)$-function $u$ with compact support is continuous and
	\constantnumber{cst-embed}
	\begin{align}
		\sup_{B_R} |u| \leq C_{\ref*{cst-embed}} C_S^{\frac{1}{2}} ||\nabla_b u||_{q, B_R} \label{c-sobolev-embed}
	\end{align}
	where $C_{\ref*{cst-embed}} = C_{\ref*{cst-embed}} (q, Q, V (B_R))$.
\end{lemma}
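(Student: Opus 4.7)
The plan is to carry out a Moser iteration modeled on the proof of Theorem 7.10 in \cite{gilbarg1977elliptic}, with the CR Sobolev inequality \eqref{c-sobolev-equ-1} playing the role of its Euclidean counterpart. I would first establish the $L^\infty$ bound for $u \in C^\infty_0(B_R)$; the continuity of a general $u \in S^q_1(B_R)$ with compact support then follows by approximation, since applying the bound to the difference of two smooth approximants shows that any Cauchy sequence in $S^q_1$ is also Cauchy in $L^\infty$, so the limit is uniformly continuous.

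For the iteration, fix $\beta \geq 1$ and apply \eqref{c-sobolev-equ-1} to $|u|^\beta$; then use H\"older with conjugate exponents $q/2$ and $q/(q-2)$ to handle the horizontal gradient term on the right-hand side. This yields
\begin{align*}
    \|u\|_{L^{2\chi\beta}}^{2\beta} \leq C_S \, \beta^2 \, \|u\|_{L^{(2\beta - 2)q/(q-2)}}^{2\beta - 2} \, \|\nabla_b u\|_q^2, \qquad \chi = \frac{Q}{Q-2}.
\end{align*}
I would then set $\beta_0 = 1$ and $\beta_{k+1} = 1 + \mu \beta_k$ with $\mu = Q(q-2)/[q(Q-2)]$. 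The assumption $q > Q$ is equivalent to $\mu > 1$, ensuring that $\beta_k = (\mu^{k+1} - 1)/(\mu - 1)$ grows geometrically and that $\sum_k \beta_k^{-1}$ and $\sum_k \beta_k^{-1} \ln\beta_k$ converge. The recursion makes the exponents on the $L^p$-norms match up as $(2\beta_{k+1} - 2)q/(q-2) = 2\chi \beta_k$, while the base case together with $\int_{B_R} |\nabla_b u|^2 \leq V(B_R)^{1 - 2/q} \|\nabla_b u\|_q^2$ gives $\|u\|_{L^{2\chi}} \leq C_S^{1/2} V(B_R)^{(q-2)/(2q)} \|\nabla_b u\|_q$.

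Writing $\|u\|_{L^{2\chi\beta_k}} = A_k \|\nabla_b u\|_q$, the inequality above becomes the one-step recursion $A_{k+1} \leq (C_S \beta_{k+1}^2)^{1/(2\beta_{k+1})} A_k^{1 - 1/\beta_{k+1}}$. A quick induction using the identity $\tfrac{1}{2\beta_{k+1}} + \tfrac{1}{2}\bigl(1 - \tfrac{1}{\beta_{k+1}}\bigr) = \tfrac{1}{2}$ shows that the power of $C_S$ in $A_k$ stays exactly at $1/2$ for all $k$, and the convergence of $\sum_k \beta_k^{-1}$ and $\sum_k \beta_k^{-1}\ln\beta_k$ bounds the remaining factor uniformly in $k$ by a constant depending only on $q$, $Q$, and $V(B_R)$. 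Letting $k \to \infty$, the exponent $2\chi\beta_k \to \infty$ and $\|u\|_{L^{2\chi\beta_k}} \to \|u\|_\infty$, yielding the advertised estimate \eqref{c-sobolev-embed}. The main technical obstacle is the book-keeping: verifying that the $C_S$ power does not accumulate past $1/2$, and that the residual constants can be controlled independently of $k$, which depends delicately on the strict inequality $q > Q$ that makes $\mu > 1$.
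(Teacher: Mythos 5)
Your proposal is correct and follows essentially the same route as the paper: a Moser iteration in the style of Theorem 7.10 of Gilbarg--Trudinger, using the CR Sobolev inequality on powers of $u$, the H\"older split $\tfrac{1}{2}=\tfrac{1}{q}+\tfrac{q-2}{2q}$ on the gradient term, and book-keeping that keeps the accumulated power of $C_S$ at exactly $\tfrac12$ (the paper achieves this by normalizing $\tilde u = u/\|\nabla_b u\|_{q}$ and using the purely geometric exponent sequence $\gamma=\delta^{\nu}$ with $\delta = Q'/q'$, rather than your affine recursion $\beta_{k+1}=1+\mu\beta_k$, but the two are interchangeable). No gaps.
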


\begin{proof}
	It suffices to prove \eqref{c-sobolev-embed} for $u \in C^\infty_0 (B_R)$. 
	Without loss of generality, we assume $u \neq 0$.
	For any $f \in C^\infty_0 (B_R)$ and $\gamma \geq 1$, CR Sobolev inequality \eqref{c-sobolev-equ-1} and H\"older inequality show that
	\begin{align}
		||f^\gamma||_{Q', B_R} \leq C_S^{\frac{1}{2}} ||\nabla_b f^\gamma||_{2, B_R} = C_S^{\frac{1}{2}} ||\gamma f^{\gamma -1} \nabla_b f||_{2, B_R} \leq C_S^{\frac{1}{2}} \gamma ||f^{\gamma -1}||_{q', B_R} ||\nabla_b f||_{q, B_R} \label{equ-sob1}
	\end{align}
	where $Q' = \frac{2Q}{Q-2}$ and
	$\frac{1}{q'} + \frac{1}{q} = \frac{1}{2}. $
	Let's denote
	\begin{align*}
		\tilde{u} = \frac{u}{||\nabla_b u||_{q, B_R}}
	\end{align*}
	which enjoys the estimate:
	\begin{align}
		||\tilde{u}||_{Q', B_R} \leq C_S^{\frac{1}{2}} V_R^{\frac{1}{q'}}. \label{equ-sob2}
	\end{align}
	Here $V_R = V (B_R)$.
	By \eqref{equ-sob1} and H\"older inequality again, we find
	\begin{align}
		||\tilde{u}||_{\gamma Q', B_R} &\leq C_S^{\frac{1}{2 \gamma}} \gamma^{\frac{1}{\gamma}} ||\tilde{u}||_{(\gamma -1) q', B_R}^{1- \frac{1}{\gamma}} \nonumber \\
		& \leq C_S^{\frac{1}{2 \gamma}} \gamma^{\frac{1}{\gamma}} V_R^{\frac{1}{q' \gamma^2}} ||\tilde{u}||_{\gamma q', B_R}^{1- \frac{1}{\gamma}}
	\end{align}
	Now set $\delta = \frac{Q'}{q'} >1$ and $\gamma = \delta^\nu$. Then using \eqref{equ-sob2}, we have for any $\nu \in \mathbb{N}$
	\begin{align*}
		||\tilde{u}||_{\delta^\nu Q', B_R} \leq C_S^{\frac{1}{2} \delta^{-\nu}} V_R^{\frac{1}{q'} \delta^{- 2 \nu} } \delta^{\nu \delta^{-\nu}} ||\tilde{u}||_{\delta^{\nu-1} Q', B_R }^{1- \delta^{-\nu}} 
		\leq C_S^{\frac{1}{2}} (V_R +1)^{\frac{1}{q'}} \delta^a
	\end{align*}
	where
	\begin{align*}
		a  = \nu \delta^{-\nu} + \sum_{k=0}^{\nu-1} k \delta^{-k} \prod_{i={k+1}}^\nu (1- \delta^{-i}) \leq \sum_{k =1}^\nu k \delta^{-k} < + \infty.
	\end{align*}
	The proof is finished by taking $\nu \to \infty$.
\end{proof}

Let's denote CC diameter and volume of $M$ by $diam_{cc}$ and $\mbox{Vol} (M)$.

\begin{lemma} \label{c-lem-sobolev-2}
	Suppose that $(M^{2n+1}, HM, J, \theta)$ is a closed connected pseudo-Hermitian manifold with 
	\begin{enumerate}[(i)]
		\item $R_* \geq \kappa_1 G_\theta$ and $|A| , |div A| \leq \kappa_2$,
		\item $diam_{cc} \leq d$ and $\mbox{Vol} (M) \geq V_1$.
	\end{enumerate}
	Then for any $q > Q$, any $S^q_1 (M)$ function $u$ is continuous and 
	\constantnumber{cst-embed-2}
	\begin{align}
		\sup_M |u| \leq C_{\ref*{cst-embed-2}} ||u||_{S^q_1}
	\end{align}
	where $C_{\ref*{cst-embed-2}} = C_{\ref*{cst-embed-2}} (n, \kappa_1, \kappa_2, d, V_1, q)$.
\end{lemma}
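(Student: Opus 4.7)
The strategy is a local cutoff argument that reduces the statement to Lemma \ref{c-lem-sobolev-1}. For each $x \in M$, I would multiply $u$ by a cutoff function $\eta$ that equals $1$ at $x$ and is supported in a small CC ball $B_{r_0}(x)$, then apply Lemma \ref{c-lem-sobolev-1} to $\eta u$ on $B_{r_0}(x)$ (where this product has compact support) and estimate
\[
|u(x)| = |(\eta u)(x)| \leq \sup_{B_{r_0}(x)} |\eta u| \leq C_{\ref*{cst-embed}} C_S^{1/2} ||\nabla_b(\eta u)||_{q, B_{r_0}(x)}.
\]
Since $|\nabla_b(\eta u)| \leq |\nabla_b \eta|\,|u| + \eta\,|\nabla_b u|$ with $0 \leq \eta \leq 1$ and $|\nabla_b \eta| \leq C/r_0$, this bounds $|u(x)|$ by $(C/r_0)\,C_{\ref*{cst-embed}}\,C_S^{1/2}\,||u||_{S^q_1}$, and taking the supremum over $x$ finishes the argument.

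The cutoff $\eta$ with $\eta(x)=1$, $\mathrm{supp}\,\eta \subset B_{r_0}(x)$, and $|\nabla_b \eta| \leq C/r_0$ can be built from $\psi(d_{cc}(x,\cdot)/r_0)$ for a smooth bump $\psi$ on $[0,\infty)$, using that the CC distance satisfies $|\nabla_b d_{cc}(x,\cdot)|\leq 1$ almost everywhere (with standard regularization if smoothness is required; for the $S^q_1$ argument Lipschitz is enough). The CR Sobolev inequality needed by Lemma \ref{c-lem-sobolev-1} on $B_{r_0}(x)$ is supplied by Lemma \ref{b-crsobolevlem}, whose hypotheses follow from condition (i) of the present lemma; the Sobolev constant $C_S$ then depends on $r_0$, $n$, $\kappa_1$, $\kappa_2$, and $V(B_{r_0}(x))^{-2/Q}$.

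The essential step, where the hypotheses $\mathrm{diam}_{cc}\leq d$ and $\mbox{Vol}(M)\geq V_1$ actually enter, is to pin down a radius $r_0$ and a lower bound $v_0 > 0$ for $V(B_{r_0}(x))$, uniform in $x$, depending only on $n,\kappa_1,\kappa_2,d,V_1$. To ensure $B_{r_0}(x) \neq M$ (needed so that $\partial B_{r_0}(x) \neq \emptyset$ in Lemma \ref{b-crsobolevlem}), I would use the CR Bishop-type upper volume comparison available under the standing curvature and torsion bounds: there is an upper envelope $\Phi(r_0,n,\kappa_1,\kappa_2)$ for $V(B_{r_0}(x))$, so taking $r_0$ small enough that $\Phi(r_0)<V_1$ forces $V(B_{r_0}(x))<V(M)$. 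For the lower bound $v_0$, one iterates the CR volume doubling of \cite{chang2017pseudo} downward from some scale $r>d$, at which $B_r(x)=M$ and thus $V(B_r(x))=V(M)\geq V_1$; this produces an estimate of the form $V(B_{r_0}(x)) \geq V_1 (r_0/r)^{c}$ with $c=c(n,\kappa_1,\kappa_2)$. This calibration of $r_0$ and $v_0$ against the global hypotheses is the main obstacle; once $(r_0,v_0)$ are fixed the cutoff estimate above yields the claimed inequality with $C_{\ref*{cst-embed-2}}=C_{\ref*{cst-embed-2}}(n,\kappa_1,\kappa_2,d,V_1,q)$.
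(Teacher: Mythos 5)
Your overall architecture is the same as the paper's: cut off $u$ near $x$, apply Lemma \ref{c-lem-sobolev-1} to $\eta u$ on a CC ball where Lemma \ref{b-crsobolevlem} supplies the Sobolev constant, and absorb the cutoff gradient into $\|u\|_{S^q_1}$. The difference, and the place where your argument has a genuine gap, is the calibration of the ball. You fix a radius $r_0$ in advance and then need, uniformly in $x$, both an \emph{upper} bound $V(B_{r_0}(x))\leq \Phi(r_0)<V_1$ (to force $\partial B_{r_0}(x)\neq\emptyset$, and also to control the constant of Lemma \ref{c-lem-sobolev-1}, which grows with $V(B_R)$) and a lower bound $v_0\leq V(B_{r_0}(x))$ (for the factor $V(B_{r_0}(x))^{-2/Q}$ in $C_S$). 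The lower bound via downward iteration of the CR volume doubling of \cite{chang2017pseudo} is fine. The upper bound is not: the hypotheses here are $R_*\geq\kappa_1 G_\theta$ and $|A|,|\mathrm{div}\,A|\leq\kappa_2$, which (as the introduction of the paper stresses) do \emph{not} imply a lower bound on the Riemannian Ricci curvature, so the Riemannian Bishop comparison is unavailable; and the sub-Riemannian results being cited give relative (doubling-type) volume comparisons, not an absolute envelope $V(B_r(x))\leq\Phi(r)$ independent of $\mathrm{Vol}(M)$. Since no upper bound on $\mathrm{Vol}(M)$ is assumed, this step does not close.

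The paper's device avoids the issue entirely: for each $x$ choose $R=R(x)$ by the condition $V(B_R(x))=\tfrac{V_1}{2}$, which exists by continuity of $r\mapsto V(B_r(x))$ because $V(M)\geq V_1>\tfrac{V_1}{2}$. This automatically gives $B_R(x)\neq M$, hence $\partial B_R(x)\neq\emptyset$, pins the ball volume to a fixed value (handling both the $V^{-2/Q}$ factor in $C_S$ and the volume dependence of the embedding constant), and satisfies $R\leq d$. The only price is that $R(x)$ has no lower bound, so your prefactor $C/r_0$ becomes $C/R$; but this is harmless because $C_S^{1/2}\sim R\,e^{CR^2/2}(V_1/2)^{-1/Q}$ carries a compensating factor of $R$, so $C_S^{1/2}(1+C/R)$ stays bounded in terms of $d$ and $V_1$ alone. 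If you replace your fixed-$r_0$ calibration with this volume-normalized choice of radius, the rest of your argument goes through verbatim.
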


\begin{proof}
	For any $x \in M$, we can choose a CC ball $B_R (x)$ with boundary such that $V(B_R(x)) = \frac{V_1}{2}$. Then by Lemma \ref{b-crsobolevlem}, the CR Sobolev inequality condition holds on $B_R(x)$ with
	\begin{align*}
		C_S = C R^2 e^{C R^2} V (B_R (x))^{-\frac{2}{Q}} \leq C R^2 e^{C d^2} \left( \frac{V_1}{2} \right)^{-\frac{2}{Q}}
	\end{align*}
	Let's choose a cutoff function $\eta$ such that
	\constantnumber{cutoff}
	\begin{align*}
		\eta \big|_{B_{\frac{R}{2}} (x)} \equiv 1 , \quad supp \: \eta  \subset B_{R} (x), \quad | \nabla_b \eta | \leq \frac{C_{\ref*{cutoff}}}{R},
	\end{align*}
	where $C_{\ref*{cutoff}}$ is a universal constant. 
	Then $\eta u$ has compact support in $B_R (x)$ and 
	\begin{align*}
		||\nabla_b (\eta u) ||_{q, B_R (x)} \leq ||\nabla_b u||_{q, M} + \frac{C_{\ref*{cutoff}}}{R} ||u||_{q, M} \leq (1 + \frac{C_{\ref*{cutoff}}}{R}) ||u||_{S^q_1 (M)}.
	\end{align*}
	Hence Lemma \ref{c-lem-sobolev-1} shows that $u$ is continuous at $x$ and
	\begin{align*}
		|u(x)| 
		\leq C_{\ref*{cst-embed}} C_S^{\frac{1}{2}} ||\nabla_b (\eta u)||_{q, B_R(x)} 
	\end{align*}
	which yields the conclusion.
\end{proof}

Next we use Moser iteration to estimate $L^\infty$ norm of the solution of 
\begin{align*}
\Delta_b f + \phi f + \psi \geq 0.
\end{align*}

\begin{lemma} \label{c-moser-1}
	Suppose that CR Sobolev inequality holds on some CC ball $B_R$ in a pseudo-Hermitian manifold $(M^{2n+1}, HM, J, \theta)$, that is
	\begin{align}
		\left( \int_{B_R} |\phi|^{\frac{2Q}{Q-2}} \right)^{\frac{Q-2}{Q}} \leq C_S  \int_{B_R} |\nabla_b \phi|^2, \quad \forall \phi \in C^\infty_0 (B_R)  \label{c-crsobolev}
	\end{align}
	Assume that $\phi, \psi \in L^\frac{q}{2} (B_R) $ for some $q > Q$. Then there exists a constant
	\constantnumber{cst-4}
	$C_{\ref*{cst-4}} = C_{\ref*{cst-4}} (n, q, Q)$
	such that
	if $0 \leq f \in Lip (B_R)  $ is a weak solution of 
	\begin{align}
	\Delta_b f + \phi f + \psi \geq 0, \label{c-subinequality}
	\end{align}
	then we have
	\begin{align}
	\sup_{B_{\frac{R}{2}}} |f| \leq C_{\ref*{cst-4}} \left[ \left( ||\phi||_{\frac{q}{2}, B_R} +1 \right)^{\frac{q}{q-Q}} C_S^{\frac{Q}{q-Q}} + 1 \right] C_S R^{-2} (R^2 + 1) \left[ ||f||_{\frac{Q}{2} , B_R} + || \psi ||_{\frac{q}{2} , B_R} V(B_R) \right] .
	\end{align}
\end{lemma}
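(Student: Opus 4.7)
The plan is to run a De Giorgi--Nash--Moser iteration adapted to the subelliptic setting, using the CR Sobolev inequality \eqref{c-crsobolev} in place of the Euclidean one. First I would pass from \eqref{c-subinequality} to a homogeneous inequality by setting $\bar f := f + k$ with $k$ comparable to $\|\psi\|_{q/2, B_R}$, so that
\begin{align*}
\Delta_b \bar f + \tilde\phi \, \bar f \geq 0, \qquad \tilde\phi := \phi + \tfrac{\psi - \phi k}{\bar f}.
\end{align*}
Since $\bar f \geq k$ one gets $\|\tilde\phi\|_{q/2, B_R} \leq C(\|\phi\|_{q/2, B_R}+1)$, so $\psi$ is effectively absorbed into the potential. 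The estimate for $f$ follows from that for $\bar f$ combined with $\|f+k\|_{Q/2, B_R} \leq \|f\|_{Q/2, B_R} + k\, V(B_R)^{2/Q}$, and it suffices to treat the homogeneous case.

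For the Caccioppoli step, given $p \geq Q/2$ and concentric balls $B_{\rho_1} \subset B_{\rho_2} \subset B_R$, pick $\eta \in C^\infty_0(B_{\rho_2})$ with $\eta \equiv 1$ on $B_{\rho_1}$ and $|\nabla_b \eta| \leq 2/(\rho_2-\rho_1)$. Testing the inequality against $\eta^2 \bar f^{p-1}$, integrating by parts, and absorbing the cross term via Cauchy--Schwarz yield, with $u := \bar f^{p/2}$,
\begin{align*}
\int |\nabla_b(\eta u)|^2 \leq Cp \Bigl( \int |\nabla_b \eta|^2 u^2 + \int \tilde\phi \, \eta^2 u^2 \Bigr).
\end{align*}
Applying \eqref{c-crsobolev} to $\eta u$ with $\chi := Q/(Q-2)$ converts this into a lower bound on $\|\eta u\|_{2\chi}^2$.

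The critical step is closing the iteration down to $L^{Q/2}$, which requires controlling the potential term subcritically. I would apply Hölder in $L^{q/2}$, obtaining $\int \tilde\phi \, \eta^2 u^2 \leq \|\tilde\phi\|_{q/2} \, \|\eta u\|_{2q/(q-2)}^2$, and interpolate $L^{2q/(q-2)}$ between $L^2$ and $L^{2\chi}$. Because $q > Q$ the interpolation exponent works out to $\theta = Q/q < 1$, giving
\begin{align*}
\|\eta u\|_{2q/(q-2)}^2 \leq \|\eta u\|_2^{2(q-Q)/q} \, \|\eta u\|_{2\chi}^{2Q/q}.
\end{align*}
Young's inequality with dual exponents $q/Q$ and $q/(q-Q)$ then absorbs the $\|\eta u\|_{2\chi}$ piece into the left-hand side, producing the factor $C(\|\tilde\phi\|_{q/2}+1)^{q/(q-Q)} C_S^{Q/(q-Q)}$ on the $L^2$ term. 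Setting $\Phi := (\|\phi\|_{q/2}+1)^{q/(q-Q)} C_S^{Q/(q-Q)} + 1$, this yields the reverse Sobolev inequality
\begin{align*}
\|\bar f\|_{p\chi, B_{\rho_1}}^p \leq (Cp)^p \Phi^p C_S^p \bigl( (\rho_2-\rho_1)^{-2} + 1 \bigr)^p \|\bar f\|_{p, B_{\rho_2}}^p.
\end{align*}

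Finally I would iterate with $p_j := (Q/2)\chi^j$ and radii $\rho_j := (1 + 2^{-j})R/2$ shrinking from $R$ to $R/2$. Taking $p_j$-th roots, the summability $\sum j\chi^{-j} < \infty$ makes $\prod_j (Cp_j)^{1/p_j}$ finite, while $\sum \chi^{-j}((\rho_j-\rho_{j+1})^{-2}+1) \lesssim R^{-2}(R^2+1)$ produces the stated dependence on $R$. Passing to $j\to\infty$ and translating from $\bar f$ back to $f$ gives the stated bound. The main technical hurdle is the third step: the exponent identities $\theta = Q/q$ (from interpolation) and $q/(q-Q)$ (the Young dual of $q/Q$) are exactly what make the $\phi$-term subcritical under $q > Q$ and dictate the precise form of the prefactor. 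When $q = Q$ the interpolation exponent collapses to $1$ and absorption fails, which is why the hypothesis $q > Q$ is essential and sharp at the level of this argument.
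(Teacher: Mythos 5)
Your proposal is correct and follows essentially the same route as the paper's proof: add $k=\|\psi\|_{q/2,B_R}$ to homogenize, test with $\eta^2 u^{p-1}$, apply the CR Sobolev inequality, control the potential term by H\"older--interpolation--Young with exponent $\mu=Q/(q-Q)$ (so the prefactor $(\|\phi\|_{q/2}+1)^{q/(q-Q)}C_S^{Q/(q-Q)}$ matches the paper's $C_\phi C_S^{\mu}$), and iterate from $p_0=Q/2$ over radii shrinking from $R$ to $R/2$. The only cosmetic difference is that you fold $\psi$ into a modified potential $\tilde\phi$ rather than carrying $|\phi|+|\psi|/k$ explicitly, which changes nothing of substance.
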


\begin{proof}
	Without loss of generality, we assume that $\psi \neq 0$.
	Let $\eta$ be any cutoff function on $B_R$ and 
	$$u = f +k  \quad \mbox{where} \quad k = || \psi||_{\frac{q}{2}, B_t} . $$
	Test \eqref{c-subinequality} by $\eta^2 u^\alpha$ for $\alpha \geq 1$. Then we get
	\begin{align*}
	\alpha \int \eta^2 u^{\alpha-1} | \nabla_b u |^2 + \int 2 \eta f^\alpha \la \nabla_b \eta, \nabla_b u  \ra \leq \int \eta^2 u^\alpha |\psi| + \int \eta^2 u^{\alpha+1} |\phi|.
	\end{align*}
	which implies that
	\begin{align*}
	\int | \nabla_b ( \eta u^{\frac{\alpha+1}{2}} ) |^2   \leq \int u^{\alpha +1} |\nabla_b \eta|^2 + \alpha \int \eta^2 u^{\alpha + 1} (|\phi| + \frac{|\psi|}{k})
	\end{align*}
	By the assumption of CR Sobolev inequality \eqref{c-crsobolev}, we have 
	\begin{align} \label{c-1}
	\int \left(  | \eta u^{\frac{\alpha+1}{2} } |^{\frac{2Q}{Q-2}} \right)^{\frac{Q-2}{Q}} \leq  C_S \alpha \int \eta^2 u^{\alpha + 1} (|\phi| + \frac{|\psi|}{k}) + C_S \int u^{\alpha +1}  | \nabla_b \eta |^2 .
	\end{align}
	Taking account of H\"older inequality and interpolation inequality, we find
	\begin{align*}
	\int \eta^2 u^{\alpha + 1} (|\phi| + \frac{|\psi|}{k}) 
	& \leq  (|| \phi ||_{\frac{q}{2} , B_R}  + 1) || \eta u^{\frac{\alpha+1}{2} } ||_{\frac{2q}{q-2}}^2 \\
	& \leq 2 (|| \phi ||_{\frac{q}{2} , B_R}  + 1) \left( \epsilon^2 || \eta u^{\frac{\alpha+1}{2}} ||^2_{\frac{2Q}{Q-2}} + \epsilon^{-2 \mu} || \eta u^{\frac{\alpha+1}{2}} ||^2_{2} \right)
	\end{align*}
	where $\mu= \frac{Q}{q-Q}$.
	By choosing $\epsilon^2 =  4^{-1} C_S^{-1} \alpha^{-1} ( || \phi ||_{\frac{q}{2} , B_R}  + 1 )^{-1}$,
	then \eqref{c-1} becomes 
	\begin{align} \label{c-2}
	\int \left(  | \eta u^{\frac{\alpha+1}{2} } |^{\frac{2Q}{Q-2}} \right)^{\frac{Q-2}{Q}} & \leq  2^{\mu+2} (||\phi||_{\frac{q}{2}, B_t} +1)^{\mu + 1} C_S^{\mu + 1} \alpha^{\mu + 1} \int \eta^2 u^{\alpha + 1} + 2 C_S \int u^{\alpha + 1} |\nabla_b \eta|^2 \\
	& = 2 C_S (C_\phi C_S^\mu \alpha^{\mu+1} + 1) \int u^{\alpha + 1} \left( \eta^2 + |\nabla_b \eta|^2 \right) \nonumber
	\end{align}
	where $C_\phi = 2^{2 \mu + 1} (||\phi||_{\frac{q}{2}, B_R} +1)^{\mu + 1}  $.
	Let $\frac{R}{2} \leq r_2 < r_1 \leq R$ and choose the cutoff function $\eta$ satisfying 
	\begin{align*}
	\eta \big|_{B_{r_2}} \equiv 1 , \quad supp \: \eta  \subset B_{r_1}, \quad | \nabla_b \eta | \leq \frac{C_{\ref*{cutoff}}}{r_1 - r_2},
	\end{align*}
	where $C_{\ref*{cutoff}}$ is a universal constant.
	Apply it in \eqref{c-2}, the result is 
	\begin{align} \label{c-estimate-1}
		\int_{B_{r_1}} \left(  | \eta u^{\frac{\alpha+1}{2} } |^{\frac{2Q}{Q-2}} \right)^{\frac{Q-2}{Q}}  \leq 2 C_S (C_\phi C_S^\mu  + 1) (R^2 + C_{\ref*{cutoff}}^2) \frac{\alpha^{\mu+1}}{(r_1 - r_2)^2} \int_{B_{r_1}} u^{\alpha + 1}.
	\end{align}
	Denote
	\constantnumber{cst-5}
	\begin{align}
		C_{\ref*{cst-5}} = 2 C_S (C_\phi C_S^\mu  + 1) (R^2 + C_{\ref*{cutoff}}^2)
	\end{align}
	and define
	\begin{align*}
	T(p, r) = \left( \int_{B_r} u^p \right)^{\frac{1}{p}}.
	\end{align*}
	Then \eqref{c-estimate-1} can be rewritten as
	\begin{align} \label{c-3}
	T ( \chi p, r_2 ) \leq \left( \frac{C_{\ref*{cst-5}} p^{\mu +1} }{(r_1 - r_2)^2} \right)^\frac{1}{p} T (p , r_1), \quad  \mbox{for } p \geq 2
	\end{align} 
	where $ \chi= \frac{Q}{Q -2} $. Now let's use Moser iteration. 
	By taking
	$$ p_0 = \frac{Q}{2} \geq 2, \ p_m = \chi^m p_0,  \  R_m = \frac{R}{2} + 2^{-m-1} R $$
	\eqref{c-3} will lead the following process
	\constantnumber{cst-6}
	\begin{align*}
	T( \chi^{m+1} p_0, R_{m+1} ) \leq &  C_{\ref*{cst-5}}^{\frac{1}{\chi^m p_0}}  p_0^{\frac{\mu+1}{\chi^m p_0}} \chi^{\frac{m(\mu+1)}{\chi^m p_0}}  \cdot 4^{ - \frac{m+2}{\chi^m p_0} } R^{- \frac{2}{\chi^m p_0}} T (\chi^m p_0 , R_m) \numberthis \label{c-estimate-2} \\
	\leq & C_{\ref*{cst-5}}^{\sum_0^m \frac{1}{\chi^k p_0}}  p_0^{\sum_0^m \frac{\mu+1}{\chi^k p_0}} \chi^{\sum_0^m \frac{k(\mu+1)}{\chi^k p_0}}  \cdot 4^{ \sum_0^m - \frac{k+2}{\chi^k p_0} } R^{ \sum_0^m - \frac{2}{\chi^k p_0}} T (  p_0 , R_0) \\
	\leq & C_{\ref*{cst-5}} C_{\ref*{cst-6}} R^{- 2} T (p_0 , R_0) 
	\end{align*}
	where $C_{\ref*{cst-6}} = C_{\ref*{cst-6}} (q, Q)$.
	Letting $m \to \infty$ and using expression of $C_{\ref*{cst-5}}$, we can finish the proof. 
\end{proof}

\begin{lemma} \label{c-moser}
	Suppose that $(M^{2n+1}, HM, J, \theta)$ is a closed connected pseudo-Hermitian manifold with 
	\begin{enumerate}[(i)]
		\item $R_* \geq \kappa_1 G_\theta$ and $|A| , |div A| \leq \kappa_2$,
		\item $diam_{cc} \leq d$ and $\mbox{Vol} (M) \geq V_1$.
	\end{enumerate}
	Assume that $\phi, \psi \in L^\frac{q}{2} (M) $ for some $q > Q$.
	Then there exists a constant 
	\constantnumber{cst-3}
	$$C_{\ref*{cst-3}} = C_{\ref*{cst-3}} (n, \kappa_1, \kappa_2, d, V_1, q, ||\phi||_{\frac{q}{2}})$$ such that if $0 \leq f \in Lip (M)$ is a weak solution of
	\begin{align*}
		\Delta_b f + \phi f + \psi \geq 0
	\end{align*}
	then we have
	\begin{align}
		\sup_M |f| \leq C_{\ref*{cst-3}} \big( ||f||_{\frac{Q}{2} , M} + || \psi ||_{\frac{q}{2} , M} \big).
	\end{align}
\end{lemma}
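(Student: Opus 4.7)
I would prove Lemma \ref{c-moser} by globalizing the local Moser iteration estimate of Lemma \ref{c-moser-1} through the same covering/cutoff argument already used to derive Lemma \ref{c-lem-sobolev-2} from Lemma \ref{c-lem-sobolev-1}. The key observation is that the hypotheses (i)--(ii) supply both a local CR Sobolev inequality (via Lemma \ref{b-crsobolevlem}) on every sufficiently large CC ball and, through the CR volume doubling of \cite{chang2017pseudo}, uniform control of the radii of such balls.

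Concretely, fix an arbitrary $x \in M$. Since the map $r \mapsto V(B_r(x))$ is continuous with $V(B_0(x)) = 0$ and $V(B_d(x)) = \mathrm{Vol}(M) \geq V_1$, I can select a radius $R = R(x) \in (0,d]$ such that $V(B_R(x)) = V_1/2$. This choice forces $\partial B_R(x) \neq \emptyset$, so Lemma \ref{b-crsobolevlem} furnishes a CR Sobolev inequality on $B_R(x)$ with Sobolev constant
\begin{align*}
	C_S \leq C R^2 e^{C R^2} (V_1/2)^{-2/Q}
\end{align*}
depending only on $(n,\kappa_1,\kappa_2,d,V_1)$. Applying Lemma \ref{c-moser-1} on $B_R(x)$ and bounding $\|f\|_{Q/2,B_R(x)} \leq \|f\|_{Q/2,M}$, $\|\psi\|_{q/2,B_R(x)} \leq \|\psi\|_{q/2,M}$, $\|\phi\|_{q/2,B_R(x)} \leq \|\phi\|_{q/2,M}$ together with $V(B_R(x)) = V_1/2$, one obtains
\begin{align*}
	|f(x)| \leq \sup_{B_{R/2}(x)} |f| \leq \tilde{C}\, \bigl( \|f\|_{Q/2,M} + \|\psi\|_{q/2,M} \bigr),
\end{align*}
where $\tilde{C}$ depends on $C_S$, $R$, $V_1$, $q$, $Q$ and $\|\phi\|_{q/2,M}$. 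Since $x$ is arbitrary, taking the supremum concludes the proof.

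The main obstacle is ensuring that $\tilde{C}$ depends only on the allowed parameters $(n,\kappa_1,\kappa_2,d,V_1,q,\|\phi\|_{q/2})$ and not on the point-dependent radius $R(x)$, since the estimate of Lemma \ref{c-moser-1} contains a factor $R^{-2}$ coming from the gradient of the cutoff. The upper bound $R \leq d$ is immediate from the diameter assumption. For the lower bound $R \geq r_0(n,\kappa_1,\kappa_2,d,V_1) > 0$ I would invoke the CR volume doubling property (and its consequent volume comparison) established in \cite{chang2017pseudo}: iterated downward doubling from the ball $B_d(x) = M$ forces $V(B_R(x)) \geq c (R/d)^{\log_2 C_D}\, \mathrm{Vol}(M)$, so the equation $V(B_R(x)) = V_1/2$ together with $\mathrm{Vol}(M) \geq V_1$ yields the desired $R \geq r_0 d$. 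This is precisely the point that is left implicit in the proof of Lemma \ref{c-lem-sobolev-2}, and once secured here it allows all remaining constants to collapse into the single constant $C_{\ref*{cst-3}}$ of the required form.
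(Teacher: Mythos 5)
Your proposal follows essentially the same route as the paper: at each point choose a CC ball of volume $V_1/2$, invoke Lemma \ref{b-crsobolevlem} to get a uniform Sobolev constant $C_S \leq C d^2 e^{C d^2} (V_1/2)^{-2/Q}$, and apply the local Moser estimate of Lemma \ref{c-moser-1}. The only addition is your explicit lower bound on the radius $R(x)$ via CR volume doubling, which the paper leaves implicit but which is indeed needed to control the $R^{-2}$ factor in the conclusion of Lemma \ref{c-moser-1}.
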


\begin{proof}
	For any point $x \in M$, there is a CC ball $B_R (x)$ such that $V(B_R(x)) = \frac{V_1}{2}$ which implies that $\partial B_R (x) \neq \emptyset$. Moreover, Lemma \ref{b-crsobolevlem} guarantees CR Sobolev inequality on $B_R (x)$ with 
	\begin{align*}
		C_S = C R^2 e^{C R^2} V (B_R (x))^{-\frac{2}{Q}} \leq C R^2 e^{C d^2} \left( \frac{V_1}{2} \right)^{-\frac{2}{Q}}.
	\end{align*}
	Then the conclusion follows from Lemma \ref{c-moser-1}.
\end{proof}

\section{Convergence of pseudo-Einstein manifolds} \label{sec-higherregularity}

In Riemannian geometry, a sequence $(M_i, g_i)$ of compact Riemannian manifolds $C^{k, \alpha}$ converges to a compact Riemannian manifold $(M,g)$ if there is a sequence of diffeomorphisms $\phi : M \to M_i$ such that $\phi^* g_i \to g$ in $C^{k, \alpha}$. By Cheeger's Lemma (cf. Lemma 51 of Chapter 10 in \cite{petersen2006riemannian}) and Peters' method in \cite{peters1987convergence}, we use the induction and get the following regularity theorem for convergence.

\begin{theorem} \label{b-riemsmoothconvergence}
	For constants $\Lambda, V_1, d> 0$ and nonnegative integer $k$, the space of all closed Riemannian manifolds with
	\begin{enumerate}[(1)]
		\item $|D^m \hat{R}| \leq \Lambda$ for $m =0, 1, 2, \dots, k, $
		\item $\mbox{Vol} \geq V_1, $
		\item $\mbox{diam}_{Riem} \leq d, $
	\end{enumerate}
	is $C^{k +1 ,\alpha}$ precompact for any $\alpha \in (0, 1)$.
\end{theorem}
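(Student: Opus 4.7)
My plan is to prove Theorem \ref{b-riemsmoothconvergence} by induction on $k$, combining Cheeger's injectivity radius lemma, harmonic coordinates, and an elliptic bootstrap on the equation satisfied by the metric components in such coordinates, together with Peters' patching construction to produce global diffeomorphisms.

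For the base case $k=0$, the hypotheses $|\hat R|\le\Lambda$, $\mathrm{Vol}\ge V_1$ and $\mathrm{diam}_{Riem}\le d$ put us squarely in the setting of Cheeger's lemma, which yields a uniform lower bound $i_0>0$ on the injectivity radius. At every point there is then a harmonic coordinate chart of a definite fixed size $r_0=r_0(\Lambda,V_1,d,\alpha)>0$ on which $g_{ij}$ is uniformly equivalent to the Euclidean metric. In such charts the metric satisfies the quasilinear elliptic system $-\tfrac{1}{2}g^{kl}\partial_k\partial_l g_{ij}=\hat{Ric}_{ij}+Q(g,\partial g)$, where $Q$ is quadratic in $\partial g$ with coefficients depending smoothly on $g$. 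Since $|\hat{Ric}|\le C(n)\Lambda$, Schauder/$L^p$ theory upgrades the a priori $C^0$ control of $g_{ij}$ to a uniform $C^{1,\alpha}$ bound. Arzel\`a--Ascoli then gives a subsequence converging in $C^{1,\alpha}$ on each chart.

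The inductive step assumes the statement through level $k-1$, so we may pass to a $C^{k,\alpha}$-convergent subsequence. Differentiating the harmonic-coordinate equation $m$ times produces a linear elliptic equation for $\partial^m g_{ij}$ whose right-hand side involves $\partial^m\hat{Ric}$ together with products of lower order derivatives of $g$. Covariant derivatives and coordinate derivatives of $\hat R$ differ by terms involving $\partial^j g$ for $j<m$, which are already controlled in $C^{\alpha}$; thus the bound $|D^m\hat R|\le\Lambda$ for $m\le k$ translates, via the inductive hypothesis on $g$, into a $C^{k-1,\alpha}$ bound on the right-hand side. Applying Schauder estimates to this linearized system then yields a uniform $C^{k+1,\alpha}$ bound on $g_{ij}$ in harmonic coordinates, and Arzel\`a--Ascoli produces the $C^{k+1,\alpha}$ convergence on each chart.

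To promote local convergence to convergence of the manifolds, I follow Peters' argument: cover each $M_i$ by a controlled number of harmonic charts centered at a maximal $\tfrac{r_0}{2}$-separated net, with uniformly bounded multiplicity and uniform $C^{k+1,\alpha}$ control of all transition maps (because the transition maps are themselves harmonic and their Jacobians are controlled by the metric bounds). Passing to a subsequence where the net structure stabilizes, glue the limiting charts into an abstract smooth manifold $M$ with a $C^{k,\alpha}$ metric $g$, and use a partition-of-unity center-of-mass (or direct transition-map) construction to build diffeomorphisms $\phi_i:M\to M_i$ with $\phi_i^*g_i\to g$ in $C^{k+1,\alpha}$. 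The main subtlety I expect is this patching step: keeping the transition functions and the resulting diffeomorphisms in $C^{k+2}$-type control so that the pulled-back metrics converge in the full $C^{k+1,\alpha}$ norm rather than merely chart-by-chart, and showing that the diffeomorphism construction does not lose a derivative. The elliptic bootstrap itself is routine once the harmonic radius is uniformly positive.
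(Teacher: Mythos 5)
Your proposal is correct and follows essentially the same route the paper indicates: the paper gives no detailed proof, simply invoking Cheeger's Lemma, Peters' method, and induction on $k$, which is precisely the Cheeger injectivity-radius bound, harmonic-coordinate elliptic bootstrap, and Peters patching argument you spell out.
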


We want to generalize it to pseudo-Hermitian manifolds.
\begin{definition}
	A sequence of closed pseudo-Hermitian manifolds $(M_i, H M_i, J_i, \theta_i)$ is called $C^{k, \alpha}$ convergent if there are a manifold $M$, two tensors $\theta \in C^{k,\alpha} (TM , \mathbb{R}), J \in C^{k, \alpha} (TM, TM)$ and diffeomorphisms $\phi_i : M \to M_i$ such that
	\begin{align}
		\phi_i^* \theta_i \to \theta, \quad \phi_i^* J_i \to J \quad \mbox{ in $C^{k,\alpha}$ topology}. 
	\end{align}
\end{definition}

Let's first discuss the convergence of pseudo-Hermitian structures and almost complex structures which is based on the identities 
\begin{align}
	D_X \theta (Y) &= d \theta(X, Y) + A (X, Y) = g(JX, Y) + A (X, Y) , \label{c-levi3} \\
	D_X J (Y) & = - g (X, Y) \xi - A (X, J Y) \xi - \theta (Y)  J \tau (X) + \theta (Y) X , \label{c-levi4}
\end{align}
for any $X, Y \in \Gamma (TM)$, due to \eqref{b-con} with $\nabla \theta = 0 $ and $ \nabla J =0$.

\begin{lemma} \label{c-thm-riemcur}
	Given constants $\lambda, V_1, d$, $\lambda_0, \dots, \lambda_{k+1}$ and $\Lambda_0, \dots, \Lambda_k$ for $k \geq 0$, any sequence of closed pseudo-Hermitian $(2n+1)$-manifolds with
	\begin{enumerate}[(1)]
		\item $|D^m \hat{R}| \leq \Lambda_m$ for $m = 0, 1, 2 \dots, k$, \label{c-condition3}
		\item $|D^m A| \leq \lambda_m$ for $m = 0, 1, 2 \dots, k +1$, \label{c-condition4}
		\item $\mbox{diam}_{cc} \leq d$,
		\item $\mbox{Vol} \geq V_1$, \label{c-condition5}
	\end{enumerate}
	where $D$ is the Levi-Civita connection associated with Webster metric, is $C^{k+1, \alpha}$ sub-convergent for any $\alpha \in (0,1)$.
\end{lemma}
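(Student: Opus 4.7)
The plan is to reduce to the Riemannian compactness theorem \ref{b-riemsmoothconvergence} applied to the Webster metrics, and then bootstrap the regularity of $\theta$ and $J$ through the structural identities \eqref{c-levi3} and \eqref{c-levi4}.

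\emph{Webster metric convergence.} Since $\mathrm{diam}_{\mathrm{Riem}} \leq \mathrm{diam}_{cc} \leq d$, and the curvature and volume bounds $|D^m \hat R| \leq \Lambda_m$ (for $m \leq k$), $\mathrm{Vol}(M_i) \geq V_1$ are given, Theorem~\ref{b-riemsmoothconvergence} yields, after passing to a subsequence, a smooth manifold $M$, diffeomorphisms $\phi_i : M \to M_i$, and a metric $g \in C^{k+1,\alpha}$ on $M$ with $\tilde g_i := \phi_i^* g_{\theta_i} \to g$ in $C^{k+1,\alpha}$ for any $\alpha \in (0,1)$.

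\emph{Bootstrap for $\theta$ and $J$.} Write $\tilde\theta_i = \phi_i^* \theta_i$, $\tilde J_i = \phi_i^* J_i$, $\tilde A_i = \phi_i^* A_i$, and let $\tilde\xi_i$ denote the Reeb field of $\tilde\theta_i$. The normalizations $|\theta_i|_{g_{\theta_i}} = 1$ and $|J_i|^2_{g_{\theta_i}} = 2n$ give uniform $C^0$ bounds on $\tilde\theta_i$ and $\tilde J_i$. In harmonic coordinates adapted to $g$, the pulled-back identities \eqref{c-levi3} and \eqref{c-levi4} read schematically
\begin{align*}
\partial \tilde\theta_i &= \Gamma(\tilde g_i) \ast \tilde\theta_i + F_1(\tilde g_i, \tilde J_i, \tilde A_i), \\
\partial \tilde J_i &= \Gamma(\tilde g_i) \ast \tilde J_i + F_2(\tilde g_i, \tilde J_i, \tilde A_i, \tilde\theta_i, \tilde\xi_i),
\end{align*}
with $F_1, F_2$ smooth algebraic in their arguments, and $\tilde\xi_i$ algebraically determined by $\tilde\theta_i$ through $\tilde\xi_i \lrcorner \tilde\theta_i = 1$, $\tilde\xi_i \lrcorner d\tilde\theta_i = 0$. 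The $C^0$ control on the right-hand side produces a Lipschitz bound on $\tilde\theta_i, \tilde J_i$, so Arzel\`a--Ascoli delivers a subsequence converging in $C^{0,\alpha}$. Inductively, if $\tilde\theta_i, \tilde J_i$ are uniformly bounded in $C^{\ell,\alpha}$ for some $0 \leq \ell \leq k$, then the first equation upgrades $\tilde\theta_i$ to $C^{\ell+1,\alpha}$ (using $\tilde g_i \in C^{k+1,\alpha}$ and the coordinate bounds on $\tilde A_i$ produced by $|D^m A| \leq \lambda_m$ for $m \leq k+1$); Cramer's rule then yields a $C^{\ell,\alpha}$ bound on $\tilde\xi_i$; and the second equation upgrades $\tilde J_i$ to $C^{\ell+1,\alpha}$. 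Iterating up to $\ell = k$ and extracting a diagonal subsequence produces $\tilde\theta_i \to \theta$ and $\tilde J_i \to J$ in $C^{k+1,\alpha}$.

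\emph{Main obstacle.} The principal technical difficulty is the coupling of \eqref{c-levi3} and \eqref{c-levi4}: $DJ$ depends on $\theta$ and $\xi$, and $\xi$ is only implicitly determined by $\theta$ and $d\theta$. The bootstrap must therefore be ordered carefully at each stage --- upgrade $\theta$ first, recover $\xi$ algebraically, and only then upgrade $J$. This is legitimate because $\theta_i \wedge (d\theta_i)^n$ is a non-vanishing volume form with bounds inherited from $\tilde g_i$, so the linear system for $\tilde\xi_i$ is uniformly invertible; verifying that Cramer's rule preserves H\"older regularity uniformly in $i$ is the step requiring the most care. A secondary, routine issue is converting the covariant-derivative hypotheses on $A$ and $\hat R$ into coordinate-wise bounds, which the harmonic chart of $g$ handles uniformly in $i$.
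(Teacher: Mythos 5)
Your proposal is correct and follows essentially the same route as the paper: apply Theorem \ref{b-riemsmoothconvergence} to the Webster metrics to get $C^{k+1,\alpha}$ convergence of $g_{\theta_i}$, then bootstrap the components of $\theta_i$ and $J_i$ through the local expressions of \eqref{c-levi3} and \eqref{c-levi4}. The one point you overcomplicate is the recovery of the Reeb field: since $g_{\theta}(\xi,\cdot)=\theta(\cdot)$, one has $\xi_i^a = g_i^{ab}\,(\theta_i)_b$, so $\xi_i$ is the plain metric dual of $\theta_i$ and inherits its regularity at once --- no Cramer's rule on the system $\xi\lrcorner\theta=1$, $\xi\lrcorner d\theta=0$ and no careful ordering of the bootstrap is needed, since the right-hand sides of both structural identities are zeroth order in $\theta_i$ and $J_i$ and the paper upgrades them simultaneously.
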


\begin{proof}
	Due to the relation of Riemannian distance and CC distance, Cheeger's finiteness theorem shows that closed pseudo-Hermitian manifolds satisfying the assumptions \eqref{c-condition3}-\eqref{c-condition5} have finite differentiable structures. Hence it suffices to prove the sub-convergence of closed pseudo-Hermitian manifolds $(M, H_i M, J_i, \theta_i)$ under the conditions \eqref{c-condition3}-\eqref{c-condition5}.
	Let $(U, x^a)$ be a coordinate chart of $M$. Then we can set
	\begin{align*}
		\theta_i = \phi_{i; a} d x^a, J_i = J_{i; a}^{b} d x^a \otimes \frac{\partial}{\partial x^b}, g_{\theta_i} = g_{i; ab} d x^a \otimes d x^b, A_i = A_{i; ab} d x^a \otimes d x^b.
	\end{align*}
	Moreover, the Christoffel symbols of the Levi-Civita connection about $g_{\theta_i}$ is denoted by $\Gamma_{i; ab}^c$. It is obvious that the components of $\xi_i$ satisfies $\xi_{i}^a = g_i^{a b} \phi_{i; b}$ where $(g_i^{a b})$ is the inverse of $(g_{i; ab})$.

	By assumptions, Theorem \ref{b-riemsmoothconvergence} guarantees the $C^{k+1, \alpha}$ convergence of Riemannian manifolds $(M, g_{\theta_i})$ by taking subsequence which is also denoted by itself. Hence for any $\beta \in (\alpha, 1)$, $g_{i; ab}$ have uniformly $C^{k+1, \beta}$ bound and then $\Gamma_{i; ab}^c$ have uniformly $C^{k, \beta}$ bound. Moreover, $\phi_{i; a} $ and $ J_{i; a}^b$ are uniformly bound due to $|\theta_{i}|_{g_{\theta_i}} = 1$ and $|J_{i}|_{g_{\theta_i}} = 2n$. The identities \eqref{c-levi3} and \eqref{c-levi4} have the following local expressions:
	\begin{align*}
		\frac{\partial \phi_{i; a}}{\partial x^b} &= \Gamma_{i; b a}^c \phi_{i; c} + J_{i; b}^c g_{i; c a} + A_{i ; ab} \\
		\frac{\partial J_{i; a}^b}{\partial x^c} & =\Gamma_{i; ca}^d J_{i; d}^b - J_{i; a}^d \Gamma_{i; cd}^b - g_{i; ca} g_{i}^{db} \phi_d - J_{i;a}^d A_{i;cd} g_i^{eb} \phi_{i; e} - J_{i; d}^b A_{i; ce} g_i^{de} \phi_{i; a} + \phi_{i;a} \delta_c^b
	\end{align*}
	which makes $\phi_{i; a}$ and $J_{i; a}^b$ uniformly $C^1$ bound and thus $C^{0, \beta}$ bound. By induction, one can easily show that $\phi_{i; a}$ and $J_{i; a}^b$ are uniformly bounded in $C^{k+1, \beta}$ which is compact in $C^{k+1, \alpha}$. By choosing a finite cover of $M$ and taking subsequence of $\theta_i, J_i$, there are $\theta \in C^{k+1, \alpha} (M, T^*M)$ and $J \in C^{k+1, \alpha} (M, T^* M \otimes TM)$ such that $\theta_i \to \theta $ and $ J_i \to J$ in the sense of $C^{k+1, \alpha}$ convergence of components.
\end{proof}

Next we discuss the convergence of closed pseudo-Hermitian manifolds under some regularity condition of pseudo-Hermitian curvature and pseudo-Hermitian torsion. Let's slightly recall the notion of weights of covariant derivatives: for any tensor $\sigma$, we say that the covariant derivative $(\nabla^m \sigma) (X_1, X_2, \cdots, X_m)$ weights $k$ if there are $k_1$ horizontal vector fields and $k_2$ Reeb vector fields in $X_1, X_2, \cdots, X_m$ such that $ k_1 + 2 k_2 = k $.
The contraction of two tensors $\sigma_1$ and $\sigma_2$ is denoted by $\sigma_1 * \sigma_2$. 
The contraction of one tensor $\sigma$ is denoted by $\mathcal{L} (\sigma)$. For simplicity, we will always omit the tensors $J, \theta$, $g_\theta$ and their duals for contractions with other tensors in the absence of specific circumstances, because they are parallel associated with Tanaka-Webster connection $\nabla$.

Let's denote
\begin{align*}
	||\tilde{R}||_{C^k_\nabla} & = \sum_{m = 0}^k ||\nabla^m \tilde{R}||_{C^0} \\
	||A||_{C^k_\nabla} & = \sum_{m = 0}^k ||\nabla^m A||_{C^0} 
\end{align*}
where $\nabla$ is the Tanaka-Webster connection.

\begin{lemma} \label{d-lem-estimate}
	Suppose that $(M, HM, J, \theta)$ is a pseudo-Hermitian manifold. Then for any integer $k \geq 0$, we have 
	\begin{enumerate}[(1)]
		\item $D^k A$ is bounded by $||A||_{C^k_\nabla}$; \label{d-lem-conclusion-1}
		\item $D^k \tilde{R}$ is bounded by $||\tilde{R}||_{C^k_\nabla}$ and $||A||_{C^{k-1}_\nabla}$; \label{d-lem-conclusion-2}
		\item $D^k \hat{R}$ is bounded by $||\tilde{R}||_{C^k_\nabla}$ and $||A||_{C^{k+1}_\nabla}$; \label{d-lem-conclusion-3}
		\item $D^k A$ is bounded by $||\tilde{R}||_{\Gamma_{2k-2}}$ and $||A||_{\Gamma_{2k}}$; \label{d-lem-conclusion-4}
		\item $D^k \hat{R}$ is bounded by $||\tilde{R}||_{\Gamma_{2k}}$ and $||A||_{\Gamma_{2k+2}}$. \label{d-lem-conclusion-5}
	\end{enumerate}
\end{lemma}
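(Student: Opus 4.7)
The strategy rests on two structural identities from the earlier sections: equation \eqref{b-con}, which writes the Levi-Civita connection as $D = \nabla + T$ with $T = -(d\theta + A)\otimes \xi + \tau\otimes \theta + 2\theta \odot J$, and equation \eqref{b-pseudoriemcur}, which expresses $\hat{R}$ as $R$ plus an algebraic combination of $A$, $J$, $\theta$ and $\nabla A$ (the latter entering only through $S$ in \eqref{b-formula-s}). Since $J, \theta, \xi, g_\theta$ are parallel with respect to $\nabla$, the tensor $T$ satisfies $\nabla T = (\text{parallel tensors})*\nabla A$, so every further derivative of $T$ produces only derivatives of $A$.

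For \eqref{d-lem-conclusion-1}, I iterate $D = \nabla + T$ using the Leibniz rule. An induction shows
\[
D^k A = \sum P \,*\, \nabla^{m_0} A *\nabla^{m_1} A *\cdots *\nabla^{m_s} A,
\]
where each $P$ is an algebraic contraction of only the parallel tensors $J, \theta, g_\theta$ and $m_0 + m_1 + \cdots + m_s \leq k$. This gives the bound in terms of $\|A\|_{C^k_\nabla}$. The same Leibniz expansion applied to $D\tilde{R} = \nabla \tilde{R} + T*\tilde{R}$ yields \eqref{d-lem-conclusion-2}: because only one factor of $\tilde{R}$ appears, each level of iteration lets one $D$ act either on $\tilde{R}$ or on $T$, with the latter producing at most one extra $\nabla A$, so orders of $\nabla^j A$ appearing do not exceed $k-1$. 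For \eqref{d-lem-conclusion-3}, I first use the curvature components \eqref{b-tor6}-\eqref{b-tor7} together with the formulas for $R_{\bar\alpha\beta\bar\lambda\bar\mu}$, $R_{\bar\alpha\beta\lambda\mu}$ preceding them to write $R = \tilde{R} + (\text{polynomial in }A) + (\text{polynomial in }\nabla A)$. Substituting this and \eqref{b-formula-s}-\eqref{b-formula-o} into \eqref{b-pseudoriemcur} shows $\hat{R}$ is algebraic in $\tilde{R}$, $A$ and $\nabla A$; the Leibniz expansion then produces $D^k\hat{R}$ as a polynomial in $\nabla^j\tilde{R}$ ($j\leq k$) and $\nabla^i A$ ($i\leq k+1$).

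For \eqref{d-lem-conclusion-4} and \eqref{d-lem-conclusion-5}, the task is to convert general Tanaka-Webster derivatives into purely horizontal ones. Combining Lemma \ref{c-lem-ricidentity} with the pure torsion formula in Proposition \ref{b-tanakawebster}\eqref{b-twtorsion}, for horizontal $X,Y$ one has
\[
[\nabla_X,\nabla_Y]\sigma = R(X,Y)*\sigma - 2\,d\theta(X,Y)\,\nabla_\xi\sigma,
\]
so each Reeb-direction derivative $\nabla_0\sigma$ can be replaced by a pair of horizontal derivatives of $\sigma$ plus a term $R*\sigma$ involving the curvature of $\nabla$. Since every Reeb derivative carries weight $2$ while each horizontal derivative carries weight $1$, this substitution exactly respects the weight counting. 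Iterating the substitution in the expansions from \eqref{d-lem-conclusion-1} and \eqref{d-lem-conclusion-3}, and using once more that $R$ is built from $\tilde{R}$, $A$ and $\nabla A$, each $\nabla^m A$ of weight $\leq 2m$ is bounded by $\|A\|_{\Gamma_{2m}}$ plus lower-weight products of $\nabla^\ell \tilde{R}$ and $\nabla^\ell A$. An induction on $k$ then produces the bounds claimed in \eqref{d-lem-conclusion-4} and \eqref{d-lem-conclusion-5}.

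The main obstacle is the combinatorial bookkeeping in the last step: one must verify that commuting a Reeb derivative past horizontal derivatives in a term of weight $2k$ produces only curvature factors of weight at most $2k-2$ (for \eqref{d-lem-conclusion-4}) or $2k$ (for \eqref{d-lem-conclusion-5}), without inflating the horizontal order of either $\tilde{R}$ or $A$ beyond what is claimed. This amounts to an induction where the inductive hypothesis must be stated carefully in terms of both the order and the weight of the mixed derivatives, so that each application of the commutation identity strictly decreases the total Reeb-derivative count while preserving the claimed horizontal-order bound.
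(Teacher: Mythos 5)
Your proposal is correct and follows essentially the same route as the paper: the decomposition $D = \nabla + T$ from \eqref{b-con} with a Leibniz/induction argument for conclusions \eqref{d-lem-conclusion-1}--\eqref{d-lem-conclusion-3}, and the traced Ricci identity of Lemma \ref{c-lem-ricidentity} (giving $\nabla_\xi \sigma = \mathcal{L}(\nabla_b^2\sigma) + \sigma * \tilde{R}$) to trade each Reeb derivative for two horizontal derivatives in the weight count for \eqref{d-lem-conclusion-4}--\eqref{d-lem-conclusion-5}. The only organizational difference is that the paper runs the first induction through explicit formulas for $D^k\theta$ and $D^kJ$, which your version absorbs into the statement that $\nabla T$ involves only $\nabla A$ and parallel tensors.
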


\begin{proof}
	It is obvious for $k =0$. Now assume that $k \geq 1$.
	By \eqref{b-con}, for any $\sigma \in \Gamma (\otimes^p T^* M)$ and $X, X_1, \dots X_p \in \Gamma (TM) $, we have
	\begin{align*}
	D \sigma =& \nabla \sigma + \sigma * d \theta * \xi + \sigma * A * \xi + \sigma * \tau * \theta + \sigma * J * \theta \numberthis \label{c-derivativetensor} \\
	= & \nabla \sigma + \sigma * g * g^{-1} * J * \theta + \sigma * g^{-1} * A * \theta + \sigma * J * \theta 
	\end{align*}
	since $d \theta (\cdot, \cdot) = g (J \cdot, \cdot)$, $g(\xi , \cdot) = \theta (\cdot)$ and $A (\cdot, \cdot ) = g (\cdot, \tau ( \cdot ) )$ where $g= g_\theta$ and $g^{-1}$ is its inverse. 

	We claim that 
	\begin{align}
		D^k \theta & = P_{k-1} (\theta , J ; A) \label{c-devtheta} \\
		D^k J &  = Q_{k-1} (\theta, J ; A) \label{c-devj} \\
		D^k A & = \mathcal{L} (\nabla^k A) + S_{k-1} (\theta, J ; A) \label{c-devtorsion}
	\end{align}
	where $P_{k-1} (\theta, J; A) , Q_{k-1} (\theta, J; A) , S_{k-1} (\theta, J; A) $ represent the linear combination of $g, \theta, J$ and $\nabla^l A$ with $l \leq k-1$.
	We use induction to prove this claim. For the case $k=1$, the identities \eqref{c-levi3} and \eqref{c-levi4} lead \eqref{c-devtheta} and \eqref{c-devj}.
	The identity \eqref{c-devtorsion} follows from \eqref{c-derivativetensor} with $\sigma = A$. 
	Now assume that the claim is right for all cases $< k$ and consider the case $k$.
	Due to \eqref{c-levi3} and \eqref{c-levi4}, we have 
	\begin{align*}
		D^k \theta = & D^{k-1} (g * J + A) = \sum_{i \leq k-1} g * D^i J + D^{k-1} A, \\
		D^k J = & D^{k-1} (g * g^{-1} * \theta + g^{-1} * A * J * \theta + \theta * id ) \\
		= & \sum_{i \leq k-1} (g * g^{-1} * D^i \theta + D^i \theta * id) + \sum_{i+ j + l = k-1} g^{-1} * D^i A * D^j J * D^l \theta  ,
	\end{align*}
	which, combining with inductive assumption, yield \eqref{c-devtheta} and \eqref{c-devj}. One can similarly get \eqref{c-devtorsion} by \eqref{c-derivativetensor}.
	Hence $D^k \theta, D^k J$ and $D^k A$ are bounded by $||A||_{C^k_{\nabla}}$. Thus the conclusion \eqref{d-lem-conclusion-1} is obtained.

	Similarly, one can easily get 
	\begin{align}
		D^k \tilde{R} = \mathcal{L} (\nabla^k \tilde{R}) + T_{k-1} (\theta, J; A, \tilde{R})
	\end{align}
	where $T_{k-1} (\theta, J; A , \tilde{R})$ is the linear combination of $g, \theta, J$ and $\nabla^l A, \nabla^l \tilde{R}$ with $l \leq k-1$.
	Hence the conclusion \eqref{d-lem-conclusion-2} is finished.
	The identity \eqref{b-pseudoriemcur} shows that $\hat{R}$ involves $g, \theta, \xi, J, A, \nabla A $ and $ \tilde{R}$ which leads \eqref{d-lem-conclusion-3}.

	Due to the condition \eqref{b-twtorsion} in Proposition \ref{b-tanakawebster}, the  identity \eqref{c-ricidentity} shows that
	\begin{align*}
	& ( \nabla^2 \sigma )   ( X_1 , \cdots , X_p ; \eta_\alpha, \eta_{\bar{\alpha}} ) - ( \nabla^2 \sigma )   ( X_1 , \cdots , X_p ; \eta_{\bar{\alpha}}, \eta_\alpha )  \\
	&= \sigma  ( X_1 , \cdots , R(\eta_\alpha, \eta_{\bar{\alpha}}) X_i, \cdots,  X_p ) + 2 i \big( \nabla_{\xi}  \sigma \big) ( X_1 , \cdots , X_p ).
	\end{align*}
	which yields that 
	\begin{align*}
	\nabla_\xi \sigma = \mathcal{L} (\nabla_b^2 \sigma) + \sigma * \tilde{R}.
	\end{align*}
	Taking $\sigma = \nabla_b^j \tilde{R}$ and using \eqref{c-ricidentity} again, we get
	\begin{align}
	\nabla_b^i \nabla_\xi \nabla_b^j \tilde{R} =& \nabla_b^i ( \mathcal{L} (\nabla_b^{j+2} \tilde{R}) + \nabla_b^j \tilde{R} * \tilde{R}) \label{d-weight-cur} \\
	= & \mathcal{L} (\nabla_b^{i+j +2} \tilde{R}) + \sum_{k = 0}^i \nabla_b^{j+k} \tilde{R} * \nabla_b^{i - k} \tilde{R}, \nonumber
	\end{align}
	which yields that $||\tilde{R}||_{C^k_\nabla}$ can be estimate by $||\tilde{R}||_{\Gamma_{2k}}$.
	The similar argument of $\nabla_b^i \nabla_\xi \nabla_b^j A$ shows that $||A||_{C^k_\nabla}$ is bounded by $||\tilde{R}||_{\Gamma_{2k-2}}$ and $||A||_{\Gamma_{2k}}$. 
	Hence the conclusions \eqref{d-lem-conclusion-4} and \eqref{d-lem-conclusion-5} follow from the previous conclusions \eqref{d-lem-conclusion-1} and \eqref{d-lem-conclusion-3}.
\end{proof}

Using Lemma \ref{c-thm-riemcur} and Lemma \ref{d-lem-estimate}, we have the following convergence theorem for closed pseudo-Hermitian manifolds.

\begin{theorem} \label{c-smgeneralthm}
	Given constants $d, V_1$, $\lambda $ and $ \Lambda$ , any sequence of closed connected pseudo-Hermitian manifolds with same dimension and
	\begin{align*}
		||\tilde{R}||_{\Gamma_{2k}} \leq \Lambda, \quad ||A||_{\Gamma_{2k+2}} \leq \lambda, \quad \mbox{diam}_{cc} \leq d, \quad \mbox{Vol} \geq V_1
	\end{align*}
	% \begin{enumerate}[(1)]
	% 	\item $||\tilde{R}||_{\Gamma_{2k}} \leq \Lambda$, \label{c-condition1}
	% 	\item $||A||_{\Gamma_{2k+2}} \leq \lambda$, \label{c-condition2}
	% 	\item $\mbox{diam}_{cc} \leq d$,
	% 	\item $\mbox{Vol} \geq V_1$,
	% \end{enumerate}
	is $C^{k +1, \alpha}$ sub-convergent for any $\alpha \in (0,1)$.
\end{theorem}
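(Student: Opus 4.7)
The plan is to reduce the statement to Lemma~\ref{c-thm-riemcur} by converting the intrinsic horizontal bounds on $\tilde R$ and $A$ into Levi--Civita bounds on $\hat R$ and $A$, which is exactly the content of Lemma~\ref{d-lem-estimate}. Observe first that $\Gamma$-norms are monotone in their order, so $\|\tilde R\|_{\Gamma_{2m}}\le\Lambda$ for every $m\le k$ and $\|A\|_{\Gamma_{2m+2}}\le\lambda$ for every $m\le k$; the only genuinely new information in the hypothesis is the bound at the top order.

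First I would apply conclusion~\eqref{d-lem-conclusion-5} of Lemma~\ref{d-lem-estimate}: for each $0\le m\le k$, $|D^m\hat R|$ is controlled by $\|\tilde R\|_{\Gamma_{2m}}$ and $\|A\|_{\Gamma_{2m+2}}$, hence by a constant $\Lambda_m=\Lambda_m(n,\Lambda,\lambda)$ coming from the bounds in the hypothesis. Next I would apply conclusion~\eqref{d-lem-conclusion-4}: for each $0\le m\le k+1$, $|D^m A|$ is controlled by $\|\tilde R\|_{\Gamma_{2m-2}}$ and $\|A\|_{\Gamma_{2m}}$ (the case $m=0$ being trivial and requiring no curvature input). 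The critical top case $m=k+1$ uses precisely the two assumed top-order bounds $\|\tilde R\|_{\Gamma_{2k}}\le\Lambda$ and $\|A\|_{\Gamma_{2k+2}}\le\lambda$. We obtain constants $\lambda_m=\lambda_m(n,\Lambda,\lambda)$ with $|D^mA|\le\lambda_m$ for $m=0,\dots,k+1$.

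Finally, I would feed these pointwise bounds, together with $\mathrm{diam}_{cc}\le d$ (which implies $\mathrm{diam}_{Riem}\le d$ since $d_{Riem}\le d_{cc}$) and $\mathrm{Vol}\ge V_1$, into Lemma~\ref{c-thm-riemcur}. All four hypotheses of that lemma are then in force with uniform constants depending only on $n$, $\Lambda$, $\lambda$, $d$ and $V_1$, so there is a subsequence that converges in $C^{k+1,\alpha}$ as pseudo-Hermitian manifolds in the sense of the definition preceding Lemma~\ref{c-thm-riemcur}.

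I do not expect any substantive obstacle here: the two preceding lemmas do all the real work, and the theorem is a clean synthesis of them. The only point worth double-checking is that the constants produced by Lemma~\ref{d-lem-estimate} really depend only on $n$ and on the stated $\Gamma$-norms of $\tilde R$ and $A$, and not on any geometric quantity of the individual manifold; this is immediate from the purely tensorial/algebraic nature of the proof of that lemma, which rewrites $D$-derivatives as universal polynomial combinations of $g_\theta$, $J$, $\theta$, $\nabla^l A$ and $\nabla^l\tilde R$.
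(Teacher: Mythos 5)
Your proposal is correct and is exactly the paper's intended argument: the paper states this theorem with no written proof beyond the remark that it follows from Lemma \ref{c-thm-riemcur} and Lemma \ref{d-lem-estimate}, and your synthesis (conclusion \eqref{d-lem-conclusion-5} for $D^m\hat R$, $m\le k$, conclusion \eqref{d-lem-conclusion-4} for $D^mA$, $m\le k+1$, then $d_{Riem}\le d_{cc}$ and the volume bound to invoke Lemma \ref{c-thm-riemcur}) is precisely that deduction with the index bookkeeping made explicit.
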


\begin{corollary}
	The class of closed connected pseudo-Hermitian manifolds with same dimension and 
	\begin{align*}
		||\tilde{R}||_{\Gamma_k} \leq \Lambda_k, \quad ||A||_{\Gamma_k} \leq \lambda_k, \quad \mbox{diam}_{cc} \leq d, \quad \mbox{Vol} \geq V_1
	\end{align*}
	for all integer $k \geq 0$ is $C^\infty$ compact.
\end{corollary}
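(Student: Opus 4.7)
The plan is to deduce the $C^\infty$ compactness from the $C^{k+1,\alpha}$ sub-convergence supplied by Theorem \ref{c-smgeneralthm} through a standard Cantor diagonal extraction. Given any sequence of closed connected pseudo-Hermitian manifolds $(M_i, HM_i, J_i, \theta_i)$ of fixed dimension satisfying the hypotheses, for each fixed $k \geq 0$ the bounds $||\tilde{R}_i||_{\Gamma_{2k}} \leq \Lambda_{2k}$ and $||A_i||_{\Gamma_{2k+2}} \leq \lambda_{2k+2}$, combined with the uniform diameter and volume bounds, place the entire sequence within the hypotheses of Theorem \ref{c-smgeneralthm} at level $k$. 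So one can extract a subsequence that $C^{k+1,\alpha}$ sub-converges for every $\alpha \in (0,1)$.

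Concretely, I would apply Theorem \ref{c-smgeneralthm} first at $k = 0$ to extract a subsequence $\{i_j^{(0)}\}$ and produce a limiting pseudo-Hermitian manifold $(M, HM, J, \theta)$ together with diffeomorphisms $\phi^{(0)}_j : M \to M_{i_j^{(0)}}$ such that $(\phi^{(0)}_j)^* \theta_{i_j^{(0)}} \to \theta$ and $(\phi^{(0)}_j)^* J_{i_j^{(0)}} \to J$ in $C^{1,\alpha}$. Then I would apply the theorem at $k = 1$ to the already-extracted subsequence to refine it to $\{i_j^{(1)}\} \subset \{i_j^{(0)}\}$ converging in $C^{2,\alpha}$, iterate this at each level $k$, and finally take the diagonal subsequence $\{i_j^{(j)}\}$, which then converges in $C^{k+1,\alpha}$ for every $k$, that is in $C^{\infty}$.

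The main point to watch is that the limit objects and the diffeomorphisms produced at different regularity levels can be identified with those at level $0$. Here I would use the fact embedded in the proof of Lemma \ref{c-thm-riemcur}: Cheeger's finiteness reduces each level-$k$ extraction to a fixed underlying differentiable manifold $M$, and then $C^{k+1,\alpha}$ convergence implies $C^{1,\alpha}$ convergence, so by uniqueness of $C^{1,\alpha}$ limits the higher-regularity limits must coincide with $(M, HM, J, \theta)$ (after possibly composing diffeomorphisms with the previously chosen ones; since each refinement is a subsequence of the previous, no genuine conflict arises). This step is the only real subtlety; everything else is bookkeeping.

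Finally, the bounds $||\tilde{R}||_{\Gamma_k} \leq \Lambda_k$ and $||A||_{\Gamma_k} \leq \lambda_k$ pass to the limit because $\tilde{R}$ and $A$ are continuous functions of $\theta$, $J$ and their horizontal derivatives up to order $k$, and $C^\infty$ convergence of $\theta_i, J_i$ implies uniform convergence of these tensors together with all of their horizontal covariant derivatives. Hence the limit lies in the same class, establishing $C^\infty$ compactness.
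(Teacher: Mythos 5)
Your proposal is correct and follows the route the paper intends: the corollary is stated as an immediate consequence of Theorem \ref{c-smgeneralthm}, obtained by applying it at every level $k$ (the hypotheses supply $\|\tilde{R}\|_{\Gamma_{2k}}$ and $\|A\|_{\Gamma_{2k+2}}$ bounds for all $k$) and extracting a diagonal subsequence. Your additional care in identifying the limits across regularity levels and verifying that the limit remains in the class is sound and consistent with the paper's framework.
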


From an analytical viewpoint in Riemannian geometry, Bochner formulae of geometric tensor always provide their high-order derivative estimates, such as harmonic maps and Einstein metrics.
This idea is also effective for pseudo-Einstein structure which requires less derivatives for pseudo-Hermitian curvature in Theorem \ref{c-smgeneralthm} as follows:

\begin{theorem} \label{c-smcptpseudothm}
	Given constants $\kappa_1, \kappa_2, d, V_1, \lambda $ and $ \Lambda$, any sequence of closed connected pseudo-Einstein manifolds with dimension $2n+1 \geq 5$ and
	\begin{align}
		|A| \leq \kappa_1, |div A| \leq \kappa_2, || A||_{S^{\frac{q}{2}}_{2k+4}} \leq \lambda, ||\tilde{R}||_{\frac{q}{2}} \leq \Lambda, \mbox{diam}_{cc} \leq d, \mbox{Vol} \geq V_1 \label{c-condthm-1}
	\end{align}
	for some $q > Q$ where $Q$ is given in \eqref{b-crsobolev},
	% \begin{enumerate}[(1)]
	% 	\item $|A| \leq \kappa_1, |div A| \leq \kappa_2$,
	% 	\item $||\tilde{R}||_{\frac{q}{2}} \leq \kappa_3$,
	% 	\item $|| A||_{S^{\frac{q}{2}}_{2k+4}} \leq \lambda$, \label{c-hgtorsion}
	% 	\item $\mbox{diam}_{cc} \leq d$,
	% 	\item $\mbox{Vol} \geq V_1$
	% \end{enumerate}
	is $C^{k + 1, \alpha}$ sub-convergent for any $\alpha \in (0,1)$.
\end{theorem}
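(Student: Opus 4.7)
The plan is to reduce Theorem \ref{c-smcptpseudothm} to Theorem \ref{c-smgeneralthm} by upgrading the hypotheses in \eqref{c-condthm-1} to pointwise bounds $\|\tilde{R}\|_{\Gamma_{2k}} \leq \Lambda'$ and $\|A\|_{\Gamma_{2k+2}} \leq \lambda'$. First I would verify that the CR Sobolev inequality (Lemma \ref{b-crsobolevlem}) is available: the pseudo-Einstein condition gives $R_* = (\rho/n) G_\theta$, while \eqref{b-tordiv} together with the bounds on $|A|$ and $|\mathrm{div}\, A|$ controls $\rho$ sufficiently to fall within the framework of \cite{chang2017pseudo}. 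Consequently Moser iteration (Lemma \ref{c-moser}) and the Folland--Stein embedding (Lemma \ref{c-lem-sobolev-2}) are at our disposal throughout the argument.

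Next, I would apply Moser iteration to the subelliptic inequality \eqref{c-subcur2},
\[
\Delta_b |\tilde{R}| + C_{\ref*{cst-1}} |\tilde{R}| \cdot |\tilde{R}| + C_{\ref*{cst-1}} \bigl(|\nabla_b^2 A| + |A^2|\bigr) \geq 0,
\]
interpreted as $\Delta_b f + \phi f + \psi \geq 0$ with $f = |\tilde{R}|$, $\phi = C_{\ref*{cst-1}}|\tilde{R}|$ and $\psi = C_{\ref*{cst-1}}(|\nabla_b^2 A| + |A^2|)$. The $L^{q/2}$ norm of $\phi$ is controlled by $\Lambda$, and that of $\psi$ by $\lambda$ and $\kappa_1$, so Lemma \ref{c-moser} yields $\|\tilde{R}\|_\infty \leq C$. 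To bootstrap to higher horizontal derivatives I would differentiate the CR Bochner identity \eqref{c-subcur1} $m$ times, using the Ricci identity \eqref{c-ricidentity} to commute $\nabla_b^m$ past $\Delta_b$. This produces a subelliptic inequality $\Delta_b |\nabla_b^m \tilde{R}| + \tilde{\phi}_m |\nabla_b^m \tilde{R}| + \tilde{\psi}_m \geq 0$ in which $\tilde{\phi}_m$ is controlled by already bounded quantities and $\tilde{\psi}_m$ involves $\nabla_b^{m+2}A$ plus lower-order expressions in $\tilde{R}$, $A$ and their previously controlled derivatives. An induction on $m$, each step invoking Moser iteration, produces the uniform bounds $\|\nabla_b^m \tilde{R}\|_\infty < \infty$ for all $m \leq 2k$.

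For $A$ itself, I would use Sobolev embedding: since $\|A\|_{S^{q/2}_{2k+4}} \leq \lambda$, each $\nabla_b^j A$ with $j \leq 2k+2$ lies in $S^{q/2}_2$, and by iterated $L^p$--Sobolev embedding in Folland--Stein spaces (improving the integrability exponent at the cost of one derivative each time) one reaches a space $S^{p}_1$ with $p > Q$; Lemma \ref{c-lem-sobolev-2} then gives $\nabla_b^j A \in C^0$ with uniform bound. Finally, horizontal covariant derivatives are upgraded to full covariant derivatives via \eqref{d-weight-cur} and Lemma \ref{d-lem-estimate}, yielding uniform bounds on $\|\tilde{R}\|_{\Gamma_{2k}}$ and $\|A\|_{\Gamma_{2k+2}}$; Theorem \ref{c-smgeneralthm} then delivers the desired $C^{k+1,\alpha}$ sub-convergence.

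The hardest step, I expect, is the inductive bound on $\nabla_b^m \tilde{R}$. The commutator $[\nabla_b^m, \Delta_b]$ generates many curvature--torsion contractions, and one must check that the pseudo-Einstein condition (which ties $R_*$ entirely to the scalar $\rho$ and hence to $\mathrm{div}\,A$ via \eqref{b-tordiv}) together with the quasilinear source structure of \eqref{c-subcur1} keeps $\tilde{\phi}_m \in L^{q/2}$ and $\tilde{\psi}_m \in L^{q/2}$ at every stage of the induction, using only the previously obtained bounds and the input $\|A\|_{S^{q/2}_{2k+4}} \leq \lambda$. A careful accounting of how many derivatives of $A$ accumulate in $\tilde{\psi}_m$ is precisely what forces the hypothesis on $A$ to be of order $2k+4$ rather than $2k+2$.
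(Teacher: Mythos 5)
Your overall strategy coincides with the paper's: reduce to Theorem \ref{c-smgeneralthm}, check that the CR Sobolev inequality holds uniformly, run Moser iteration on \eqref{c-subcur2} for the $C^0$ bound of $\tilde{R}$, and then induct on the order of horizontal derivatives using differentiated Bochner identities. However, there is a genuine gap in your inductive step. Lemma \ref{c-moser} does not produce an absolute sup bound: its conclusion is $\sup_M|f| \leq C_{\ref*{cst-3}}\bigl(\|f\|_{\frac{Q}{2},M} + \|\psi\|_{\frac{q}{2},M}\bigr)$, so every application requires an a priori integral bound on the quantity being estimated. For $m=0$ this is supplied by the hypothesis $\|\tilde{R}\|_{\frac{q}{2}} \leq \Lambda$, but for $m\geq 1$ nothing in your induction hypothesis (which only gives $L^\infty$ control of $\nabla_b^j\tilde{R}$ for $j<m$) or in \eqref{c-condthm-1} bounds $\|\nabla_b^m\tilde{R}\|_{\frac{Q}{2}}$, so the iteration has nothing to start from. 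The paper closes this by an energy estimate: multiplying the identity $\Delta_b\nabla_b^m\tilde{R} = \nabla_b^m\tilde{R}*\tilde{R} + \nabla_b^m\tilde{R}*A + B_m$ by $\nabla_b^m\tilde{R}$ and integrating by parts on a CC ball yields $\int_{B_R}|\nabla_b^{m+1}\tilde{R}|^2 \leq \Lambda''$ from the already-established lower-order bounds, and the Moser scheme is then run with $p_0 = 2$ rather than $p_0 = Q/2$. Without this intermediate $L^2$ step your induction does not close.

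A secondary deviation: for the torsion you replace the paper's argument (Moser iteration applied to $\Delta_b\nabla_b^m A = \mbox{trace}_{G_\theta}\nabla_b^2\nabla_b^m A$, whose right side lies in $L^{\frac{q}{2}}$ for $m\leq 2k+2$ by the $S^{\frac{q}{2}}_{2k+4}$ hypothesis) with an iterated Folland--Stein embedding to climb from exponent $\frac{q}{2}$ to an exponent above $Q$. That route is plausible, but it requires an $L^p$ version of the CR Sobolev inequality with constants depending only on $(n,\kappa_1,\kappa_2,d,V_1)$; the paper only establishes the $L^2$ inequality \eqref{b-crsobolev} and the borderline embedding of Lemma \ref{c-lem-sobolev-2} with controlled constants, and explicitly notes that the constant in the general Folland--Stein embedding has unclear dependence. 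You would need to derive that $L^p$ inequality from \eqref{b-crsobolev} (e.g.\ by applying it to powers of $u$) before this step is legitimate; the paper's Moser-iteration route avoids the issue entirely.
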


\begin{proof}
	By Theorem \ref{c-smgeneralthm}, it suffices to estimate $||\tilde{R}||_{\Gamma_{2k} }$ and $||A||_{\Gamma_{2k+2} }$ of a closed connected pseudo-Hermitian manifold $(M, HM, J, \theta)$ with \eqref{c-condthm-1}. 
	According to Lemma \ref{b-crsobolevlem}, the assumptions assert that CR Sobolev inequality uniformly holds. Moreover, for $m \leq 2k+2$, the right side of the following equation
	\begin{align*}
	\Delta_b \nabla_b^m A = \mbox{trace}_{G_\theta} \nabla_b^2 \nabla_b^m A
	\end{align*}
	is uniformly bounded and thus Lemma \ref{c-moser} guarantees that 
	\begin{align*}
	||A||_{\Gamma_{2k+2}} \leq \lambda' = \lambda' (\kappa_1, \kappa_2, q, d, V_1, \lambda).
	\end{align*}
	
	Next, we use induction to prove the uniformly bound of $||\tilde{R}||_{\Gamma_{2k} (M)}$. 
	The case $||\tilde{R}||_{\Gamma_0}$ is easily obtained by subelliptic inequality \eqref{c-subcur2} and Lemma \eqref{c-moser}. Assume that $||\tilde{R}||_{\Gamma_m}$ is uniformly bounded for $m \leq 2k -1 $. For the case $m+1$, we first observe that 
	\begin{align}
	\Delta_b \nabla_b^m \tilde{R} =& \mathcal{L} (\nabla_b^{m+2} A)  + \sum_{i + j = m} \left( \nabla_b^i A * \nabla_b^j A + \nabla_b^i \tilde{R} * \nabla_b^j A + \nabla_b^i \tilde{R} * \nabla_b^j \tilde{R} \right) \label{c-sublphgcur} \\
	= & \nabla_b^m \tilde{R} * \tilde{R} + \nabla_b^m \tilde{R} * A + B_m \nonumber
	\end{align}
	due to the identity \eqref{c-ricidentity} and pseudo-Einstein condition where $B_m$ contains horizontal derivatives of $A$ with order $\leq m+2$, horizontal derivatives of $\tilde{R}$ with order $\leq m-1$ and thus it is uniformly bounded. 
	For any $x \in M$, choose a CC ball $B_R (x)$ with volume $\frac{V_1}{2}$.
	On one hand, Multiplying \eqref{c-sublphgcur} with $\nabla_b^m \tilde{R}$ and taking integral on $B_R (x)$, the result is 
	\begin{align*}
	\int_{B_R (x)} |\nabla_b^{m+1} \tilde{R}|^2 \leq \Lambda''
	\end{align*}
	due to the induction assumption where $\Lambda''$ is a constant. 
	On the other hand, the $m+1$ version of \eqref{c-sublphgcur} will leads a subelliptic inequality of $\nabla_b^{m+1} \tilde{R}$. Hence a similar argument of Lemma \ref{c-moser} estimates the $L^\infty$ norm of $\nabla_b^{m+1} \tilde{R}$.
	Then the proof is finished by Theorem \ref{c-smgeneralthm}.
\end{proof}

\begin{corollary} \label{d-cor-einstein}
	Given constants $d, V_1$ and $\Lambda$, the class of closed connected Sasakian pseudo-Einstein manifolds with dimension $2n+1 \geq 5$ and 
	\begin{align}
		||\tilde{R}||_{\frac{q}{2}} \leq \Lambda, \quad \mbox{diam}_{cc} \leq d, \quad \mbox{Vol} \geq V_1
	\end{align}
	for some $q > Q$ where $Q$ is given in \eqref{b-crsobolev}, is $C^\infty$ compact.
\end{corollary}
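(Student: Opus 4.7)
The plan is to reduce the corollary directly to Theorem \ref{c-smcptpseudothm} and then pass from the $C^{k+1,\alpha}$ conclusions to a $C^\infty$ one by diagonal extraction. The key observation is that a Sasakian structure is exactly one with $\tau \equiv 0$, so the pseudo-Hermitian torsion $A$ vanishes identically. Consequently the three torsion hypotheses appearing in \eqref{c-condthm-1}, namely $|A| \leq \kappa_1$, $|\mbox{div}\,A| \leq \kappa_2$, and $||A||_{S^{q/2}_{2k+4}} \leq \lambda$, hold trivially for every $k$ with $\kappa_1 = \kappa_2 = \lambda = 0$, while the remaining hypotheses $||\tilde{R}||_{q/2} \leq \Lambda$, $\mbox{diam}_{cc} \leq d$ and $\mbox{Vol} \geq V_1$ are precisely those in the corollary.

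Given any sequence $(M_i, H M_i, J_i, \theta_i)$ in the stated class, Theorem \ref{c-smcptpseudothm} applied with $k = 0$ yields a subsequence converging in $C^{1,\alpha}$. Applying the theorem again with $k = 1$ to this subsequence extracts a further subsequence converging in $C^{2,\alpha}$, and so on. A standard Cantor diagonal argument then selects a single subsequence that converges in $C^{k,\alpha}$ for every $k$, which is $C^\infty$ convergence in the sense of the definition preceding Lemma \ref{c-thm-riemcur}: there exist diffeomorphisms $\phi_i : M \to M_i$ with $\phi_i^* \theta_i \to \theta$ and $\phi_i^* J_i \to J$ in $C^\infty$.

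Finally I would check that the limit $(M, HM, J, \theta)$ lies in the same class. Once $\phi_i^*\theta_i \to \theta$ and $\phi_i^* J_i \to J$ in $C^\infty$, all derived tensors converge in $C^\infty$ through their local coordinate expressions in terms of $\theta$, $J$ and their derivatives (the Reeb field from $\xi \lrcorner \theta = 1$, $\xi \lrcorner d\theta = 0$; the Webster metric from $g_\theta = d\theta(\cdot, J\pi_H \cdot) + \theta \otimes \theta$; the Tanaka-Webster connection from Proposition \ref{b-tanakawebster}; and then $A$, $\tilde{R}$ and $R_*$ accordingly). Thus the Sasakian condition $A_i \equiv 0$ and the pseudo-Einstein condition $R_{*,i} = \frac{\rho_i}{n} G_{\theta_i}$ both pass to the limit, so $(M, HM, J, \theta)$ is Sasakian pseudo-Einstein. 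The $L^{q/2}$ bound on $\tilde{R}$, the diameter bound, and the volume bound are likewise preserved by $C^\infty$ convergence.

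There is essentially no analytic obstacle, since the hard work — the Bochner-type subelliptic estimates and Moser iteration that produce the $\Gamma_{2k}$ bounds on $\tilde{R}$ from the $L^{q/2}$ bound — has already been carried out inside the proof of Theorem \ref{c-smcptpseudothm}; the only bookkeeping is the diagonal extraction and the verification that the defining identities for being Sasakian and pseudo-Einstein are closed conditions under $C^\infty$ convergence of $(\theta, J)$.
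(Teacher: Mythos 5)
Your proposal is correct and is essentially the argument the paper intends: the corollary is stated as an immediate consequence of Theorem \ref{c-smcptpseudothm}, since in the Sasakian case $A\equiv 0$ makes every torsion hypothesis in \eqref{c-condthm-1} hold trivially for all $k$, and the $C^\infty$ statement follows by running the theorem for each $k$ and diagonalizing. Your extra check that the Sasakian and pseudo-Einstein conditions are closed under $C^\infty$ convergence of $(\theta,J)$ is consistent with what the paper later does explicitly in Lemma \ref{d-sasastrcpt}.
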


One can also replace the $S^{\frac{q}{2}}_{2k+4}$ norm condition of pseudo-Hermitian torsion $A$ in Theorem \ref{c-smcptpseudothm} by the $S^q_{2k+3}$ norm due to Lemma \ref{c-lem-sobolev-2}. This can be weakened if the dimension is $5$ and the pseudo-Hermitian scalar curvature is constant.

\begin{theorem} \label{c-pseudocptness}
	Given constants $\kappa_1, d, V_1, \lambda $ and $ \Lambda$, any sequence of closed connected pseudo-Einstein manifolds with dimension 5, constant pseudo-Hermitian scalar curvature $\rho$ and
	\begin{align}
		|A| \leq \kappa_1, ||A||_{S^q_{2k+2}} \leq \lambda, ||\tilde{R}||_{\frac{q}{2}} \leq \Lambda, \mbox{diam}_{cc} \leq d, \mbox{Vol} \geq V_1, \label{c-condthm-2}
	\end{align}
	for some $q > Q$ where $Q$ is given in \eqref{b-crsobolev},
	% \begin{enumerate}[(1)]
	% 	\item constant pseudo-Hermitian scalar curvature $\rho$,
	% 	\item $|A| \leq \kappa_1$,
	% 	\item $||\tilde{R}||_{\frac{q}{2}, M} \leq \kappa_3$,
	% 	\item $||A||_{S^q_{2k+2}} \leq \lambda$,
	% 	\item $\mbox{diam}_{cc} \leq d$,
	% 	\item $\mbox{Vol} \geq V_1$
	% \end{enumerate}
	is $C^{ k+1, \alpha}$ sub-convergent for any $\alpha \in (0,1)$.
\end{theorem}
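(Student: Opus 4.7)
The plan is to parallel the proof of Theorem~\ref{c-smcptpseudothm}, exploiting the dimensional collapse at $n=2$ of the Bochner formula in Lemma~\ref{b-lem-bochner-tor} together with the constant scalar curvature hypothesis to save two horizontal derivatives on $A$. By Theorem~\ref{c-smgeneralthm} it suffices to produce uniform bounds on $\|\tilde{R}\|_{\Gamma_{2k}}$ and $\|A\|_{\Gamma_{2k+2}}$.

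First, pseudo-Einstein together with $\rho$ constant forces, through the contracted Bianchi identity \eqref{b-einscalar}, the $(0,1)$-divergence tensor $A_{\lambda\mu,\bar\mu}$ to vanish identically, so $|\mathrm{div}\,A|\equiv 0$ is a uniform bound and Lemma~\ref{b-crsobolevlem} yields a uniform CR Sobolev inequality across the sequence. In Lemma~\ref{b-lem-bochner-tor} with $n=2$, the Reeb-direction coefficient $-2(n-2)i$ of $A_{\alpha\beta,0}$ vanishes, the pair $R_{\beta\bar\rho}A_{\rho\alpha} - R_{\alpha\bar\rho}A_{\rho\beta} = \tfrac{\rho}{n}(A_{\beta\alpha}-A_{\alpha\beta})$ vanishes by the symmetry of $A$, and the contracted second derivative $A_{\alpha\gamma,\bar\gamma\beta}$ is the covariant derivative of the identically vanishing tensor $A_{\alpha\gamma,\bar\gamma}$, hence is also zero. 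The Bochner formula therefore collapses to the clean identity
$$\Delta_b A_{\alpha\beta} = 2 R_{\bar\rho\alpha\beta\bar\gamma} A_{\rho\gamma},$$
i.e.\ $\Delta_b A = 2\tilde{R}\ast A$, an elliptic-type equation whose right-hand side contains neither $\nabla_\xi A$ nor any second horizontal derivative of $A$.

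The induction for $\tilde{R}$ now runs exactly as in Theorem~\ref{c-smcptpseudothm}. Assume $\|\nabla_b^j\tilde{R}\|_\infty$ is bounded for $j\leq m-1$; from Lemma~\ref{c-lem-sobolev-2} applied to $\|A\|_{S^q_{2k+2}}\leq\lambda$ we already have $\|\nabla_b^j A\|_\infty$ for $j\leq 2k+1$, while $\nabla_b^{m+2}A$ lies in $L^q\subset L^{q/2}$ as long as $m\leq 2k$. Differentiating \eqref{c-subcur1} $m$ times and applying Kato's inequality produces a subelliptic inequality for $|\nabla_b^m\tilde{R}|$ with coefficient $\phi = C(|\tilde{R}|+|A|)\in L^{q/2}$ and source $\psi\in L^{q/2}$, so Lemma~\ref{c-moser} bounds $\|\nabla_b^m\tilde{R}\|_\infty$ at each step and gives $\|\tilde{R}\|_{\Gamma_{2k}}\leq\Lambda'$. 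For $A$, Sobolev embedding already supplies $\|\nabla_b^m A\|_\infty$ for $m\leq 2k+1$; only the top order $m=2k+2$ remains. Applying $\nabla_b^{2k}$ to the collapsed Bochner $\Delta_b A = 2\tilde{R}\ast A$ and using Lemma~\ref{c-lem-ricidentity} together with $\nabla_\xi\sigma = \mathcal{L}(\nabla_b^2\sigma)+\sigma\ast\tilde{R}$ to re-express Reeb commutator terms horizontally gives
$$\Delta_b\nabla_b^{2k}A \;=\; \sum_{i+j=2k} C_{ij}\,\nabla_b^i\tilde{R}\ast\nabla_b^j A \;+\; (\text{purely horizontal terms of order }\leq 2k+1\text{ in }A),$$
whose right-hand side is uniformly bounded in $L^\infty$ by the established bounds on $\|\tilde{R}\|_{\Gamma_{2k}}$ and $\|\nabla_b^{\leq 2k+1}A\|_\infty$. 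A subelliptic Schauder-type regularity applied to this identity (or equivalently a bootstrap Moser step on $u=|\nabla_b^{2k+2}A|$ with $\phi=C(|\tilde{R}|+|A|)$ absorbed as coefficient and the remaining source built from the already controlled lower-order horizontal derivatives) upgrades the hypothesis $\nabla_b^{2k+2}A\in L^q$ to $L^\infty$, yielding $\|A\|_{\Gamma_{2k+2}}\leq\lambda'$. Theorem~\ref{c-smgeneralthm} then delivers the $C^{k+1,\alpha}$ sub-convergence.

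The main obstacle is this last bootstrap: since $\tilde{R}$ is controlled in $L^\infty$ only up to order $2k$, differentiating the collapsed Bochner too many times would insert a $\nabla_b^{\geq 2k+1}\tilde{R}$ into the right-hand side, and one must carefully arrange the commutation of $\Delta_b$ past $\nabla_b^{2k}$ so that every residual term stays within the orders supplied by the inductive hypothesis and the Sobolev embedding. The dimensional collapse $\Delta_b A = 2\tilde{R}\ast A$ is exactly what makes this bookkeeping possible: the general Bochner formula would contribute $\nabla_b(\mathrm{div}\,A)$ on the right, and $2k$ differentiations of that term would force $\nabla_b^{2k+2}A$ onto the right of the very equation we use to control $\nabla_b^{2k+2}A$, producing a circular dependency that Moser iteration cannot break.
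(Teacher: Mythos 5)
Your reductions are right and match the paper's: constant $\rho$ plus pseudo-Einstein forces $\mathrm{div}\,A=0$ via \eqref{b-tordiv}, which restores the uniform CR Sobolev inequality, and in dimension $5$ the Bochner formula \eqref{c-reeb} collapses (the $-2(n-2)iA_{\alpha\beta,0}$ term dies at $n=2$, $A_{\alpha\gamma,\bar\gamma\beta}$ dies with the divergence, and the Ricci terms cancel by pseudo-Einstein and the symmetry of $A$). The induction on $\|\nabla_b^m\tilde{R}\|_\infty$ also runs as in Theorem \ref{c-smcptpseudothm}. The gap is in your final step. First, your displayed identity for $\Delta_b\nabla_b^{2k}A$ is not correct as stated: commuting $\Delta_b$ past $\nabla_b^{2k}$ produces, via the pure torsion $T_\nabla(X,Y)=2d\theta(X,Y)\xi$ in Lemma \ref{c-lem-ricidentity}, terms of the form $\nabla_b^{a}\nabla_\xi\nabla_b^{b}A$ with $a+b=2k$, which by \eqref{d-commutate-tor} are horizontal of order $2k+2$ — the same order you are trying to bound — not ``order $\leq 2k+1$.'' Second, and more seriously, even granting a clean subelliptic equation for $\nabla_b^{2k}A$ with bounded right-hand side, no tool in the paper upgrades that to a sup bound on $\nabla_b^{2k+2}A$: Moser iteration (Lemma \ref{c-moser-1}/\ref{c-moser}) bounds the solution of a subelliptic inequality, never its second horizontal derivatives, and an $L^\infty$ right-hand side does not give $L^\infty$ second derivatives even for the ordinary Laplacian. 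The ``subelliptic Schauder-type regularity'' you invoke is neither established with uniform constants in this setting nor usable here, since your source term itself contains $\nabla_b^{2k+2}A$.

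The fix — and what the paper actually does — is to notice that the full bound $\|A\|_{\Gamma_{2k+2}}\leq\lambda'$ is not needed. One routes through Lemma \ref{c-thm-riemcur} and Lemma \ref{d-lem-estimate} rather than Theorem \ref{c-smgeneralthm}, so it suffices to control $\|A\|_{C^{k+1}_\nabla}$, i.e.\ Tanaka--Webster derivatives of \emph{order} $\leq k+1$. Among these, everything of weight $\leq 2k+1$ is already controlled by $\|A\|_{\Gamma_{2k+1}}$ (supplied by Lemma \ref{c-lem-sobolev-2} from $\|A\|_{S^q_{2k+2}}\leq\lambda$) together with \eqref{d-commutate-tor}; the only missing quantity is the single pure Reeb derivative $\nabla_\xi^{k+1}A$. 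Applying $\nabla_\xi^{k+1}$ to the collapsed Bochner identity yields a subelliptic inequality for $\nabla_\xi^{k+1}A$ \emph{itself} whose right-hand side involves $\nabla_b^{2k+2}A$ only through terms like $\tilde{R}*\nabla_b^{2k+2}A$, which are in $L^q$ by hypothesis — exactly the form to which Lemma \ref{c-moser} applies, giving $\sup|\nabla_\xi^{k+1}A|$ directly. This is why the $S^q_{2k+2}$ assumption is the natural one: it is consumed as an $L^{q/2}$ source term, not bootstrapped to $L^\infty$.
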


\begin{proof}
	By Lemma \ref{c-thm-riemcur} and Lemma \ref{d-lem-estimate}, it suffices to estimate $||\tilde{R}||_{C^k_\nabla}$ and $||A||_{C^{k+1}_\nabla}$. Suppose that $(M^5, HM, J, \theta)$ is a closed Hermitian manifold satisfying the conditions \eqref{c-condthm-2} in this theorem.
	By the constancy of pseudo-Hermitian scalar curvature and pseudo-Einstein condition, we know that $div A = 0$ due to \eqref{b-tordiv}. Hence Lemma \ref{c-lem-sobolev-2} holds which makes $||A||_{\Gamma_{2k+1}} \leq \lambda'$ where $\lambda' =  \lambda' (\kappa_1, d, V_1, q, \lambda)$. 

	We can use induction to prove $||\tilde{R}||_{\Gamma_{2k}} \leq \Lambda'$ by a similar argument in Theorem \ref{c-smcptpseudothm} and thus obtain the uniformly bound of all derivatives of $\tilde{R}$ with weight $\leq 2k$ by \eqref{d-weight-cur} which gives the estimate of $||\tilde{R}||_{C^k_\nabla}$.

	It is notable that
	\begin{align}
		\nabla_b^i \nabla_\xi \nabla_b^j A  = \nabla_b^i \left( \nabla_b^{j+2} A + \nabla_b^j A * \tilde{R} \right) = \nabla_b^{i+j+ 2} A + \sum_{l=0}^i \nabla_b^{j+l} A * \nabla_b^{i-l} \tilde{R}. \label{d-commutate-tor}
	\end{align}
	due to \eqref{c-ricidentity}
	and then all derivatives of $A$ with weight $\leq 2k+1$ are uniformly bounded. It remains to estimate $\nabla_\xi^{k+1} A$. 
	One can easily use induction and \eqref{c-reeb} to show the sub-Laplacian of $\nabla_\xi^{k+1} A$ 
	\begin{align*}
		\Delta_b \nabla_\xi^{k+1} A  =  \tilde{R} * \nabla_b^{2 k+2} A + A * \nabla_b^{2k + 2} A + \mathcal{P}_{2k+1}
	\end{align*}
	where $\mathcal{P}_{2k+1}$ involves derivatives of $A$ with weight $\leq 2k+1$ and ones of $\tilde{R}$ with weight $\leq 2k$. The commutation relation \eqref{d-commutate-tor} guarantees that the norm $||\nabla_\xi^{k+1} A||_{q, M}$ is uniformly bounded by the $S^q_{2k+2}$ norm of $A$ and thus so is $| \nabla_\xi^{k+1} A|$ by Lemma \ref{c-moser}.
\end{proof}

\section{Compactness of Sasakian pseudo-Einstein manifolds} \label{sec-sasakian}

As we know, Sasakian pseudo-Einstein manifolds with pseudo-Hermitian scalar curvature $\rho> 0$ are Einstein under D-homothetic transformations. The compactness of Einstein manifolds are well studied (cf. \cite{anderson1989ricci}). This section aims to investigate the other two cases with $\rho =0$ and $ \rho < 0$. 
Let's say that a pseudo-Hermitian $(2n+1)$-manifold is normalized if $\rho = \pm n (n+1)$ or $0$. 
Our main theorem is as follows:

\begin{theorem} \label{d-sasacptness}
	Given constants $d, V_1$ and $\Lambda$, the class of normalized closed Sasakian pseudo-Einstein $(2n+1)$-manifolds with dimension $2n+1 \geq 5$ and
	\begin{align}
		||\tilde{R}||_{\frac{2n+1}{2}} \leq \Lambda, \quad  \mbox{diam}_{cc} \leq d, \quad  \mbox{Vol} \geq V_1 \label{f-condition}
	\end{align}
	% \begin{enumerate}[(1)]
	% 	%\item $R_* = 2 (n+1) \kappa_1 G_\theta$ for $\kappa_1 = -1 , 0$ or $1$, \label{d-constantscalar}
	% 	\item $||\tilde{R}||_{\frac{2n+1}{2}, M} \leq \kappa_3$, \label{d-riemcondition}
	% 	\item $ \mbox{diam}_{cc} \leq D$, \label{d-diam}
	% 	\item $ \mbox{Vol} \geq V_1$, \label{d-vol}
	% \end{enumerate}
	is compact in $C^\infty$ topology.
\end{theorem}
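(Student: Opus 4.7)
The strategy is to reduce to Riemannian convergence of the Webster metric, recover the CR structure from the Levi-Civita identities, and then bootstrap via the pseudo-Einstein system.

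First, since $A \equiv 0$ (Sasakian), the contracted Bianchi identities together with \eqref{b-tordiv} force $\nabla_b\rho = 0$ and $\xi\rho = 0$, so $\rho$ is a global constant, and by the normalization assumption $\rho \in \{0, \pm n(n+1)\}$. With $\tau = 0$ and $R_* = (\rho/n)G_\theta$, Lemma \ref{b-ricrelation} collapses to $\hat{Ric} = (\rho/n - 2)\pi_H + 2n\,\theta\otimes\theta$, so the Riemannian Ricci of the Webster metric is pointwise bounded by a constant depending only on $n$. Moreover, the Sasakian reduction of \eqref{b-pseudoriemcur} (where $S = 0$, $\mathcal{O} = -\pi_H$, $L = J$) controls $\hat R$ pointwise by $\tilde R$ plus universal algebraic terms in $(g_\theta, \theta, J)$, and hence $\|\hat R\|_{(2n+1)/2} \leq C(\Lambda + 1)$.

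The Webster metrics $(M_i, g_{\theta_i})$ now satisfy uniform bounds on $|\hat{Ric}|$, on the Riemannian diameter (via $d_{\mathrm{Riem}} \leq d_{cc} \leq d$), on volume from below, and on the critical $L^{(2n+1)/2}$ norm of the Riemannian curvature $\hat R$. Anderson-type convergence results (cf.\ \cite{anderson1989ricci, anderson1990convergence, anderson1992compactness}) then extract a subsequence that $C^{1,\beta}$-converges to a Riemannian limit $(M, g)$ for every $\beta \in (0,1)$. To transfer this convergence to the CR structure, observe that $A = 0$ reduces \eqref{c-levi3}--\eqref{c-levi4} to the algebraic first-order relations $D_X\theta(Y) = g(JX,Y)$ and $D_XJ(Y) = -g(X,Y)\xi + \theta(Y)X$, which express $(\theta, J)$ directly in terms of $g$. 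Running the coordinate bootstrap of Lemma \ref{c-thm-riemcur} on this simpler system yields $C^{1,\alpha}$-subconvergence of $(\theta_i, J_i)$.

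To upgrade $C^{1,\alpha}$ to $C^\infty$, one works in harmonic coordinates for the limit Webster metric. By \eqref{b-riempseudoric} with $A = 0$, $\hat{Ric}$ is a smooth algebraic function of $(g, \theta, J)$, so the Riemannian Ricci equation together with the parallel conditions $\nabla\theta = 0$ and $\nabla J = 0$ forms a closed quasilinear elliptic system for $(g, \theta, J)$; Schauder iteration then lifts $C^{k,\alpha}$ regularity to $C^{k+1,\alpha}$ for every $k$. Equivalently, the Sasakian specialization of \eqref{c-subcur2} is $\Delta_b|\tilde R| + C|\tilde R|^2 \geq 0$, and once the metric is $C^{1,\alpha}$-controlled an $\varepsilon$-regularity argument upgrades the critical $L^{(2n+1)/2}$ bound on $\tilde R$ to a uniform pointwise bound, at which point Theorem \ref{c-smgeneralthm} (whose hypotheses on $A$ are trivially satisfied) closes the bootstrap. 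The main obstacle is precisely this critical exponent: the CR Sobolev inequality \eqref{b-crsobolev} requires $q > Q$ with $Q \geq 2n+2$ (sharp), or $Q = 3(n+3)$ in the Sasakian version of \cite{chang2017pseudo} (see Remark \ref{c-rmk-1}), both of which exceed our $q = 2n+1$, so the argument must route through Riemannian convergence theory, where the pseudo-Einstein condition itself supplies the needed Ricci bound.
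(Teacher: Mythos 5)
Your proposal is correct in outline and follows the same overall route as the paper: both exploit that for a Sasakian pseudo-Einstein manifold the torsion terms vanish, $\rho$ is constant, $\hat{Ric}=(\rho/n-2)\pi_H+2n\,\theta\otimes\theta$ is an explicit algebraic tensor in $(g_\theta,\theta)$, and $\hat{R}$ is controlled pointwise by $\tilde{R}$, so that the critical $L^{(2n+1)/2}$ hypothesis can be fed into Anderson's theory for the Webster metrics rather than into the CR Sobolev inequality (whose restriction $q>Q$ you correctly identify as the obstacle). Where you diverge is the $C^{1,\alpha}\to C^\infty$ upgrade. The paper does not pass through $C^{1,\beta}$ metric convergence at all: it invokes the proof of Anderson's Theorem A$'$ only to get the pointwise bound $\sup_M|\hat{R}|\le C$, then runs an explicit induction on $\sup_M|D^k\hat{R}|$ using the Riemannian Bochner formula $\Delta D^{k+1}\hat{R}=D^{k+1}\hat{R}*\hat{R}+\sum_l D^l\hat{R}*D^{k+1-l}\hat{R}+D^{k+3}\hat{Ric}$, Moser iteration, and integration by parts; the engine making this work is the observation (your first paragraph contains its germ) that $D\theta=d\theta$ and $Dd\theta=\theta*g_\theta$ close up algebraically, so $D^k\hat{Ric}$ is a priori uniformly bounded for \emph{every} $k$ \emph{before} any convergence is known. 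With all $D^k\hat{R}$ bounded, Theorem \ref{b-riemsmoothconvergence} gives $C^\infty$ metric convergence directly, and Lemma \ref{d-sasastrcpt} recovers $(\theta,J)$. Your harmonic-coordinate bootstrap of the coupled system (Ricci equation for $g$ plus the first-order parallelism relations for $\theta,J$) is a legitimate alternative and closes: $g\in C^{k,\alpha}\Rightarrow\Gamma\in C^{k-1,\alpha}\Rightarrow\theta,J\in C^{k,\alpha}\Rightarrow\hat{Ric}\in C^{k,\alpha}\Rightarrow g\in C^{k+2,\alpha}$; it buys a more structural, PDE-theoretic picture at the cost of having to control the harmonic atlases uniformly along the sequence. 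Your second suggested route, however, is under-justified as stated: an $\varepsilon$-regularity bound giving $\sup|\tilde{R}|\le C$ only verifies the $\Gamma_0$ hypothesis of Theorem \ref{c-smgeneralthm} and hence only yields $C^{1,\alpha}$; to reach $C^\infty$ that way you would still need uniform bounds on $\|\tilde{R}\|_{\Gamma_{2k}}$ for all $k$, which is exactly what the paper's induction (or your harmonic-coordinate argument) supplies and what ``closes the bootstrap'' silently elides.
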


Here we only need $L^{n+\frac{1}{2}}$ norm of $\tilde{R}$ and improve Corollary \ref{d-cor-einstein}.
To prove Theorem \ref{d-sasacptness},
let's first obtain the smooth convergence of pseudo-Hermitian structures and almost complex structures from the convergence of metrics.

\begin{lemma} \label{d-sasastrcpt}
	Let $(M_i, HM_i, J_i, \theta_i)$ be a family of closed Sasakian manifolds. Assume $(M_i , g_{\theta_i})$ $C^\infty$ converge to $(M, g)$. Then there exists a smooth pseudo-Hermitian structure $\theta$ and a smooth almost complex structure $J$ such that $(M, HM, J, \theta)$ is the limit of a subsequence of $(M_i, HM_i J_i, \theta_i)$ and is also Sasakian.
\end{lemma}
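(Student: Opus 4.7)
The plan is to work on a fixed manifold $M$ via the diffeomorphisms implementing the Riemannian convergence, recover the Reeb vector field as a limit of Killing vector fields for the ambient Webster metrics, and then reconstruct $\theta$ and $J$ algebraically from $\xi$ and $g$.

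First observe that for each Sasakian $(M_i, HM_i, J_i, \theta_i)$, the identity \eqref{c-levi3} with $A_i \equiv 0$ together with $\theta_i = g_{\theta_i}(\xi_i, \cdot)$ and $Dg_{\theta_i}=0$ yields $D^{(i)}_X \xi_i = J_i X$, where $D^{(i)}$ is the Levi-Civita connection of $g_{\theta_i}$. Since $g_{\theta_i}$ is $J_i$-invariant, $J_i$ is skew-symmetric with respect to $g_{\theta_i}$, so $g_{\theta_i}(D^{(i)}_X\xi_i,Y)+g_{\theta_i}(D^{(i)}_Y\xi_i,X)=g_{\theta_i}(J_iX,Y)+g_{\theta_i}(J_iY,X)=0$, i.e.\ $\xi_i$ is a unit Killing vector field on $(M_i, g_{\theta_i})$. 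After pulling back to the fixed manifold $M$, we obtain metrics $g_i \to g$ smoothly together with unit Killing vector fields $\xi_i$ of $g_i$.

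Killing vector fields on a Riemannian manifold satisfy a second-order linear elliptic equation of the form $\nabla^*\nabla \xi_i + \widehat{\mathrm{Ric}}_{g_i}(\xi_i) = 0$, obtained by tracing the standard identity $(D^{(i)})^2_{Y,Z}\xi_i = \hat R_{g_i}(\xi_i, Y)Z$; the coefficients are smooth in $g_i$ and its derivatives and hence converge smoothly. Combining the uniform bound $|\xi_i|_{g_i} = 1$ with elliptic bootstrapping against the smoothly converging coefficients gives uniform $C^k$ bounds on $\xi_i$ for every $k$, so after passing to a subsequence $\xi_i \to \xi$ in $C^\infty$, where $\xi$ is a unit Killing vector field on $(M, g)$.

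Set $\theta := g(\xi, \cdot)$ and $J := D\xi$ (as an endomorphism of $TM$); then $\theta_i = g_i(\xi_i,\cdot) \to \theta$ smoothly, and by the formula $J_i X = D^{(i)}_X \xi_i$ derived above, $J_i \to J$ in $C^\infty$. Positivity of the Levi form and the algebraic conditions $J\xi=0$, $J^2=-I+\theta\otimes\xi$ all pass to the smooth limit, so $(M, HM := \ker\theta, J, \theta)$ is a pseudo-Hermitian manifold; Sasakian-ness is equivalent to the identity $(D_X J)Y = -g(X,Y)\xi + \theta(Y)X$, which is the $A=0$ specialization of \eqref{c-levi4}, and since this identity holds for each $(g_i, J_i, \theta_i, \xi_i)$ it descends to the smooth limit. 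The principal technical step is the elliptic extraction of a $C^\infty$-convergent subsequence of Killing fields with smoothly varying coefficients; every other ingredient is essentially an algebraic consequence of the convergence of $g_i$ and $\xi_i$.
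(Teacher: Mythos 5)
Your proof is correct, but it takes a genuinely different route from the paper's. The paper pulls everything back to a fixed $M$ and re-runs the bootstrap of Lemma \ref{c-thm-riemcur}: the uniform algebraic bounds $|\theta_i|_{g_i}=1$, $|J_i|_{g_i}^2=2n$ together with the first-order system \eqref{c-levi3}--\eqref{c-levi4} (whose coefficients are Christoffel symbols and hence converge smoothly) give uniform $C^k$ bounds on the components of $\theta_i$ and $J_i$ for every $k$; after extracting a subsequence $\theta_i\to\theta$, $J_i\to J$, Sasakian-ness of the limit is read off from $\tau_i = D^{(i)}\xi_i - J_i \to D\xi - J = \tau$ with $\tau_i\equiv 0$. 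You instead compress the whole structure into the single object $\xi_i$, observe via $D^{(i)}\xi_i=J_i$ (skew) that it is a unit Killing field, run elliptic regularity on the Bochner identity for Killing fields to get $\xi_i\to\xi$ in $C^\infty$ along a subsequence, and then \emph{define} $\theta=g(\xi,\cdot)$ and $J=D\xi$, letting the structural identities pass to the limit. Your argument exploits the Sasakian hypothesis more aggressively and is arguably cleaner here (one limiting object instead of two, and standard Killing-field theory in place of the component-wise bootstrap), whereas the paper's argument is the one that also works verbatim for pseudo-Hermitian manifolds with torsion, which is why it is organized around Lemma \ref{c-thm-riemcur}. Two small points to tighten: to start your bootstrap from the bare bound $|\xi_i|_{g_i}=1$ you should either pass through $L^p$ elliptic estimates before Schauder, or simply note that $|D^{(i)}\xi_i|=|J_i|=\sqrt{2n}$ (indeed the pointwise identity $D^2_{Y,Z}\xi_i=\hat R_i(\xi_i,Y)Z$ then bounds all higher derivatives directly, with no elliptic theory needed); and, like the paper, you leave implicit that the CR integrability condition \eqref{a-integrable} is closed under $C^1$ convergence of $J$, which is harmless but worth a sentence.
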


\begin{proof}
	Without loss of generality, assume that $M_i = M$. The proof of Lemma \eqref{c-thm-riemcur} shows that there exists a subsequence, also denoted by $\theta_i, J_i$ such that $\theta_i \to \theta$ and $J_i \to J$ in $C^\infty$ topology of $M$. 
	The formula \eqref{b-con} shows that
	\begin{align}
	\tau_i = D^i \xi_i - J_i. \label{d-1}
	\end{align}
	Since $g_i$ $C^\infty$ converge to $g$, then $D^i$ tends to $D$ in $C^\infty$. By taking the limit of \eqref{d-1}, we find
	\begin{align*}
	\tau = D \xi - J = \lim_{i \to \infty} (D^i \xi_i - J_i) = \lim_{i \to \infty} \tau_i =0.
	\end{align*}
	Hence $(M, HM, J, \theta)$ is Sasakian where $HM = \mbox{Ker} \theta$.
\end{proof}

Thus the proof of Theorem \ref{d-sasacptness} remains to show the $C^\infty$ convergence of metrics. By Theorem \ref{b-riemsmoothconvergence}, it suffices to prove the uniform bounds of all derivatives of Riemannian curvatures. 
Roughly speaking, Theorem A' in \cite{anderson1989ricci} produces the estimate of $L^\infty$ norm of curvatures and the pseudo-Einstein condition with CR Bochner formulae lifts the regularities as same as Theorem \ref{c-smcptpseudothm}.

% Before it, we recall the following theorem:

% \begin{theorem}[Theorem A' in \cite{anderson1989ricci}] \label{d-uniformboundcur}
% 	The space of compact $n$-dimensional Riemannian manifolds $M$ such that
% 	\begin{enumerate}[({A}1)]
% 		\item $|\hat{Ric}| \leq c_1$,
% 		\item $\sup_{r \leq \epsilon} r^{2(1-n/q)} [\int_{B_r} |D^2 \hat{Ric}|^{q/2} ]^{2/q} \leq c_2 (\epsilon, q) $ for some $q \geq n, \epsilon >0$,
% 		\item the shortest null-homotopic nontrivial geodesic loop $l_M \geq c_3$, \label{d-geodesiccondition}
% 		\item $\mbox{vol}_M \geq c_4$,
% 		\item $\mbox{diam}_M \leq c_5$,
% 		\item $||\tilde{R}||_{\frac{n}{2}, M} \leq c_6$
% 	\end{enumerate}
% 	has uniform bound of the Riemannian curvatures and is $C^{1,\alpha}$ compact in Lipschitz topology.

% 	If $n$ is odd, condition (A\ref{d-geodesiccondition}) on $l_M$ may be dropped.
% \end{theorem}

% The conclusion of uniform bound of Riemannian curvatures is contained in the proof. Now we come to prove Theorem \ref{d-sasacptness}.

\begin{proof}[Proof of Theorem \ref{d-sasacptness}]
	Let $(M, HM, J, \theta)$ be a Sasakian pseudo-Einstein manifold satisfying \eqref{f-condition} in this theorem.
	Since the pseudo-Hermitian torsion vanishes, then the relation \eqref{b-con} between the Levi-Civita connection $D$ of $g_\theta$ and the Tanaka-Webster connection $\nabla$ becomes 
	\begin{align}
	D= \nabla - d \theta \otimes \xi + 2 \theta \odot J \label{d-levi}
	\end{align}
	which implies that
	\begin{align}
	D \theta = d \theta , \quad D d \theta = \theta * g_\theta. \label{d-thetaj}
	\end{align}
	By Lemma \ref{b-ricrelation}, we find
	\begin{align}
	D^2 \hat{Ric} = \theta * \theta * g_\theta + d \theta * d \theta \label{d-derici}
	\end{align}
	which is uniformly bounded. So is $D^{k} \hat{Ric}$ for all $k$.
	The proof Theorem A' in \cite{anderson1989ricci} guarantees that
	\constantnumber{cst-7}
	\begin{align}
	\sup_{M} |\hat{R}|_{g_\theta} \leq C_{\ref*{cst-7}}, \label{d-zeroreg}
	\end{align}
	where $C_{\ref*{cst-7}} = C_{\ref*{cst-7}} (\Lambda, V_1, d)$. 

	% By \eqref{b-riempseudoric} and Bishop-Gromov volume comparison theorem, the condition \eqref{d-vol} implies the uniform lower bound of the volume $\mbox{vol } B^{Riem}_1 $ of Riemannian balls. Due to \eqref{d-zeroreg}, J. Cheeger showed that the injective radius of this Riemannian manifold has uniform lower bound, that is $inj_M \geq i_0$ (cf. Lemma 51 of Chapter 10 in \cite{petersen2006riemannian}).

	For the estimate of higher derivatives of $D^k \hat{R}$, we use induction to prove
	\begin{align}
	\sup_M | D^k \hat{R} |_{g_\theta} \leq \Lambda_k . \label{d-regclaim}
	\end{align} 
	The case $k=0$ has been proved in \eqref{d-zeroreg}. Assume that the estimate \eqref{d-regclaim} holds for all cases $\leq k$.
	Now we consider the case $k+1$. Note that 
	\begin{align} \label{d-subcurreg1}
	\Delta D^{k+1} \hat{R} = D^{k+1} \hat{R} * \hat{R} + \sum_{l = 1}^k D^l \hat{R} * D^{k + 1 -l} \hat{R} + D^{k+3} \hat{Ric},
	\end{align}
	\constantnumber{cst-8}
	which implies 
	\begin{align}
	\Delta | D^{k+1} \hat{R} | + C_{\ref*{cst-8}} | D^{k+1} \hat{R} | + C_{\ref*{cst-8}} \geq 0. \label{d-subcurreg2}
	\end{align}
	where $C_{\ref*{cst-8}} = C_{\ref*{cst-8}} (n, k)$. We apply the Riemannian Sobolev inequality (cf. \cite{anderson1989ricci}) and the similar argument of Lemma \ref{c-moser} to \eqref{d-subcurreg2} with $p_0 =2$. The result is
	\constantnumber{cst-9}
	\begin{align}
	\sup_M | D^{k+1} \hat{R} | \leq C_{\ref*{cst-9}} ( ||D^{k+1} \hat{R}||_{2, M} +1 ),
	\end{align}
	where $C_{\ref*{cst-9}} = C_{\ref*{cst-9}} (k, \Lambda, V_1, d) $. Next we will estimate $||D^{k+1} \hat{R}||_{2, M}$. Using the Stokes' formula and a $k$-th version of \eqref{d-subcurreg2}, we have
	\constantnumber{cst-10}
	\begin{align*}
	& \int_M | D^{k+1} \hat{R} |^2 = - \int_M \langle \Delta D^k \hat{R}, D^k \hat{R} \rangle \\
	& \leq \int_M \langle D^k \hat{R} * \hat{R}, D^k \hat{R} \rangle + \sum_{l =1}^{k-1} \int_M \langle D^l \hat{R} * D^{k-l} \hat{R}, D^k \hat{R} \rangle + \int_M \langle D^{k+2} \hat{Ric} , D^k \hat{R} \rangle \\
	& \leq C_{\ref*{cst-10}},
	\end{align*}
	which gives \eqref{d-regclaim}. 

	Suppose $(M_i, HM_i, J_i, \theta_i)$ is a sequence of normalized closed connected Sasakian pseudo-Einstein manifolds. The estimate \eqref{d-regclaim} shows the uniform bounds of all covariant derivatives of $\hat{R}$.
	Hence by Theorem \ref{b-riemsmoothconvergence}, we obtain the $C^\infty$ sub-convergence of the metric $g_{\theta_i}$. Applying Lemma \ref{d-sasacptness}, the structures $\theta_i$ and $J_i$ both $C^\infty$ converge and the limit $(M, HM, J, \theta)$ is Sasakian pseudo-Einstein.
\end{proof}

\begin{remark}
	The proof of Theorem \ref{d-sasacptness} only requires the upper bound of Riemannian distance with respect to Webster metric which seems weaker than one of Carnot-Carath\'eodory distance. But they are equivalent for normalized Sasakian pseudo-Einstein manifolds (cf. Theorem 3 in \cite{baudoin2014volume}).
\end{remark}

\begin{remark}
	There is another generalization of Einstein notion in Sasakian geometry, which is called Sasakian $\eta$-Einstein (cf. \cite{boyer2006eta}). It means that the Riemannian Ricci curvature satisfies
	\begin{align}
	\langle \hat{Ric} (X), Y \rangle = \lambda \langle X, Y \rangle + \mu \theta(X) \theta(Y), \mbox{ for } X, Y \in \Gamma (TM) ,
	\end{align}
	where $\lambda$ and $\mu$ are constants.
	The concepts of pseudo-Einstein and $\eta$-Einstein are equivalent in Sasakian geometry by the following lemma. In other words, Theorem \ref{d-sasacptness} gives the compactness of Sasakian $\eta$-Einstein manifolds. 
\end{remark}

\begin{lemma}
	Let $(M, HM, J, \theta)$ is a $(2n+1)$-Sasakian manifold. Then it is $\eta$-Einstein if and only if it is pseudo-Einstein.
\end{lemma}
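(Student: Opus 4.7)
The plan is to derive both directions of the equivalence directly from the relation between the Riemannian Ricci tensor and the pseudo-Hermitian Ricci operator established in Lemma~\ref{b-ricrelation}. Since $(M, HM, J, \theta)$ is Sasakian, the pseudo-Hermitian torsion vanishes identically, so $\tau = 0$, $A = 0$, $|\tau|^2 = 0$, $\nabla_\xi \tau = 0$, and $\mathrm{div}\,\tau = 0$. Substituting these into the identity of Lemma~\ref{b-ricrelation} collapses every torsion-dependent term and produces the clean formula
\begin{align*}
\langle \hat{Ric}(X), Y \rangle = \langle R_* X, Y \rangle - 2 \langle \pi_H X, \pi_H Y \rangle + 2n\, \theta(X) \theta(Y).
\end{align*}

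For the forward direction, I assume the pseudo-Einstein condition $R_* = \frac{\rho}{n} G_\theta$. Writing $G_\theta(X,Y) = \langle \pi_H X, \pi_H Y \rangle = \langle X, Y \rangle - \theta(X)\theta(Y)$ and substituting into the displayed identity directly expresses $\hat{Ric}$ in the form $\lambda\, g_\theta + \mu\, \theta \otimes \theta$ with $\lambda = \tfrac{\rho}{n} - 2$ and $\mu = 2n + 2 - \tfrac{\rho}{n}$, which is precisely the $\eta$-Einstein condition. Conversely, assuming $\hat{Ric} = \lambda\, g_\theta + \mu\, \theta \otimes \theta$ and substituting into the same identity gives
\begin{align*}
\langle R_* X, Y \rangle = \lambda \langle X, Y \rangle + (\mu - 2n)\theta(X)\theta(Y) + 2 \langle \pi_H X, \pi_H Y \rangle,
\end{align*}
and restricting both $X$ and $Y$ to $HM$ collapses the $\theta$-terms and yields $\langle R_* X, Y \rangle = (\lambda + 2) \langle X, Y \rangle$, which is pseudo-Einstein with $\rho = n(\lambda + 2)$.

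There is essentially no obstacle: the proof is a direct algebraic computation once Lemma~\ref{b-ricrelation} is specialized to the Sasakian setting. The only care required is to distinguish between $G_\theta$, which annihilates the Reeb direction, and $g_\theta$, which does not, and to observe that $R_* \xi = 0$ follows from $J \xi = 0$, so the pseudo-Einstein condition is genuinely a condition on the horizontal part of $R_*$. Consistency of the converse on the Reeb direction then forces the relation $\lambda + \mu = 2n$, which also matches the standard fact $\hat{Ric}(\xi, \xi) = 2n$ on any Sasakian manifold and hence imposes no nontrivial restriction.
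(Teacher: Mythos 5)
Your reduction of Lemma \ref{b-ricrelation} to the Sasakian identity
\begin{align*}
\langle \hat{Ric}(X), Y \rangle = \langle R_* X, Y \rangle - 2 \langle \pi_H X, \pi_H Y \rangle + 2n\, \theta(X) \theta(Y)
\end{align*}
is exactly the paper's starting point, and your converse direction ($\eta$-Einstein $\Rightarrow$ pseudo-Einstein, by restricting to $HM$) is correct; in fact your constant $\lambda+2$ there is right where the paper's displayed $(\lambda+\mu)$ appears to be a slip.

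The gap is in the forward direction. The paper's definition of $\eta$-Einstein requires $\lambda$ and $\mu$ to be \emph{constants}, whereas the pseudo-Einstein condition $R_* = \frac{\rho}{n} G_\theta$ does not a priori assume $\rho$ constant. Your substitution produces $\lambda = \frac{\rho}{n} - 2$ and $\mu = 2n + 2 - \frac{\rho}{n}$, which are functions unless you first prove that $\rho$ is constant; as written, you have only shown that $\hat{Ric}$ is pointwise of the form $\lambda g_\theta + \mu\, \theta\otimes\theta$ with varying coefficients. The paper closes this by noting that in the Sasakian case $\operatorname{div}\tau = 0$, so the contracted Bianchi identity \eqref{b-tordiv} gives $\nabla_b \rho = 0$ under the pseudo-Einstein assumption (for $2n+1 \geq 5$), and the identity for $\rho_0$ gives $\rho_0 = 0$ since $A = 0$; together these force $\rho$ to be constant. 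You need to add this Schur-type step (or an equivalent one) before concluding $\eta$-Einstein. Everything else in your argument, including the observation that $R_*\xi = 0$ and the consistency check $\lambda + \mu = 2n$, is sound.
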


\begin{proof}
	Since $(M, HM, J \theta)$ is Sasakian, by \eqref{b-tor6} and \eqref{b-tor7}, $R(\xi, \cdot) \cdot = 0$. Moreover, Lemma \ref{b-ricrelation} shows that for any $X, Y \in \Gamma (TM) $,
	\begin{align}
	\langle \hat{Ric} (X), Y \rangle = \langle R_* X, Y \rangle - 2 \langle \pi_H X, \pi_H Y \rangle + 2 n \theta (X) \theta (Y). \label{b-sasakianric}
	\end{align}
	
	Suppose $(M, HM, J \theta)$ is $\eta$-Einstein. For any $X, Y \in \Gamma (HM) $, by \eqref{b-sasakianric}, we have 
	\begin{align*}
	\langle R_* X, Y \rangle = \langle \hat{Ric} (X), Y \rangle + 2 \langle X, Y \rangle = (\lambda+\mu) \langle X, Y \rangle,
	\end{align*}
	which shows that it is also pseudo-Einstein.

	Suppose $(M, HM, J, \theta)$ is pseudo-Einstein. Then the pseudo-Hermitian scalar curvature $\rho$ is constant by \eqref{b-tordiv}. Thus by \eqref{b-sasakianric}, we have for any $X, Y \in \Gamma (TM)$
	\begin{align*}
	\langle \hat{Ric} (X), Y \rangle &= (\frac{\rho}{n} - 2) \langle \pi_H X, \pi_H Y \rangle + 2 n \theta (X) \theta (Y) \\
	& = (\frac{\rho}{n} - 2) \langle X,  Y \rangle + (2 n - \frac{\rho}{n} +2 ) \theta (X) \theta (Y),
	\end{align*}
	which shows that it is $\eta$-Einstein.
\end{proof}

As a simple consequence of Theorem \ref{d-sasacptness}, we would deduce some pointed compactness of K\"ahler cones. Suppose that $(M, HM, J, \theta)$ is a Sasakian $(2n+1)$-manifold. Its K\"ahler cone is the product manifold $CM = \mathbb{R}_+ \times M$ with metric
\begin{align*}
h = d t^2 + t^{2} g_\theta
\end{align*}
and complex structure
\begin{align*}
\mathfrak{J} = J + dt \otimes (t^{-1} \xi)  - (t \theta) \otimes \partial_t,
\end{align*}
where $t$ is the coordinate of $\mathbb{R}_+$ (cf. \cite{boyer19983sasakian}).
The link $\{1\} \times M$ with the induced CR structure is identified with the generator $(M, HM, J, \theta)$.
As we know, Sasakian manifold is Einstein if and only if its K\"ahler cone is Ricci-flat.
Actually, one can easily obtain the following relationship between pseudo-Hermitian Ricci curvature $R_*$ of Sasakian manifold $(M, HM, J, \theta)$ and Ricci curvature $\mathfrak{Ric}$ of its K\"ahler cone $(CM, \mathfrak{J}, h)$:
\begin{align}
h ( \mathfrak{Ric} (X), Y ) = t^{-2} G_\theta \big( (R_* - (2n+2)) \pi_{TM} X , \pi_{TM} Y \big). \label{d-ricci-cone}
\end{align}
where $\pi_{TM}$ is the projection from $T(CM)$ to $TM$.
By Theorem \ref{d-sasacptness}, we have the following corollary.

\begin{corollary} \label{d-corollary-cone}
	Given constants $d, V_1$ and $\Lambda$, the class of complete Ricci-flat K\"ahler cones with dimension $2n+2$ and their Sasakian links satisfying
	\begin{align}
	||\tilde{R}||_{\frac{2n+1}{2}} \leq \Lambda , \quad \mbox{diam}_{cc} \leq d, \quad  \mbox{Vol} \geq V_1
	\end{align}
	is pointed $C^\infty$ compact.
\end{corollary}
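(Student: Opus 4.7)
The plan is to use the relation \eqref{d-ricci-cone} to reduce the problem to the compact Sasakian links, apply Theorem \ref{d-sasacptness} to extract $C^\infty$ sub-convergence of the links, and then lift this to pointed $C^\infty$ convergence of the cones via the product structure $CM = \mathbb{R}_+ \times M$.

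Let $(CM_i, h_i, \mathfrak{J}_i)$ be a sequence of complete Ricci-flat K\"ahler cones satisfying the hypotheses, with Sasakian generators $(M_i, HM_i, J_i, \theta_i)$. The identity \eqref{d-ricci-cone} shows that $\mathfrak{Ric}_i \equiv 0$ is equivalent to $R_*^{(i)} = (2n+2) G_{\theta_i}$, so each link is Sasakian pseudo-Einstein with constant scalar curvature $\rho_i = 2n(n+1)$. After a single constant D-homothetic rescaling $\theta_i \mapsto c\theta_i$ (with $c>0$ chosen once and for all so that the new scalar curvature is the normalized value $n(n+1)$), the links become normalized Sasakian pseudo-Einstein. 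Since the horizontal metric, the volume form, the CC distance, and the pseudo-Hermitian curvature all rescale by fixed powers of $c$, the hypothesized bounds $d, V_1, \Lambda$ remain uniform (with modified universal constants). Thus Theorem \ref{d-sasacptness} applies to the rescaled sequence and yields a subsequence (still indexed by $i$) together with diffeomorphisms $\phi_i : M \to M_i$ such that $\phi_i^* \theta_i \to \theta$ and $\phi_i^* J_i \to J$ in $C^\infty$, where $(M, HM, J, \theta)$ is Sasakian pseudo-Einstein. Reversing the D-homothety restores the original normalization without affecting the order of regularity.

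To lift the convergence, fix a base point $x_0 \in M$ and set $p = (1, x_0) \in CM$ and $p_i = (1, \phi_i(x_0)) \in CM_i$. Define the product diffeomorphisms $\Phi_i : CM \to CM_i$ by $\Phi_i(t, x) = (t, \phi_i(x))$. Pulling back the cone data gives
\begin{align*}
\Phi_i^* h_i = dt^2 + t^2 \phi_i^* g_{\theta_i}, \quad \Phi_i^* \mathfrak{J}_i = \phi_i^* J_i + dt \otimes (t^{-1} \phi_i^* \xi_i) - (t\, \phi_i^* \theta_i) \otimes \partial_t.
\end{align*}
On any slab $[a, b] \times M$ with $0 < a < b < \infty$, these tensors are smooth functions of $t$ times pullbacks of objects on $M$ that converge in $C^\infty$, so $\Phi_i^* h_i \to h$ and $\Phi_i^* \mathfrak{J}_i \to \mathfrak{J}$ in $C^\infty([a,b] \times M)$. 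Since the slabs exhaust $CM$ as $a \to 0^+$ and $b \to \infty$, this is precisely pointed $C^\infty$ convergence at $p$. Finally, the limit $(CM, h, \mathfrak{J})$ is a K\"ahler cone built canonically from the Sasakian link $(M, HM, J, \theta)$, and its Ricci-flatness follows from \eqref{d-ricci-cone} and the limit identity $R_* = (2n+2) G_\theta$.

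The one technical point that needs care is the scaling bookkeeping in the D-homothety: one must identify the weights of $G_\theta$, $\theta \wedge (d\theta)^n$, $d_{cc}$, and $\tilde{R}$ under $\theta \mapsto c\theta$ and verify that the hypotheses translate into uniform hypotheses for Theorem \ref{d-sasacptness}. This is routine. Once done, the remainder is essentially formal, combining the compactness of the links with the evident product structure of the cone.
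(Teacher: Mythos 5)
Your proposal is correct and follows the same route the paper intends for this corollary: identify Ricci-flatness of the cone with the link being Sasakian pseudo-Einstein via \eqref{d-ricci-cone}, apply Theorem \ref{d-sasacptness} to the links, and lift the resulting $C^\infty$ convergence to pointed convergence of the cones through the product structure on compact slabs. The paper states the corollary without a written proof, and your D-homothetic rescaling correctly handles the one point it glosses over, namely that the links of Ricci-flat cones have $\rho = 2n(n+1)$, which is not literally among the normalized values $\pm n(n+1)$ or $0$ required in Theorem \ref{d-sasacptness}.
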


\section{Pseudo-Hermitian Ricci Bounded From Below} \label{sec-pinching}

In this section, we deduce a weak version of Theorem \ref{c-pseudocptness} to relax the pseudo-Einstein condition.

\begin{theorem} \label{c-weakcptthm}
	Given $\kappa_1 , d, V_1, \Lambda$ and $p > 2n+1$ for any integer $n \geq 1$, there exists $\epsilon = \epsilon (n, p , d ) > 0$ such that any sequence of closed pseudo-Hermitian manifolds $(M_i, HM_i, J_i, \theta_i)$ with dimension $2n+1$ and
	\begin{enumerate}[(1)]
		\item $ R_{*,i} \geq - 2 (n+1) \kappa_1 $ , 
		\item $ ||A_i||_{S^p_1 (M_i)}, ||\nabla_{\xi_i} A||_{p, M_i}, ||A_i^2||_{p, M_i} \leq \epsilon \left( \mbox{Vol} (M_i) \right)^{\frac{1}{p}} $, \label{c-weaktor}
		\item $ ||\tilde{R}_i||_{p, M_i} \leq \kappa_3 $,
		\item $\mbox{diam}_{cc} (M_i) \leq d$,
		\item $ \mbox{Vol} (M_i) \geq V_1$, 
	\end{enumerate}
	is $C^{1, \alpha}$ convergent for any $\alpha < 1- \frac{p}{2n +1} $.
\end{theorem}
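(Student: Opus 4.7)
The strategy is to reduce Theorem \ref{c-weakcptthm} to a Riemannian convergence problem for the Webster metrics $g_{\theta_i}$ under integral Ricci bounds, and then lift the Riemannian convergence to the pseudo-Hermitian data. Using Lemma \ref{b-ricrelation} I would write
\begin{equation*}
\hat{Ric} = R_* - 2\pi_H + 2n\,\theta\otimes\theta + E,
\end{equation*}
where $E$ collects the terms linear and quadratic in $A$ together with $\nabla_\xi\tau$ and $\mathrm{div}\,\tau\otimes\theta$. The hypotheses $\|A_i\|_{S^p_1}, \|\nabla_{\xi_i}A_i\|_p, \|A_i^2\|_p \leq \epsilon\,\mathrm{Vol}(M_i)^{1/p}$ force $\|E\|_{p,M_i}\leq C(n)\,\epsilon\,\mathrm{Vol}(M_i)^{1/p}$, which combined with $R_{*,i}\geq -2(n+1)\kappa_1$ yields
\begin{equation*}
\mathrm{Vol}(M_i)^{-1/p}\|\hat{Ric}_i^-\|_{p,M_i} \leq C(n,\kappa_1) + C(n)\,\epsilon.
\end{equation*}
Similarly \eqref{b-pseudoriemcur} together with $\|\tilde{R}_i\|_p\leq\kappa_3$ produces a uniform $L^p$ bound on the full Riemannian curvature $\hat{R}_i$ once $\epsilon$ is small.

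Next I would invoke a Petersen--Wei type integral curvature compactness theorem: closed $(2n+1)$-dimensional Riemannian manifolds with bounded $\|\hat{R}\|_p$, small $\|\hat{Ric}^-\|_p$ for $p>2n+1$, bounded diameter, and volume bounded below are $C^{1,\alpha}$ precompact for the sharp Sobolev exponent $\alpha<1-(2n+1)/p$. The Riemannian diameter bound is free from $d_{\mathrm{Riem}}\leq d_{cc}\leq d$. In harmonic coordinates one has $\Delta g_{ab}=-2\hat{Ric}_{ab}+Q(g,\partial g)$, so the $L^p$ bound on Ricci translates into a uniform $W^{2,p}$ bound on $g_{ab}$, which embeds into $C^{1,\alpha}$ on a $(2n+1)$-dimensional ball by Sobolev embedding.

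Given the $C^{1,\alpha}$ subconvergence $g_{\theta_i}\to g$, I would upgrade to convergence of $(\theta_i, J_i)$ along the blueprint of Lemma \ref{c-thm-riemcur}: in a common chart the components of $\theta_i$ and $J_i$ satisfy the first-order system \eqref{c-levi3}--\eqref{c-levi4} whose coefficients are the Christoffel symbols of $g_{\theta_i}$ (converging in $C^{0,\alpha}$) and $A_i$. Since $p>2n+1$, Lemma \ref{c-lem-sobolev-2} (or the Folland--Stein embedding) gives $S^p_1\hookrightarrow C^{0,\alpha}$ for $\alpha<1-(2n+1)/p$, so $A_i$ is uniformly H\"older bounded; a standard Arzel\`a--Ascoli and bootstrap argument then yields $C^{1,\alpha}$ subconvergence of $\theta_i$ and $J_i$.

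The main obstacle I expect is the Riemannian compactness step: invoking the integral curvature compactness theorem requires a uniform local Sobolev inequality, equivalently a noncollapsing/injectivity radius estimate. Lemma \ref{b-crsobolevlem} provides a CR Sobolev inequality only under \emph{pointwise} bounds on $|A|$ and $|\mathrm{div}\,A|$, which are not among the present hypotheses. The natural route is a small-perturbation argument: for $\epsilon$ sufficiently small, the heat-kernel estimates of Baudoin--Garofalo in \cite{chang2017pseudo} (proved under pointwise torsion bounds) should be stable under the $L^p$-small torsion perturbation, producing a uniform local Sobolev constant. Making this perturbation quantitative, and calibrating the exact threshold $\epsilon=\epsilon(n,p,d)$ appearing in the statement, is the technical heart of the proof and the step where the restriction $p>2n+1$ enters most delicately.
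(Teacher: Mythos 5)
Your outline follows the paper's proof almost exactly: the same decomposition of $\hat{Ric}_i$ via Lemma \ref{b-ricrelation}, the same pointwise lower bound on the smallest eigenvalue in terms of $|A_i|+|A_i|^2+|\nabla^i A_i|$, an appeal to Petersen--Wei (the paper's Theorem \ref{c-petersenwei}) for $C^{1,\alpha}$ subconvergence of the Webster metrics, and then a lift of the convergence to $(\theta_i,J_i)$ via the first-order system \eqref{c-levi3}--\eqref{c-levi4} as in Lemma \ref{c-thm-riemcur}. However, the step you single out as ``the technical heart'' is not actually an obstacle. Theorem \ref{c-petersenwei} is self-contained: the smallness hypothesis $\|\max\{-f+(n-1)\lambda,0\}\|_{L^p}\leq\varepsilon\,\mathrm{Vol}^{1/p}$ with $\varepsilon=\varepsilon(n,p,\lambda,d)$ is precisely what Petersen--Wei use to derive the relative volume comparison, noncollapsing, and local Sobolev constants internally, on the Riemannian side; no a priori CR Sobolev inequality or injectivity radius bound is needed as input. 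Your proposed detour through the heat-kernel estimates underlying Lemma \ref{b-crsobolevlem} is therefore unnecessary, and would in fact be counterproductive, since those estimates require pointwise bounds on $|A|$ and $|\mathrm{div}\,A|$ that are not among the hypotheses --- exactly the objection you raise against using them.

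A second, smaller correction: at the final step you propose to get the uniform H\"older bound on $A_i$ from Lemma \ref{c-lem-sobolev-2} or the Folland--Stein embedding, but Lemma \ref{c-lem-sobolev-2} again presupposes pointwise bounds on $|A|$ and $|\mathrm{div}\,A|$, and the Folland--Stein embedding constant is not uniformly controlled. The paper instead converts the $S^p_1$ bound on $A_i$ into a bound on the Levi--Civita covariant derivative $D A_i$ via Lemma \ref{d-lem-estimate} and applies the \emph{classical Riemannian} Sobolev embedding, which is uniform once the metrics $g_{\theta_i}$ converge (or, earlier, from the Petersen--Wei volume comparison). With these two adjustments your argument coincides with the paper's.
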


The proof follows from the following theorem (Theorem 1.4) in \cite{petersen1997relative}.

\begin{theorem} \label{c-petersenwei}
	Given an integer $n \geq 2$, and numbers $p > \frac{n}{2}, \lambda \leq 0, V_1 >0, d < \infty, \Lambda \leq \infty$, one can find $\varepsilon = \varepsilon (n, p, \lambda, d) > 0$ such that the class of closed Riemannian manifolds with dimension $n$ and
	\begin{gather*}
	\mbox{Vol} \geq V_1 , diam \leq d , ||R||_{L^p} \leq \Lambda \\
	||\max \{ - f(x) + (n -1) \lambda, 0 \}||_{L^p} \leq \varepsilon (vol)^{\frac{1}{p}}
	\end{gather*}
	where $f(x)$ is the smallest eigenvalue of Riemannian Ricci tensor,
	is precompact in $C^\alpha$ topology for any $\alpha < 2 - \frac{n}{p} $.
\end{theorem}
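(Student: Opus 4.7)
The plan is to reduce Theorem \ref{c-weakcptthm} to the Riemannian Petersen–Wei Theorem \ref{c-petersenwei}, applied to the sequence of Webster metrics $g_{\theta_i}$ on $M_i$ in ambient Riemannian dimension $2n+1$, and then to upgrade the resulting $C^{1,\alpha}$ convergence of metrics to $C^{1,\alpha}$ convergence of $(\theta_i,J_i)$ as in Lemma \ref{c-thm-riemcur}. The diameter hypothesis for Petersen–Wei is immediate from $d_{Riem}\leq d_{cc}\leq d$, and the volume hypothesis is direct since the Webster volume equals the pseudo-Hermitian volume up to a constant. So the two things to check are an $L^p$ bound on the Riemannian curvature $\hat R$ and an almost lower bound on the smallest eigenvalue of $\hat{Ric}$.

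For the curvature bound, I would use the Dragomir–Tomassini formula \eqref{b-pseudoriemcur}, which writes $\hat R = R + E(A,\tau)$ with $E$ a polynomial of degree $\leq 2$ in $A,\tau$ with coefficients built from $g_\theta,\theta,J$. The Tanaka–Webster curvature $R$ splits into the "holomorphic" piece $\tilde R$, uniformly bounded in $L^p$ by $\kappa_3$, and the "mixed" pieces $R_{\ba\beta\bl\bm}$, $R_{\ba\beta 0\bm}$ whose identities \eqref{b-tor6}, \eqref{b-tor7} express them linearly in $A$ and $\nabla A$, hence controlled by $||A||_{S^p_1}\leq \epsilon(\mathrm{Vol})^{1/p}$. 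The quadratic contributions $|A|^2$ in $E$ are absorbed by the $||A^2||_p$ hypothesis. Together this yields a uniform bound $||\hat R_i||_{p,M_i}\leq \Lambda'$.

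The key step is the Ricci estimate. Lemma \ref{b-ricrelation} gives
\begin{align*}
\langle\hat{Ric}(X),Y\rangle = \langle R_*X,Y\rangle - 2\langle \pi_HX,\pi_HY\rangle + (2n-|\tau|^2)\theta(X)\theta(Y) + E'(X,Y),
\end{align*}
where the symmetric remainder $E'$ involves only $A$, $\nabla_\xi\tau$ and $\text{div}\,\tau$, and is pointwise $\leq C_n(|A|+|A|^2+|\nabla_\xi A|+|\nabla_b A|)$. Using $R_*\geq -2(n+1)\kappa_1\,G_\theta$, the "principal" part $R_*-2\pi_H+(2n-|\tau|^2)\theta\otimes\theta$ is bounded below by $-C_1(n,\kappa_1)-C_2|A|^2$, so choosing $\lambda=\lambda(n,\kappa_1)\leq 0$ with $(2n+1-1)\lambda=-C_1$ produces
\begin{align*}
\max\{-\hat f(x)+2n\lambda,0\}\leq C_n\bigl(|A|+|A|^2+|\nabla_\xi A|+|\nabla_b A|\bigr),
\end{align*}
where $\hat f$ is the smallest eigenvalue of $\hat{Ric}$. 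Taking $L^p$ norms and invoking hypothesis \eqref{c-weaktor} gives $||\max\{-\hat f+2n\lambda,0\}||_{p}\leq C_n\epsilon(\mathrm{Vol}(M_i))^{1/p}$, and choosing our $\epsilon=\varepsilon/C_n$ with $\varepsilon=\varepsilon(2n+1,p,\lambda,d)$ from Petersen–Wei yields a $C^{1,\alpha}$ sub-convergent subsequence of $(M_i,g_{\theta_i})$ for any $\alpha<1-(2n+1)/p$.

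To conclude, I would upgrade Riemannian convergence to convergence of the full pseudo-Hermitian structure by reusing the ODE argument in Lemma \ref{c-thm-riemcur}. The identities \eqref{c-levi3}–\eqref{c-levi4} express $D\theta$ and $DJ$ algebraically in $g_\theta,\theta,J,A$, so in harmonic coordinates on the limit manifold, uniform $C^{1,\alpha}$ bounds on $g_{\theta_i}$ together with a uniform $C^0$ bound on $A_i$ give uniform $C^{1,\alpha}$ bounds on the component functions $\phi_{i;a}$ and $J_{i;a}^b$, and Arzelà–Ascoli produces the desired convergence. The main obstacle is the last step: obtaining a uniform pointwise bound on $A_i$ from the $S^p_1$ hypothesis alone. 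This requires applying CR Sobolev embedding (Lemma \ref{c-lem-sobolev-2}) in a form whose constants depend only on the quantities already controlled, and it is precisely the assumption $p>2n+1\geq Q$ (in the relevant Sobolev scale) that makes the embedding $S^p_1\hookrightarrow C^0$ available and uniform.
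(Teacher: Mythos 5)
Your proposal does not prove the statement it was asked to prove. Theorem \ref{c-petersenwei} is the Petersen--Wei compactness theorem for closed Riemannian $n$-manifolds with an $L^p$ smallness condition on the negative part of the Ricci curvature; in the paper it is quoted verbatim from Theorem 1.4 of \cite{petersen1997relative} and is not proved there at all --- it is an external input. What you have written is instead a proof of Theorem \ref{c-weakcptthm}, the CR application, and your very first sentence invokes Theorem \ref{c-petersenwei} as the main tool. Relative to the stated goal this is circular: you cannot establish the Petersen--Wei theorem by an argument whose key step is an appeal to the Petersen--Wei theorem.

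A genuine proof of Theorem \ref{c-petersenwei} would have to live entirely in Riemannian geometry and would involve none of the pseudo-Hermitian machinery you deploy: the essential ingredients in \cite{petersen1997relative} are a relative volume comparison estimate under integral (rather than pointwise) lower Ricci bounds, obtained via a maximal-function/ODE argument for the mean curvature of distance spheres, followed by a lower bound on the $L^{p}$ harmonic radius in the spirit of Anderson, and finally Cheeger--Gromov-type convergence in harmonic coordinates giving $C^\alpha$ (indeed $W^{2,p}$, hence $C^\alpha$ for $\alpha < 2 - n/p$) precompactness. Your decomposition of $\hat R$ and $\hat{Ric}$ via \eqref{b-pseudoriemcur} and Lemma \ref{b-ricrelation}, and the $S^p_1$ control of the torsion, are the right ingredients for Theorem \ref{c-weakcptthm} --- and indeed they track the paper's proof of that theorem fairly closely --- but they are addressed to the wrong statement. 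If the intended target really is Theorem \ref{c-petersenwei}, the correct response is either to reproduce the Petersen--Wei argument or to note explicitly that the result is being cited, not proved.
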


\begin{proof}[Proof of Theorem \ref{c-weakcptthm}]
	Let $\nabla^i$ be the Tanaka-Webster connection of $(M_i, HM_i, J_i, \theta_i)$.
	Due to the relation \eqref{b-riempseudoric}, the Riemannian Ricci curvature
	\begin{align*}
		\hat{Ric}_i =
		\begin{pmatrix}
			R_{*, i} - 2 I_{2n} & 0 \\
			0 & 2n 
		\end{pmatrix}
		+
		\begin{pmatrix}
			(A_i) + (\nabla_\xi^i A_i) & (\mbox{div} A_i)^T \\
			\mbox{div} A_i & - |A_i|^2
		\end{pmatrix}
	\end{align*}
	where $(A_i)$ and $(\nabla_\xi^i A_i)$ represent the linear combination of $\theta, J$ and themselves. Hence the smallest eigenvalue of $\hat{Ric}$ at $x \in M$ 
	\begin{align*}
	f_i (x) \geq - 2 (n+1) \kappa_1-2 - C (|A_i| + |A_i|^2 + |\nabla^i A_i|)
	\end{align*}
	where $C = C(n)$. Let $\lambda = \frac{- 2 (n+1) \kappa_1-2}{2n}$ and then we have 
	\begin{align*}
	\max \{ - f_i (x) + 2n \lambda, 0 \} \leq C (|A_i| + |A_i|^2 + |\nabla^i A_i|)
	\end{align*}
	Since the condition \eqref{c-weaktor} controls $L^p$ norm of $\nabla^i A_i$,
	then Theorem \ref{c-petersenwei} hold for sufficiently small $\varepsilon$. 
	Thus $(M_i, g_{\theta_i})$ will $C^{1,\alpha}$ converge to some Riemannian manifold $(M, g)$. 
	Due to Lemma \ref{d-lem-estimate}, one can easily check that the first covariant derivative of pseudo-Hermitian torsion with respect to the Levi-Civita connection has uniform classical Sobolev $L^p_1$-norm. By Sobolev embedding theorem, the $C^{0,\beta}$-norm of pseudo-Hermitian torsion $A_i$ is uniformly bounded for $\beta = 1- \frac{p}{2n +1}$. The proof will be finished by a similar argument of Lemma \ref{c-thm-riemcur}.
\end{proof}

The condition \eqref{c-weaktor} of Theorem \ref{c-weakcptthm} holds naturally in Sasakian manifolds.

\begin{corollary}
	Given $\kappa_1 , d, V_1 , \Lambda$ and $p > 2n +1$ for any positive integer $n$, any sequence of closed Sasakian manifolds with dimension $2n+1$ and
	\begin{align*}
		R_{*} \geq - 2 (n+1) \kappa_1 , \quad  ||\tilde{R}||_{L^p} \leq \Lambda, \quad \mbox{diam}_{cc} \leq d, \quad \mbox{Vol} \geq V_1 
	\end{align*}	
	% \begin{enumerate}[(1)]
	% 	\item $ R_{*} \geq - 2 (n+1) \kappa_1 G_{\theta} $, 
	% 	\item $ ||\tilde{R}||_{L^p} \leq \kappa_3 $,
	% 	\item $\mbox{diam}_{cc} \leq d$,
	% 	\item $ \mbox{Vol} \geq V_1$, 
	% \end{enumerate}
	is $C^{1, \alpha}$ sub-convergent for any $\alpha < 1- \frac{p}{2n +1} $. 
\end{corollary}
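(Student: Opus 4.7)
The plan is to apply Theorem~\ref{c-weakcptthm} directly to the given sequence. The key observation is that the Sasakian condition is defined precisely by $\tau \equiv 0$, equivalently $A \equiv 0$, so every torsion-related quantity in hypothesis (2) of Theorem~\ref{c-weakcptthm} vanishes identically on every member $(M_i, HM_i, J_i, \theta_i)$ of the sequence.

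First I would verify the hypotheses of Theorem~\ref{c-weakcptthm} one by one. Hypotheses (1), (3), (4), (5) are simply the assumed bounds $R_* \geq -2(n+1)\kappa_1$, $\|\tilde{R}\|_{L^p} \leq \Lambda$, $\mbox{diam}_{cc} \leq d$, and $\mbox{Vol} \geq V_1$. For hypothesis (2), since $A_i \equiv 0$ we have $\nabla_b A_i \equiv 0$, $\nabla_{\xi_i} A_i \equiv 0$, and $A_i^2 \equiv 0$, so
\[
\|A_i\|_{S^p_1(M_i)} = \|\nabla_{\xi_i} A_i\|_{p, M_i} = \|A_i^2\|_{p, M_i} = 0,
\]
which is trivially bounded by $\varepsilon(\mbox{Vol}(M_i))^{1/p}$ for the $\varepsilon = \varepsilon(n, p, d)$ furnished by Theorem~\ref{c-weakcptthm}.

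With all hypotheses in force, Theorem~\ref{c-weakcptthm} produces a subsequence that $C^{1,\alpha}$-converges (in the sense of convergence of pseudo-Hermitian structures defined in the introduction) to a limit pseudo-Hermitian manifold, for any admissible $\alpha$. This is literally the conclusion being claimed.

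There is essentially no obstacle: the corollary is a direct specialization of Theorem~\ref{c-weakcptthm} to the vanishing-torsion case, and the only thing worth emphasizing in the write-up is that the smallness condition on $A$ is automatically satisfied in the Sasakian setting, so the bound on pseudo-Hermitian Ricci curvature from below together with the $L^p$ bound on $\tilde{R}$ is enough to obtain the stated convergence.
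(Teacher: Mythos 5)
Your proposal is correct and matches the paper's own (essentially one-line) argument: the paper simply remarks that condition (2) of Theorem~\ref{c-weakcptthm} holds automatically for Sasakian manifolds since $A \equiv 0$, and then invokes that theorem, exactly as you do.
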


As a consequence with \eqref{d-ricci-cone}, we can deduce the pointed compactness of K\"ahler cones with Ricci curvature $\mathfrak{Ric}$ lower bound.

\begin{corollary} \label{c-corollary-cone}
	Given constants $\kappa_1, d, V_1, \Lambda$ and $p > 2n +1$ for any positive integer $n$, any sequence of complete K\"ahler cones with dimension $2n+2$, $\mathfrak{Ric} \geq - \kappa_1 t^{-2}$ and their Sasakian links satisfying
	\begin{align}
	||\tilde{R}||_{L^p} \leq \Lambda, \quad \mbox{diam}_{cc} \leq d, \quad  \mbox{Vol} \geq V_1
	\end{align}
	is $C^{1, \alpha}$ sub-convergent for any $\alpha < 1- \frac{p}{2n +1} $. 
\end{corollary}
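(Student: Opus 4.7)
The plan is to reduce the corollary to the preceding Sasakian compactness corollary (the one applied just before it), using the relation \eqref{d-ricci-cone} between the Ricci curvature of the Kähler cone and the pseudo-Hermitian Ricci curvature of the link, and then lift convergence of the links to pointed convergence of the cones using the explicit formulas for $h$ and $\mathfrak{J}$ in terms of $(g_\theta, J, \theta, \xi)$.

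The first step is to convert the hypothesis $\mathfrak{Ric} \geq -\kappa_1 t^{-2}$ into a uniform lower bound on $R_*$. Restricting \eqref{d-ricci-cone} to horizontal vectors $X \in HM$ at $t=1$ gives $G_\theta((R_* - (2n+2)) X, X) = h(\mathfrak{Ric}(X), X) \geq -\kappa_1 G_\theta(X,X)$, so $R_* \geq (2n+2 - \kappa_1) G_\theta$ on the link. In particular $R_* \geq -2(n+1) \kappa_1'$ for some $\kappa_1' = \kappa_1'(n, \kappa_1)$, so the link hypothesis of the preceding corollary is in force uniformly along the sequence.

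The second step is to apply that corollary. Since the links $(M_i, HM_i, J_i, \theta_i)$ are Sasakian, the pseudo-Hermitian torsion $A_i$ vanishes identically, so the smallness assumption \eqref{c-weaktor} of Theorem \ref{c-weakcptthm} holds trivially for any $\epsilon > 0$. Together with the remaining uniform bounds $\|\tilde{R}_i\|_{L^p} \leq \Lambda$, $\mbox{diam}_{cc} \leq d$, $\mbox{Vol} \geq V_1$, this yields a $C^{1,\alpha}$ sub-convergent subsequence of links $(M_i, HM_i, J_i, \theta_i) \to (M, HM, J, \theta)$ for every admissible $\alpha$. In particular $g_{\theta_i} \to g_\theta$, $\theta_i \to \theta$, $J_i \to J$, and hence $\xi_i \to \xi$ in $C^{1,\alpha}$.

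The third step is to lift this to the cones. For each link, the cone structure $(CM_i, h_i, \mathfrak{J}_i)$ is given by the explicit formulas
\begin{equation*}
h_i = dt^2 + t^2 g_{\theta_i}, \qquad \mathfrak{J}_i = J_i + dt \otimes (t^{-1} \xi_i) - (t \theta_i) \otimes \partial_t,
\end{equation*}
on the fixed product $\mathbb{R}_+ \times M$ (after fixing diffeomorphisms identifying the links). Since $t$ and $t^{-1}$ are smooth positive functions on any compact slab $[t_1, t_2] \times M$ with $0 < t_1 < t_2 < \infty$, the $C^{1,\alpha}$ convergence of the link data transfers coefficient-wise to $C^{1,\alpha}$ convergence of $h_i$ and $\mathfrak{J}_i$ on every such slab. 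Basing the pointed convergence at a point on the slice $\{t=1\} \times M$ and exhausting $CM$ by nested slabs produces pointed $C^{1,\alpha}$ sub-convergence of the complete Kähler cones.

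The only genuine concern, rather than an obstacle, is verifying that a pointed $C^{1,\alpha}$-isomorphism class of cones is precisely captured by a $C^{1,\alpha}$-isomorphism class of links plus the universal warping data; this is immediate from the explicit formulas above, so no additional work beyond bookkeeping on compact slabs is required.
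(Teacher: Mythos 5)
Your proposal is correct and follows exactly the route the paper intends (the paper gives no explicit proof, only the remark that the corollary follows from \eqref{d-ricci-cone} together with the preceding Sasakian corollary): the cone Ricci bound translates via \eqref{d-ricci-cone} into a uniform lower bound on $R_*$ of the links, the torsion hypotheses of Theorem \ref{c-weakcptthm} are vacuous in the Sasakian case, and the convergence of the cones is read off from the explicit formulas for $h$ and $\mathfrak{J}$ on compact slabs. No gaps; the remaining steps are the same bookkeeping the paper itself omits.
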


\bibliographystyle{plain} 

\bibliography{bochner}

\begin{thebibliography}{10}

\bibitem{anderson1989ricci}
M.~Anderson.
\newblock {Ricci curvature bounds and Einstein metrics on compact manifolds}.
\newblock {\em J. Amer. Math. Soc.}, 2(3):455--490, 1989.

\bibitem{anderson1990convergence}
M.~Anderson.
\newblock Convergence and rigidity of manifolds under {R}icci curvature bounds.
\newblock {\em Invent. Math.}, 102(1):429--445, 1990.

\bibitem{anderson1992compactness}
M.~Anderson and J.~Cheeger.
\newblock ${C}^{\alpha} $-compactness for manifolds with {R}icci curvature and
  injectivity radius bounded below.
\newblock {\em J. Differential Geom.}, 35(2):265--281, 1992.

\bibitem{baudoin2014volume}
F.~Baudoin, M.~Bonnefont, N.~Garofalo, and I.H. Munive.
\newblock Volume and distance comparison theorems for sub-{R}iemannian
  manifolds.
\newblock {\em J. Funct. Anal.}, 267(7):2005--2027, 2014.

\bibitem{boyer2008sasakian}
C.~P. Boyer and K.~Galicki.
\newblock {\em ``Sasakian geometry"}.
\newblock Oxford University Press, Oxford, 2008.

\bibitem{boyer19983sasakian}
C.P. {Boyer} and K.~{Galicki}.
\newblock {3-Sasakian Manifolds}.
\newblock {\em ArXiv: hep-th/9810250}, 1998.

\bibitem{boyer2006eta}
C.P. Boyer, K.~Galicki, and P.~Matzeu.
\newblock On eta-{E}instein {S}asakian {G}eometry.
\newblock {\em Comm. Math. Phys.}, 262(1):177--208, 2006.

\bibitem{chang2017pseudo}
D.~C. Chang, S.~C. Chang, Y.~B. Han, and J.~Z. Tie.
\newblock {A CR Analogue of Yau's Conjecture on Pseduoharmonic Functions of
  Polynomial Growth}.
\newblock 2017.
\newblock submitted.

\bibitem{chang2016liyau}
D.~C. Chang, S.~C. Chang, and C.~Lin.
\newblock On {Li}-{Yau} gradient estimate for sum of squares of vector fields
  up to higher step.
\newblock {\em to appear in Commun. Anal. Geom.}

\bibitem{cheeger1970finiteness}
J.~Cheeger.
\newblock {Finiteness theorems for Riemannian manifolds}.
\newblock {\em Amer. J. Math.}, 92:61--74, 1970.

\bibitem{cheeger1997ricci}
J.~Cheeger and T.~H. Colding.
\newblock {On the structure of spaces with Ricci curvature bounded below. I.}
\newblock {\em J. Differential Geom.}, 46(3):406--480, 1997.

\bibitem{dragomir2006cr}
S.~Dragomir and G.~Tomassini.
\newblock {\em Differential geometry and analysis on CR manifolds}.
\newblock Number 246 in Progress in Mathematics. Birkh{\"a}user Boston, Inc.,
  2006.

\bibitem{folland1974estimates}
G.B. Folland and E.M. Stein.
\newblock {Estimates for the $\partial_b$ complex and analysis on the
  Heisenberg group}.
\newblock {\em Comm. Pure Appl. Math.}, 27:429--522, 1974.

\bibitem{fukaya2006metruc}
K.~Fukaya.
\newblock Metric {Riemannian} geometry.
\newblock In {\em Handbook of {Differential} {Geometry}}, volume~2, pages
  189--313. North-Holland, 2006.

\bibitem{gao1990einstein}
L.Z. Gao.
\newblock Einstein metrics.
\newblock {\em J. Differential Geom.}, 32(1):155--183, 1990.

\bibitem{gilbarg1977elliptic}
D.~Gilbarg and N.S. Trudinger.
\newblock {\em Elliptic {Partial} {Differential} {Equations} of {Second}
  {Order}}.
\newblock Classics in {Mathematics}. Springer-Verlag, Berlin Heidelberg, 2
  edition, 2001.

\bibitem{gromov181metric}
M.~Gromov.
\newblock {\em Metric {Structures} for {Riemannian} and {Non}-{Riemannian}
  {Spaces}}.
\newblock Modern {Birkhäuser} {Classics}. Birkhäuser Basel, 2007.

\bibitem{lee1988psuedo}
J.M. Lee.
\newblock Pseudo-{E}instein {S}tructures on {CR} {M}anifolds.
\newblock {\em Amer. J. Math.}, 110:157--178, 1988.

\bibitem{li2012geometric}
P.~Li.
\newblock {\em Geometric {A}nalysis}.
\newblock Number 134 in Cambridge Studies in Advanced Mathematics. Cambridge
  University Press, 2012.

\bibitem{peters1987convergence}
S.~Peters.
\newblock {Convergence of Riemannian manifolds}.
\newblock {\em Compos. Math.}, 62(1):3--16, 1987.

\bibitem{petersen2006riemannian}
P.~Petersen.
\newblock {\em {Riemannian Geometry}}, volume 171 of {\em Graduate Texts in
  Mathematics}.
\newblock Springer, 2006.

\bibitem{petersen1997relative}
P.~Petersen and G.F. Wei.
\newblock Relative volume comparison with integral curvature bounds.
\newblock {\em Geom. Funct. Anal.}, 7(6):1031--1045, 1997.

\bibitem{saloff1992elliptic}
L.~Saloff-Coste.
\newblock {Uniformly elliptic operators on Riemannian manifolds}.
\newblock 36(2):417--450, 1992.

\bibitem{strichartz1986sub}
R.S. Strichartz.
\newblock Sub-{R}iemannian geometry.
\newblock {\em J. Differential Geom.}, 24(2):221--263, 1986.

\bibitem{tanaka1975differential}
N.~Tanaka.
\newblock {\em A differential geometric study on strongly pseudo-convex
  manifolds}.
\newblock Kinokuniya, 1975.

\bibitem{tian1992compact}
G.~Tian.
\newblock Compactness theorems for {K}\"ahler-{E}instein manifolds of dimension
  3 and up.
\newblock {\em J. Differential Geom.}, 35(3):535--558, 1992.

\bibitem{villani2008optimal}
C.~Villani.
\newblock {\em {Optimal Transport: Old and New}}.
\newblock Grundlehren der mathematischen Wissenschaften. Springer Berlin
  Heidelberg, 2008.

\bibitem{webster1978pseudo}
S.~M. Webster.
\newblock Pseudo-{H}ermitian structures on a real hypersurface.
\newblock {\em J. Differential Geom}, 13:25--41, 1978.

\end{thebibliography}

Shu-Cheng Chang

\emph{Department of Mathematics and Taida Institute for Mathematical Sciences (TIMS)}

\emph{National Taiwan University}

\emph{Taipei, 10617, Taiwan}

scchang@math.ntu.edu.tw

\vspace{12 pt}

Yuxin Dong

\emph{School of Mathematical Sciences}

\emph{Fudan University}

\emph{Shanghai, 200433, P. R. China}

yxdong@fudan.edu.cn

\vspace{12 pt}

Yibin Ren

\emph{College of Mathematics, Physics and Information Engineering}

\emph{Zhejiang Normal University}

\emph{Jinhua, 321004, Zhejiang, P.R. China}

allenryb@outlook.com

\end{document}